\documentclass[11pt]{amsart}
\usepackage[centertags]{amsmath}
\usepackage{amsfonts,amssymb}
\usepackage{graphicx}
\usepackage{enumerate}

  \newtheorem{theorem}{Theorem}
  
  \newtheorem{corollary}{Corollary}
  \newtheorem{proposition}{Proposition}
  \newtheorem{lemma}{Lemma}

\DeclareMathOperator{\re}{Re}

\begin{document}

\title[On the degrees of polynomial divisors over finite fields]{On the degrees of polynomial divisors \\ over finite fields}
\author{Andreas Weingartner}
\address{ 
Department of Mathematics,
351 West University Boulevard,
 Southern Utah University,
Cedar City, Utah 84720, USA}
\email{weingartner@suu.edu}
\date{\today}

\subjclass[2010]{11T06, 11N25, 05A05}

\begin{abstract}
We show that the proportion of polynomials of degree $n$ over the finite field with $q$ elements, which have a divisor of every degree below $n$, is given by $c_q n^{-1} + O(n^{-2})$.
More generally, we give an asymptotic formula for the proportion of polynomials, whose set of degrees of divisors has no gaps of size greater than $m$. To that end, we first derive an improved estimate for the proportion of polynomials of degree $n$, all of whose non-constant divisors have degree greater than $m$.
In the limit as $q \to \infty$, these results coincide with corresponding estimates related to the cycle structure of permutations.
\end{abstract} 
\maketitle

\section{Introduction}

There are many parallels between the factorization of integers into primes and the decomposition of combinatorial structures into components. 
For an overview with examples see the surveys \cite{ABT, GRA, Rud}. In this note we want to explore a correspondence between the distribution of integer divisors and the degree distribution of polynomial divisors over finite fields. 

Let $\mathbb{F}_q$ be the finite field with $q$ elements. What proportion of polynomials of degree $n$ over $\mathbb{F}_q$ have divisors of every degree below $n$? How does this question relate to the distribution of divisors of integers?

Let $F$ be a monic polynomial of degree $n$ over $\mathbb{F}_q$ with factorization $F=\prod_{1\le i \le j} P_i$, where the $P_i$ are irreducible monic polynomials. The set of degrees of divisors of $F$ is given by
$$ A_1= A_1(F)= \Biggl\{ \sum_{1\le i \le j} \varepsilon_i \deg(P_i) : \varepsilon_i \in \{0,1\} \Biggr\} \subseteq [0,n].$$
Hence $F$ has a divisor of every degree below $n$ if and only if $A_1=[0,n]\cap \mathbb{Z}$, i.e. the set $A_1$ has no gaps of size greater than $1$.

For an integer $N$ with prime factorization $N=\prod_{1\le i \le j} p_i $,  the set
$$A_2=A_2(N) = \Biggl\{ \sum_{1\le i \le j} \varepsilon_i \log p_i : \varepsilon_i \in \{0,1\}  \Biggr\}\subseteq [0,\log N] $$
has no gaps of size greater than $1$ if and only if every interval 
of the form $[x,ex)$, $1\le x\le N$, contains a divisor of $N$, i.e. the sequence of divisors of $N$,
$1=d_1<d_2<\ldots <d_k=N$, satisfies 
\begin{equation}\label{maxratio}
\max_{1\le i<k} \frac{d_{i+1}}{d_{i}} \le e.
\end{equation}

The similarity between the sets $A_1$ and $A_2$, with the logarithms of the integers ($\log N$ and $\log p_i$) taking the roles of the degrees of the polynomials ($n$ and  $\deg(P_i)$), suggests that the study of polynomials having divisors of every degree is related to the study of integers whose sequence of divisors satisfies \eqref{maxratio}. 
Improving on earlier estimates by Tenenbaum \cite{Ten86, Ten95} and Saias \cite{Sai}, we found \cite[Corollary 1.1]{PDD} that the number of such integers $\le x$ is given by 
$$ \frac{c_0 x}{\log x}\left(1+ O\left(\frac{1}{\log x}\right)\right),$$
for some positive constant $c_0$. Here we use a similar strategy to establish the analogous result for polynomials.

\begin{theorem}\label{thm1}
The proportion of polynomials of degree $n$ over $\mathbb{F}_q$, which have a divisor of every degree below $n$, is given by
$$
\frac{c_q}{n} \left(1+ O\left(\frac{1}{n}\right)\right).
$$
The factor $c_q$ depends only on $q$ and satisfies 
$$0< c_q  = C+ O\left(q^{-2\tau}\right),$$
where $C=(1-e^{-\gamma})^{-1}=2.280291...$, $\gamma$  denotes Euler's constant and the constant
$\tau=0.205466...$ is defined in Proposition \ref{etaqm}.
\end{theorem}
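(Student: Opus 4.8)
The plan is to reduce the statement to a single convolution recursion linking the proportions $p_n$ to the quantities of Proposition~\ref{etaqm}, and then extract the asymptotics from it. First I would set up notation. Let $I_d$ denote the number of monic irreducible polynomials of degree $d$ over $\mathbb{F}_q$, and let $\mathcal{B}$ be the set of all monic polynomials $F$ over $\mathbb{F}_q$ (including $F=1$) having a divisor of every degree below $\deg F$; equivalently, writing the degrees of the irreducible factors of $F$ in nondecreasing order as $a_1\le\cdots\le a_j$, one has $a_1=1$ and $a_{i+1}\le 1+a_1+\cdots+a_i$ for $1\le i<j$. The structural heart of the argument is a bijection, to be verified in both directions: every monic polynomial $F\notin\mathcal{B}$ factors uniquely as $F=GK$ with $G\in\mathcal{B}$, $K\neq 1$, and every irreducible factor of $K$ of degree exceeding $\deg G+1$, where $G$ is obtained by collecting exactly the irreducible factors of $F$ of degree below the first gap of $A_1(F)$.

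Next I would pass to generating functions. Put $\mathcal{B}(u)=\sum_{F\in\mathcal{B}}u^{\deg F}$ and, for $m\ge 0$, let $G_m(u)=\prod_{d>m}(1-u^d)^{-I_d}$ be the generating function of the monic polynomials all of whose irreducible factors — equivalently, all of whose non-constant divisors — have degree $>m$; thus $G_0(u)=(1-qu)^{-1}$ and $[u^n]G_m(u)=q^n\eta_{q,m}(n)$, where $\eta_{q,m}(n)$ is the proportion studied in Proposition~\ref{etaqm}. The bijection above gives
$$\frac{1}{1-qu}-\mathcal{B}(u)=\sum_{F\in\mathcal{B}}u^{\deg F}\bigl(G_{\deg F+1}(u)-1\bigr),$$
which collapses to $(1-qu)^{-1}=\sum_{F\in\mathcal{B}}u^{\deg F}G_{\deg F+1}(u)$. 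Extracting the coefficient of $u^n$ and dividing by $q^n$ then yields, for every $n\ge 0$, the fundamental recursion
$$\sum_{k=0}^{n}p_k\,\eta_{q,k+1}(n-k)=1.$$

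From here I would extract the asymptotics. Since a monic polynomial of positive degree $\ell$ always has an irreducible factor of degree $\le\ell$, the term $\eta_{q,k+1}(n-k)$ vanishes whenever $1\le n-k\le k+1$, so the recursion collapses to $p_n=1-\sum_{0\le k<(n-1)/2}p_k\,\eta_{q,k+1}(n-k)$, and subtracting this relation at $n$ and at $n-1$ gives
$$p_n-p_{n-1}=\sum_{0\le k<(n-1)/2}p_k\bigl(\eta_{q,k+1}(n-1-k)-\eta_{q,k+1}(n-k)\bigr).$$
Now Proposition~\ref{etaqm} enters: since $\eta_{q,m}(j)$ has reached its limiting value $\eta_{q,m}$ once $j$ exceeds $\sum_{d\le m}dI_d\asymp q^m$, the summands above are nonzero only for $k$ in a range of the shape $\log_q n\lesssim k<n/2$, and Proposition~\ref{etaqm} controls their size there by a power-saving estimate. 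A first pass gives $p_n\asymp 1/n$; then a bootstrap — bounding the convolution tail with the error term of Proposition~\ref{etaqm} and the order of magnitude of $p_k$ already known — upgrades this to $p_n=c_q n^{-1}+O(n^{-2})$ with $c_q:=\lim_n np_n$, whose existence emerges in the process. Finally, positivity of $c_q$ and the estimate $c_q=(1-e^{-\gamma})^{-1}+O(q^{-2\tau})$ follow by carrying out the same analysis in the $q=\infty$ (permutation) model, where $I_d$ is replaced by $q^d/d$ and hence $\eta_{q,m}$ by $e^{-H_m}$ with $H_m=\sum_{d\le m}1/d$, computing the limiting constant $C=(1-e^{-\gamma})^{-1}$ there, and comparing the two recursions term by term, the discrepancy being governed by $|\eta_{q,m}-e^{-H_m}|$ and the error constant $\tau$ of Proposition~\ref{etaqm}.

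The main difficulty is this last step: converting the truncated recursion into the two-term asymptotic with error $O(n^{-2})$. It requires both a clean, uniform, power-saving form of Proposition~\ref{etaqm} — itself a genuine improvement on previously available rough-polynomial counts — and a careful inductive handling of the convolution over the delicate range $k\in[\log_q n,n/2]$, where $p_k$ and the differences of $\eta_{q,k+1}$ are simultaneously non-negligible. This is the $\mathbb{F}_q[t]$ counterpart of the Tenenbaum--Saias treatment of practical numbers and of integers satisfying~\eqref{maxratio}, and tracking the dependence on $q$ sharply enough to secure the rate $q^{-2\tau}$ in $c_q=C+O(q^{-2\tau})$ is an added source of bookkeeping.
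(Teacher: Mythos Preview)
Your bijection and the resulting convolution identity $\sum_{k=0}^n p_k\, r(n-k,k+1)=1$ are exactly Lemma~\ref{fe} at $m=1$, and your observation that $r(j,m)$ is \emph{exactly} equal to $\lambda_q(m)$ once $j\ge \sum_{d\le m} dI_d$ is correct and pleasant (it falls out of $G_m(u)=(1-qu)^{-1}\prod_{d\le m}(1-u^d)^{I_d}$, the numerator having that degree). So the set-up matches the paper.

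The gaps are in what comes next. First, you have misread Proposition~\ref{etaqm}: the quantity $\eta_q(m)$ there is not the rough-polynomial proportion (your $\eta_{q,m}(n)$, the paper's $r(n,m)$) but a constant defined inside the proof of Theorem~\ref{thm3}, and Proposition~\ref{etaqm} is deduced \emph{from} Theorem~\ref{thm3}, not used as input to it. The estimates for $r(n,m)$ you actually need are those of Theorem~\ref{thm2}, and they do not directly give you power-saving control of the differences $r(n-1-k,k+1)-r(n-k,k+1)$ over the range $\log_q n\lesssim k<n/2$ that your differencing scheme requires. Second, the step you yourself flag as the main difficulty is where the paper does the real work, and it does not proceed by subtracting the recursion at $n$ and $n-1$. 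Instead it combines Lemma~\ref{fe2} with its limiting version (Lemma~\ref{se}) to obtain an approximate integral equation for $f(x,m)$ (Lemma~\ref{inteq}), changes variables to make it a convolution, takes Laplace transforms, and identifies the main term with that of $d(u)$ from~\eqref{dinteq}; three passes of feeding the resulting bound back into the error term~\eqref{error} are needed to reach $O(n^{-2})$. The constant estimate $c_q=C+O(q^{-2\tau})$ is then obtained not by comparing recursions termwise, but by rerunning the whole Laplace-transform argument for permutations to get Theorem~\ref{thm5} with constant exactly~$1$, then comparing $f(n,m)$ with $g(n,m)$ via a dyadic induction in $n/m$ (Proposition~\ref{fandg}); the exponent $\tau=1/(4+2\kappa)$ emerges only when one optimises a free parameter at the very end.
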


In Theorem \ref{thm1} and below, the implied constants in the error terms are absolute. 
In particular, all of our estimates are valid uniformly in $q$.

The method in \cite{PDD} relies on a sharp estimate by Tenenbaum \cite[Corollary 7.6, Section III.6]{Ten} for the number of integers $\le x$ which are free of prime divisors $\le y$,  for the entire range of $y$-values $2\le y \le x$.
The corresponding results for polynomials, which are available in the literature, are not precise enough for our method to succeed. 
Thus we first derive an improved estimate for $r(n,m)$, the proportion of polynomials of degree $n$ over $\mathbb{F}_q$, all of whose non-constant divisors have degree $>m$.  Such polynomials are sometimes referred to as $m$-rough, and estimates of $r(n,m)$ play a role in the analysis of factorization algorithms \cite{GP, PR}. Hence Theorem \ref{thm2} may be of independent interest.

We will need the following notation. 
Buchstab's function $\omega(u)$ is the 
unique continuous solution to the equation
\begin{equation*}
(u\omega(u))' = \omega(u-1) \qquad (u>2)
\end{equation*}
with initial condition
$\omega(u)=1/u$ for $1\le u \le 2$. Let $\omega(u)=0$ for $u<1$.
The number of monic irreducible polynomials of degree $n$ over $\mathbb{F}_q$ is given by \cite[Theorem 3.25]{LN} 
\begin{equation*}
 I_n=\frac{1}{n} \sum_{d|n} \mu \left(\frac{n}{d}\right) q^d  \qquad (n\ge 1),
\end{equation*}
where $\mu$ is the M\"obius function. An easy exercise \cite[p. 142, Ex. 3.26 and Ex. 3.27]{LN} shows that
\begin{equation}\label{In}
 \frac{q^n}{n} - \frac{2q^{n/2}}{n}< I_n \le \frac{q^n}{n} \qquad (n\ge 1) . 
\end{equation}
Define
$$\lambda_q(m) := \prod_{k=1}^m \left(1-\frac{1}{q^k}\right)^{I_k} .$$

\begin{theorem}\label{thm2}
Let $u=n/m$. For $n>m\ge 1$ we have
$$ r(n,m)=\lambda_q(m) \, e^\gamma \, \omega(u) \left(1+ O\left(\frac{(u/e)^{-u}}{m}\right)\right),$$
and
$$ \lambda_q(m)=e^{-H_m}  \left(1+O\left(\frac{1}{m q^{(m+1)/2}}\right)\right)
= \frac{e^{-\gamma}}{m} \left(1+O\left(\frac{1}{m}\right)\right),$$
where $\displaystyle H_m:=\sum_{1\le k \le m} \frac{1}{m}$.
\end{theorem}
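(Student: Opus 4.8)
My plan is to establish the formula for $\lambda_q(m)$ first, both because it is self-contained and because it is needed in the error analysis for $r(n,m)$. Taking logarithms, expanding $\log(1-q^{-k})=-\sum_{j\ge1}q^{-jk}/j$, and reorganising the resulting double sum according to $n=jk$, I would obtain
$$\log\lambda_q(m)=-\sum_{k=1}^{m}\sum_{j\ge1}\frac{I_kq^{-jk}}{j}=-\sum_{n\ge1}\frac{q^{-n}}{n}\sum_{\substack{k\mid n\\ k\le m}}kI_k .$$
I would then use the classical identity $\sum_{k\mid n}kI_k=q^n$ (equivalently, $x^{q^n}-x$ is the product of all monic irreducibles of degree dividing $n$; cf.\ \cite{LN}). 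For $n\le m$ every divisor of $n$ is at most $m$, so the inner sum equals $q^n$ and those terms contribute exactly $-\sum_{n\le m}1/n=-H_m$. Hence $\log\lambda_q(m)=-H_m-T_q(m)$ with
$$T_q(m):=\sum_{n>m}\frac{q^{-n}}{n}\sum_{\substack{k\mid n\\ k\le m}}kI_k\ \ge\ 0 .$$
For $n>m$ each such $k$ is a proper divisor of $n$, so $k\le n/2$, and $kI_k\le q^k$ from \eqref{In} gives $\sum_{k\mid n,\,k\le m}kI_k\le\sum_{k\le n/2}q^k<2q^{n/2}$; therefore $T_q(m)<\tfrac{2}{m+1}\sum_{n>m}q^{-n/2}=O\!\big(m^{-1}q^{-(m+1)/2}\big)$ with an absolute implied constant. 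Exponentiating gives the first displayed form of the $\lambda_q(m)$ estimate, and substituting $H_m=\log m+\gamma+O(1/m)$ gives the second.

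For $r(n,m)$ I would start from a Buchstab-type recursion. Writing $\Phi(n,m)=q^nr(n,m)$ for the number of monic $m$-rough polynomials of degree $n$, the generating function $f_m(t)=\sum_{n\ge0}\Phi(n,m)t^n=\prod_{k>m}(1-t^k)^{-I_k}$ satisfies $tf_m'(t)=f_m(t)\sum_{k>m}\sum_{j\ge1}kI_kt^{jk}$, and comparing coefficients of $t^n$ yields
$$n\,r(n,m)=\sum_{\substack{k>m,\ j\ge1\\ jk\le n}}\frac{kI_k}{q^{jk}}\,r(n-jk,m)\qquad(n\ge1),$$
which determines $r(n,m)$ from $r(\ell,m)$ with $\ell<n$. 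I would then prove the asymptotic formula by strong induction on $n$ for fixed $m$ (equivalently, by induction along $u=n/m$ in unit steps). The base case is $m<n\le2m$: an $m$-rough polynomial here cannot have two irreducible factors of degree $>m$, hence is irreducible, so $r(n,m)=I_n/q^n=\tfrac1n\big(1+O(q^{-(m+1)/2})\big)$ by \eqref{In}; since $\omega(u)=1/u$ on $[1,2]$ and $\lambda_q(m)e^{\gamma}m=1+O(1/m)$ by the first part, this equals $\lambda_q(m)e^{\gamma}\omega(u)\big(1+O(1/m)\big)$, which is the claim because $(u/e)^{-u}$ is bounded above and below on $[1,2]$.

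For the inductive step I would split the recursion into four pieces: the terms with $n-jk=0$ (i.e.\ $k\mid n$, $k>m$) give $q^{-n}\sum_{k\mid n,\,k>m}kI_k=1-q^{-n}\sum_{k\mid n,\,k\le m}kI_k=1+O(q^{-n/2})$ since $r(0,m)=1$; the terms with $1\le n-jk\le m$ vanish because $r(\ell,m)=0$ there; the terms with $j\ge2$ are $O(q^{-m})$ because $kI_kq^{-jk}\le q^{-k(j-1)}$ and $r\le1$; and in the main terms ($j=1$, $n-k\ge m+1$) one writes $kI_kq^{-k}=1+O(q^{-k/2})$ and, by the inductive hypothesis, approximates $\sum_{m<k\le n-m}r(n-k,m)\approx\lambda_q(m)e^{\gamma}m\int_0^{u-1}\omega(t)\,dt$ (a Riemann sum, with error controlled by the Lipschitz bound on $\omega$), invoking also the defining identity $u\omega(u)=1+\int_0^{u-1}\omega(t)\,dt$. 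Assembling the pieces, the leading ``$1$'' from the first piece is exactly absorbed by the ``$1$'' produced by this identity, because $\lambda_q(m)e^{\gamma}m=1+O(1/m)$ — which is precisely why the $\lambda_q(m)$ estimate has to be proved to that accuracy first — and what remains is $r(n,m)=\lambda_q(m)e^{\gamma}\omega(u)+(\text{error})$.

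The routine steps are the generating-function identities and the crude bounds on $T_q(m)$ and on the $j\ge2$ terms. The main obstacle is the error bookkeeping in the induction: one must choose the right inductive quantity (a suitably normalised $r(n,m)-\lambda_q(m)e^{\gamma}\omega(u)$) so that the accumulated error stays $O\!\big(\lambda_q(m)e^{\gamma}\omega(u)\,(u/e)^{-u}/m\big)$ uniformly in $u$, $m$ and $q$. This is where the Riemann-sum error, the kink of $\omega$ at $u=2$, and the slow convergence $\omega(u)\to e^{-\gamma}$ all need careful handling — most plausibly by treating a bounded range of $u$ directly and running a separate tail argument for large $u$, where the content reduces to comparing $r(n,m)$ with $\lambda_q(m)$ and $\omega(u)$ with $e^{-\gamma}$. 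This parallels Tenenbaum's analysis of the analogous integer quantity $\Phi(x,y)$ in \cite[Section III.6]{Ten}, transplanted to $\mathbb{F}_q[x]$.
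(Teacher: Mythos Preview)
Your treatment of $\lambda_q(m)$ is correct and in fact cleaner than the paper's. The paper obtains the estimate only indirectly: it first shows $r(n,m)\to\lambda_q(m)$ and $p(n,m)\to e^{-H_m}$ as $n\to\infty$ (Propositions~\ref{rsm} and~\ref{psm}), separately proves $r(n,m)=p(n,m)+O(n^{-1}q^{-n/2}+m^{-2}q^{-(m+1)/2})$ by an induction comparing two recursions (Proposition~\ref{rap}), and then lets $n\to\infty$. Your direct computation via $\sum_{k\mid n}kI_k=q^n$ bypasses all of this.

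For $r(n,m)$, however, your route diverges from the paper's and the gap you flag is genuine. The paper does \emph{not} run a Buchstab induction to compare $r(n,m)$ with $\lambda_q(m)e^{\gamma}\omega(u)$. Instead it splits on the size of $m$. For $m\le n/\log n$ it applies Cauchy's theorem to the generating function $F_m(z)=\prod_{k>m}(1-z^k)^{-I_k}$: shifting the contour past $z=1/q$ extracts the residue $\lambda_q(m)$, and an optimised choice of radius bounds the remaining integral by $(u/e)^{-u}/m$ directly, with no recursion and no Riemann-sum error (Proposition~\ref{rsm}). For $n/\log n<m<n$ it does derive your recursion (Lemma~\ref{fer}), but only to compare $r(n,m)$ with the permutation quantity $p(n,m)$, which satisfies the \emph{exact} identity $n\,p(n,m)=1+\sum_{m<k<n-m}p(k,m)$ (Lemma~\ref{fep}); subtracting the two and inducting on $\lfloor n/m\rfloor$ gives $|r-p|\ll n^{-1}q^{-n/2}+m^{-2}q^{-(m+1)/2}$ with no continuous approximation anywhere. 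The asymptotic $p(n,m)=e^{\gamma-H_m}\omega(u)(1+O(u^{-u}/m))$ in this range is then \emph{imported} from Manstavi\v{c}ius--Petuchovas \cite{Man}.

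So the paper purchases the delicate uniform error control you describe by citing an external result on permutations, whereas your plan would have to carry it out from scratch. That is not impossible---it is the shape of the classical argument for $\Phi(x,y)$ over~$\mathbb{Z}$---but the step you leave open (choosing the inductive quantity so that the $O(1/m)$ mismatch between $1$ and $\lambda_q(m)e^{\gamma}m$, and the Riemann-sum error, do not accumulate past $(u/e)^{-u}/m$ uniformly in $q$, $m$, $u$) is exactly the hard part, and your ``separate tail argument for large $u$'' is not specified. As written, the $r(n,m)$ portion is an outline rather than a proof.
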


With the second estimate for $\lambda_q(m)$, Theorem \ref{thm2} simplifies to 
\begin{corollary}\label{cora}
For $n>m\ge 1$ we have
$$r(n,m)= \frac{\omega(u)}{m} \left(1+O\left(\frac{1}{m}\right)\right).$$
\end{corollary}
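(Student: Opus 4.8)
The plan is to obtain Corollary \ref{cora} directly from Theorem \ref{thm2} by inserting the cruder of the two estimates for $\lambda_q(m)$ into the main formula for $r(n,m)$. Concretely, I would start from
$$r(n,m) = \lambda_q(m)\, e^{\gamma}\, \omega(u)\left(1 + O\left(\frac{(u/e)^{-u}}{m}\right)\right)$$
and substitute $\lambda_q(m) = \frac{e^{-\gamma}}{m}\bigl(1 + O(1/m)\bigr)$. The factors $e^{\gamma}$ and $e^{-\gamma}$ cancel, leaving $r(n,m) = \frac{\omega(u)}{m}\bigl(1 + O(1/m)\bigr)\bigl(1 + O((u/e)^{-u}/m)\bigr)$, and it remains only to simplify the product of error factors.

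The single point deserving a word of justification is that $(u/e)^{-u} = (e/u)^u$ is bounded, so that $O\bigl((u/e)^{-u}/m\bigr) = O(1/m)$. Since $u = n/m > 1$ in the range under consideration, it suffices to note that $g(u) := u - u\log u$ has derivative $g'(u) = -\log u$, hence increases on $(0,1)$ and decreases on $(1,\infty)$ with maximal value $g(1) = 1$; therefore $(e/u)^u = e^{g(u)} \le e$ for all $u > 0$. Multiplying out then gives $\bigl(1 + O(1/m)\bigr)\bigl(1 + O(1/m)\bigr) = 1 + O(1/m)$, which is the claimed formula.

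I do not expect any real obstacle here: Corollary \ref{cora} is simply Theorem \ref{thm2} repackaged with the simpler asymptotic for $\lambda_q(m)$, the only extra ingredient being the elementary uniform bound $(e/u)^u \le e$. For orientation one may also recall that $0 < \omega(u) \le 1$ for $u \ge 1$, so that the main term $\omega(u)/m$ is honestly of order $1/m$ up to the bounded factor $\omega(u)$; but this observation is not needed for the argument.
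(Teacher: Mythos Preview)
Your proposal is correct and matches the paper's own derivation exactly: the paper states that Corollary~\ref{cora} follows from Theorem~\ref{thm2} by inserting the second estimate $\lambda_q(m)=e^{-\gamma}m^{-1}(1+O(1/m))$, and your write-up supplies precisely this substitution together with the elementary bound $(e/u)^u\le e$ needed to absorb the $(u/e)^{-u}/m$ error into $O(1/m)$.
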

Warlimont \cite[Eqs. (3) and (4)]{War} showed that Corollary \ref{cora} and the second estimate for $\lambda_q(m)$ hold in the more general setting of arithmetical semigroups.

Inserting the estimate for $\omega(u)$ from Lemma \ref{lemom}
into Theorem \ref{thm2}, we obtain the following improvement of \cite[Theorem 3.1]{GP} and \cite[Theorem 3.1]{PR}.
\begin{corollary}\label{corb}
For $n>m\ge 1$ we have
$$r(n,m)=\lambda_q(m)  \Bigl(1+O \bigl((u/e)^{-u}\bigr)\Bigr) .$$
\end{corollary}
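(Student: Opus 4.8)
The plan is to feed the asymptotic behaviour of Buchstab's function into the first formula of Theorem \ref{thm2}. The key input is that $\omega(u)$ approaches its limiting value $e^{-\gamma}$ very rapidly: Lemma \ref{lemom} provides an estimate of the shape $\omega(u) = e^{-\gamma} + O\bigl((u/e)^{-u}\bigr)$ for $u\ge 1$. (If the lemma is instead phrased with a bound $O\bigl(1/\Gamma(u+1)\bigr)$, one converts via Stirling's formula, since $\Gamma(u+1) \gg \sqrt{u}\,(u/e)^u \gg (u/e)^u$.) Multiplying this by the absolute constant $e^{\gamma}$ gives $e^{\gamma}\omega(u) = 1 + O\bigl((u/e)^{-u}\bigr)$.

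Substituting into the first display of Theorem \ref{thm2} yields
$$ r(n,m) = \lambda_q(m)\Bigl(1 + O\bigl((u/e)^{-u}\bigr)\Bigr)\left(1 + O\!\left(\frac{(u/e)^{-u}}{m}\right)\right).$$
It remains to collapse the product of the two error factors. Since $m\ge 1$ we have $(u/e)^{-u}/m \le (u/e)^{-u}$, so the second factor is itself $1 + O\bigl((u/e)^{-u}\bigr)$, and expanding the product produces a cross term of size $O\bigl((u/e)^{-2u}\bigr)$. Here one uses the elementary fact that $(u/e)^{-u} = (e/u)^u \le e$ for every $u\ge 1$ (the function $u\mapsto u(1-\log u)$ is decreasing for $u\ge 1$), whence $(u/e)^{-2u} \le e\,(u/e)^{-u} \ll (u/e)^{-u}$. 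The cross term is therefore absorbed, and we obtain $r(n,m) = \lambda_q(m)\bigl(1 + O((u/e)^{-u})\bigr)$, as claimed.

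There is no genuine obstacle here; the single point deserving a word is that $(u/e)^{-u}$ is \emph{not} small when $u$ is close to $1$ — indeed it exceeds $1$ for $1 \le u < e$ — so one must not pretend $\omega(u)$ is close to $e^{-\gamma}$ in that regime. This causes no trouble: the argument above is uniform in $u\ge 1$, and in the range $1\le u<e$ the asserted error term is $\gg 1$, while Theorem \ref{thm2} together with the crude bound $\omega(u)\le 1$ already gives $r(n,m)\ll \lambda_q(m)$ there, so the corollary holds with a suitably large absolute implied constant. The only real bookkeeping is to note that every implied constant along the way is absolute, hence the conclusion is uniform in $q$.
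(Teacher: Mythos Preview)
Your proof is correct and follows exactly the approach the paper takes: the paper's entire argument is the single sentence ``Inserting the estimate for $\omega(u)$ from Lemma \ref{lemom} into Theorem \ref{thm2}, we obtain the following,'' and you have simply spelled out the bookkeeping. Your care with the cross term and with the range $1\le u<e$ (where the error bound is $\gg 1$ and the statement is vacuous) is more detail than the paper provides, but it is all sound.
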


When $m\ge 3$, we can replace $(u/e)^{-u}$ by $u^{-u}$ in the error terms of Theorem \ref{thm2} and Corollary \ref{corb}.

For $m \le n/\log n$, we derive Theorem \ref{thm2} by applying the residue theorem to the generating function of $r(n,m)$. 
When $m>n/\log n$, we show that $r(n,m)$ is very close to $p(n,m)$, the proportion of permutations of $n$ objects which 
have no cycles of length $\le m$. The result then follows from a recent estimate of $p(n,m)$ due to Manstavi\v{c}ius and Petuchovas \cite{Man}.  

With very little extra effort, we can generalize Theorem \ref{thm1} as follows.
Let $f(n,m)=f_q(n,m)$ be the proportion of monic polynomials $F$ of degree $n$ over  $\mathbb{F}_q$, with the property that the set of degrees of divisors of $F$ (i.e. the set $A_1$) has no gaps of size greater than $m$.  
The original question corresponds to $m=1$. The corresponding generalization in the case of the divisors of integers would be to replace the upper bound $e$ in \eqref{maxratio} by some parameter $t$. The asymptotic behavior of the number of integers up to $x$, all of whose ratios of consecutive divisors are at most $t$, is described in \cite[Theorem 1.3]{PDD}
in terms of a function $d(u)$, which is defined as follows. 

Let $d(u)=0$ for $u<0$ and 
\begin{equation}\label{dinteq}
d(u)= 1-\int_0^{\frac{u-1}{2}} \frac{d(v)}{v+1} \ \omega\left(\frac{u-v}{v+1}\right) \mathrm{d} v \qquad (u\ge  0),
\end{equation}
where $\omega(u)$ is Buchstab's function.
In \cite[Theorem 1]{IDD3}, we used equation \eqref{dinteq} to show that
\begin{equation}\label{IDD3T1}
d(u)=\frac{C}{u+1}\, \Bigl(1+ O\left(u^{-2}\right)\Bigr) \qquad (u \ge 1),
\end{equation}
where  $C=(1-e^{-\gamma})^{-1}=2.280291...$,  as in Theorem \ref{thm1}.

\begin{figure}[htb]\label{fig1}
\begin{center}
\includegraphics[height=5.2cm,width=11cm]{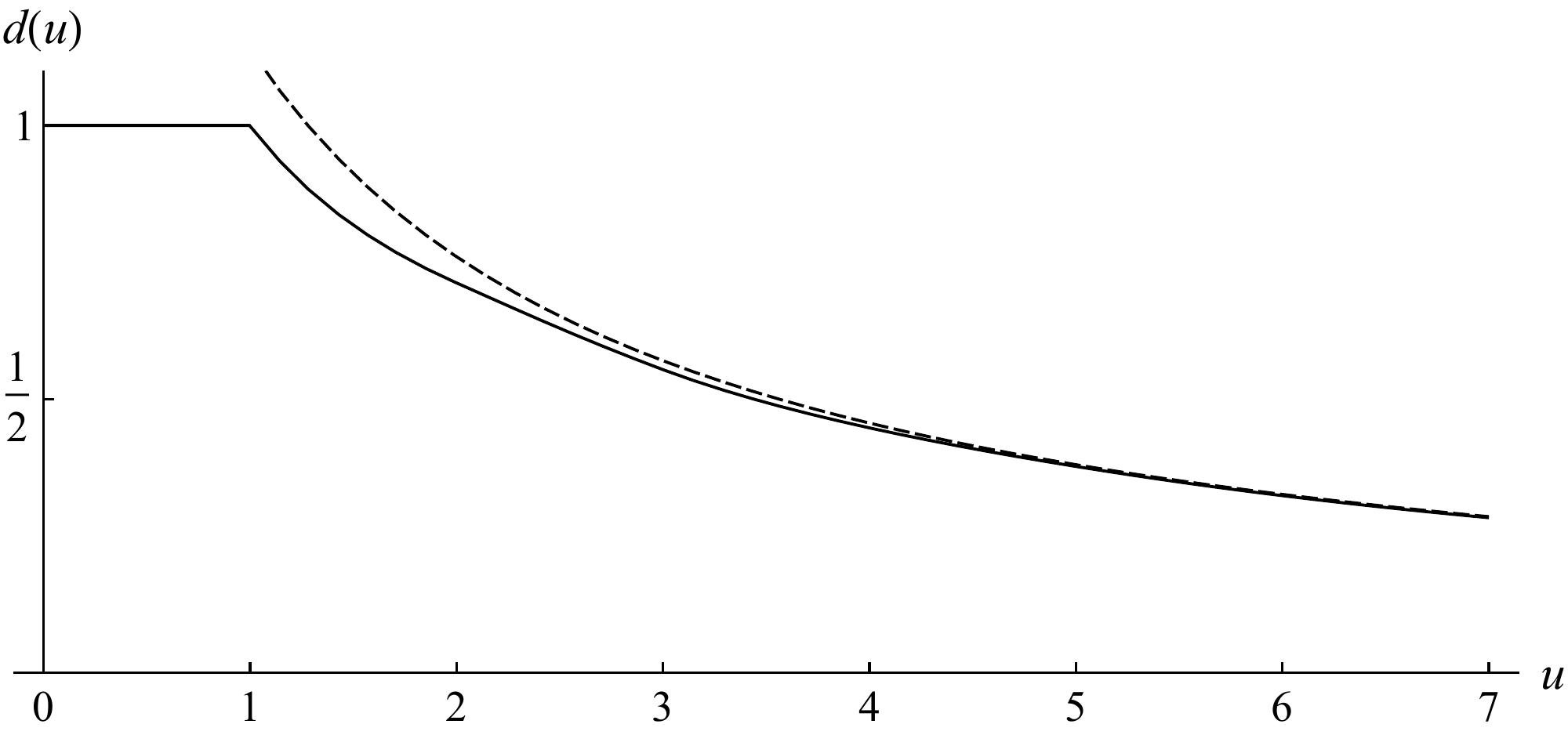}
\caption{The graphs of $d(u)$ (solid) and $\frac{C}{u+1}$ (dashed).}
\end{center}
\end{figure}

The following result is the polynomial analogue of \cite[Theorem 1.3]{PDD}.
\begin{theorem}\label{thm3}
For $n\ge 0$, $m\ge 1$ we have
$$ f(n,m)= \eta_q(m) \, d(n/m) \left( 1+O\left(\frac{1}{n+m}\right)\right) .$$
The factor $\eta_q(m)$ depends only on $q$ and $m$, and satisfies
$$ 0<  \eta_q(m) = 1 + O\left(\frac{1}{m q^{(m+1)\tau}}\right),$$ 
where $\tau =0.205466...$ is defined in Proposition \ref{etaqm}.
\end{theorem}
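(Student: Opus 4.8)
The plan is to follow the strategy already used for Theorem \ref{thm1} (the case $m=1$), generalizing it to arbitrary gap size $m$. The starting point is a functional/recursive description of $f(n,m)$ obtained by conditioning on the structure of the factorization. A monic polynomial $F$ of degree $n$ whose divisor-degree set $A_1$ has no gap larger than $m$ can be decomposed by looking at the ``initial block'': write $F = G H$ where $G$ collects the irreducible factors contributing to the degrees at the bottom end, so that $G$ is itself $m$-rough-free in the appropriate sense and $H$ is $j$-rough for the appropriate cutoff $j$. This should yield an identity expressing $f(n,m)$ as a sum over such decompositions, with the proportion of $m$-rough polynomials $r(n,m)$ from Theorem \ref{thm2} appearing as the building block — exactly mirroring how the integer identity behind \eqref{dinteq} expresses the $t$-dense-divisor density via Buchstab's $\omega$. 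Summing the identity should produce a discrete analogue of the integral equation \eqref{dinteq} for a normalized version of $f(n,m)$, with $\eta_q(m)$ emerging as the product of local correction factors $\lambda_q$-type quantities that measure the discrepancy between the polynomial count and the continuous model; this is where Proposition \ref{etaqm} (which defines $\tau$) gets invoked to control $\eta_q(m) = 1 + O(1/(m q^{(m+1)\tau}))$ and to confirm $\eta_q(m)>0$.

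Next I would convert this recursion into an asymptotic statement. Setting $u = n/m$, I expect the normalized quantity $f(n,m)/\eta_q(m)$ to satisfy, up to an error of relative size $O(1/(n+m))$, the same integral equation \eqref{dinteq} that defines $d(u)$, with the sums over degrees approximating the integrals over $v$ with step size $1/m$. The two regimes must be handled separately, just as in Theorem \ref{thm2}: for $m$ small relative to $n$ (say $m \le n/\log n$), one uses the generating-function/residue-theorem approach, feeding in the sharp estimate for $r(n,m)$ from Theorem \ref{thm2}; for $m$ large, one compares $f(n,m)$ directly with the corresponding permutation quantity (the proportion of permutations of $n$ objects whose cycle-length set has no gap $>m$), whose asymptotics are governed by $d(u)$ via the Manstavi\v{c}ius--Petuchovas type estimates, and the limit $q\to\infty$ makes $\eta_q(m) \to 1$. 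Stitching the two ranges together, one obtains $f(n,m) = \eta_q(m)\, d(n/m)(1 + O(1/(n+m)))$ uniformly.

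Two technical points deserve care. First, the error term in Theorem \ref{thm2} for $r(n,m)$ is of relative size $O((u/e)^{-u}/m)$, which decays rapidly in $u$; when these are summed against the weights appearing in the recursion for $f(n,m)$, one must check that the accumulated error is genuinely $O(1/(n+m))$ and not merely $O(1/m)$ — this requires exploiting the superexponential decay of $\omega(u)$ (Lemma \ref{lemom}) to absorb the tail, and using the bound \eqref{IDD3T1} for $d(u)$ to normalize correctly near $u=0$ where $d(u)=1$. Second, the definition and positivity of $\eta_q(m)$ must be pinned down precisely; it should be an absolutely convergent infinite product (over blocks beyond degree $m$) of factors each of the form $1 + O(q^{-(\cdot)\tau}/(\cdot))$, and one must verify the product converges and stays bounded away from $0$ uniformly in $q$ and $m$ — this is the content one extracts from Proposition \ref{etaqm}.

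The main obstacle I anticipate is the matching of error terms across the two regimes at the crossover $m \asymp n/\log n$: the residue-theorem estimate naturally produces errors phrased in terms of $u^{-u}$ and $1/m$, while the permutation-comparison produces errors phrased in terms of $1/n$ and the closeness of $r(n,m)$ to $p(n,m)$, and reconciling these into the single clean bound $O(1/(n+m))$ — uniformly in $q$ — is the delicate part. Everything else is, in principle, a careful but routine adaptation of the $m=1$ argument already carried out for Theorem \ref{thm1} together with the integer analogue in \cite{PDD}.
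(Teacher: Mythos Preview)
Your starting point is correct: the recursion $1=\sum_{0\le k\le n} f(k,m)\,r(n-k,k+m)$ (Lemma~\ref{fe}) is exactly the Buchstab-type identity one needs, and passing from it to an integral equation of the shape of~\eqref{dinteq} is the right idea. But from there on your plan drifts in two substantive ways.

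First, your description of $\eta_q(m)$ as ``an absolutely convergent infinite product of factors each of the form $1+O(q^{-(\cdot)\tau}/(\cdot))$'' is not what happens. In the paper $\eta_q(m)$ is \emph{not} a product; it is defined as $\alpha(m)+\beta(m)$, where $\alpha(m)=e^{-\gamma}\int_0^\infty f(t,m)/(t+m)\,\mathrm{d}t$ and $\beta(m)=-\int_0^\infty E_m(y)\,\mathrm{d}y$ is a correction integral built from the error in the integral equation. You cannot isolate $\eta_q(m)$ before solving for $f$; it emerges only after the analysis. Consequently your proposed proof of $\eta_q(m)>0$ via product convergence has no object to act on. The paper instead proves positivity by a short combinatorial comparison (Lemma~\ref{lb}): if $\eta_q(m)=0$ but $\eta_q(m+1)>0$, then $f(n,m)\ll n^{-2}$ while $f(n,m+1)\asymp n^{-1}$, contradicting $f(n,m+1)\le q\,f(n+1,m)$.

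Second, you propose to split into the two regimes $m\le n/\log n$ and $m>n/\log n$, handling the first by residues and the second by comparison with permutations. That is the architecture of the proof of Theorem~\ref{thm2} (for $r(n,m)$), not of Theorem~\ref{thm3}. Theorem~\ref{thm3} uses Theorem~\ref{thm2} only as an input to approximate $r(n-k,k+m)$ inside the recursion; there is no regime split in $n$ versus $m$ in the main argument. The actual analytic engine you are missing is a Laplace-transform step: after the change of variables $e^z=x/m+1$, one sets $G_m(z)=f(m(e^z-1),m)e^z$, takes Laplace transforms of the integral equation, and compares with the transform of $G(z)=e^z d(e^z-1)$. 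Inversion gives a representation of $G_m$ in terms of $G$ plus a convolution with the error $E_m$; a three-step bootstrap ($G_m\le e^z\Rightarrow E_m\ll 1\Rightarrow G_m\ll 1+z\Rightarrow E_m\ll(1+z)e^{-z}\Rightarrow G_m\ll 1\Rightarrow E_m\ll 1/(me^z)$) then forces the clean $O(m/(x+m)^2)$ error. Without this mechanism your proposal has no way to upgrade the raw recursion into the uniform bound $O(1/(n+m))$, and your worry about ``matching error terms across the two regimes'' is aimed at a difficulty that does not arise. Finally, note that Theorem~\ref{thm1} is a \emph{corollary} of Theorem~\ref{thm3} (with $m=1$), so there is no prior ``$m=1$ argument'' to generalize.
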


Depending on the size of $m$ relative to $n$, we can simplify Theorem \ref{thm3} in different ways. 
With the estimate \eqref{IDD3T1}, we get 
\begin{corollary}\label{cor1}
For $n\ge m\ge 1$ we have
$$f(n,m)=  \frac{c_q(m) m}{n+m} \left( 1+O\left(\frac{m^2}{n^2}+\frac{1}{n}\right)\right),$$
where 
$$ 0< c_q(m):= C \eta_q(m)=C + O\left(\frac{1}{m q^{ (m+1)\tau }}\right).$$ 
\end{corollary}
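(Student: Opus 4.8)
\textbf{Proof proposal for Corollary \ref{cor1}.}
The plan is to combine Theorem \ref{thm3} with the asymptotic \eqref{IDD3T1} for $d(u)$ and then bookkeep the error terms. Since $n\ge m\ge 1$, the ratio $u=n/m$ satisfies $u\ge 1$, so \eqref{IDD3T1} applies and gives
$$ d(n/m)=\frac{C}{n/m+1}\Bigl(1+O\bigl((m/n)^2\bigr)\Bigr)=\frac{Cm}{n+m}\left(1+O\left(\frac{m^2}{n^2}\right)\right).$$
Substituting this into the formula $f(n,m)=\eta_q(m)\,d(n/m)\bigl(1+O(1/(n+m))\bigr)$ from Theorem \ref{thm3} yields
$$ f(n,m)=\frac{C\eta_q(m)\,m}{n+m}\left(1+O\left(\frac{m^2}{n^2}\right)\right)\left(1+O\left(\frac{1}{n+m}\right)\right).$$

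Next I would set $c_q(m):=C\eta_q(m)$ and absorb the two error factors into one. Because $n\ge m\ge 1$ we have $1/(n+m)\le 1/n$, and on expanding the product the cross term $m^2/(n^2(n+m))$ is bounded by $m^2/n^2$; hence the product of the two bracketed factors equals $1+O(m^2/n^2+1/n)$, which gives the stated formula for $f(n,m)$. For the size of $c_q(m)$, Theorem \ref{thm3} gives $\eta_q(m)>0$ and $\eta_q(m)=1+O\bigl(1/(m q^{(m+1)\tau})\bigr)$; multiplying by the fixed numerical constant $C$ preserves both statements, so $c_q(m)>0$ and $c_q(m)=C+O\bigl(1/(m q^{(m+1)\tau})\bigr)$, as claimed.

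There is essentially no real obstacle here; the only points requiring a little care are that the corollary is asserted for the full range $n\ge m\ge 1$, including the degenerate case $n=m$ (where $u=1$ sits exactly at the left endpoint of the range of validity of \eqref{IDD3T1}, and where the claimed error term $O(m^2/n^2+1/n)=O(1)$ is vacuous but still correct), and that the implied constants remain absolute and independent of $q$. The latter is automatic: the implied constant in \eqref{IDD3T1} is absolute, the one in Theorem \ref{thm3} is absolute by the conventions stated after Theorem \ref{thm1}, and $C$ is a fixed constant.
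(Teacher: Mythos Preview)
Your proposal is correct and follows exactly the route the paper intends: the paper simply states that Corollary~\ref{cor1} follows from Theorem~\ref{thm3} together with the estimate~\eqref{IDD3T1}, and your argument spells out precisely this substitution and the accompanying error bookkeeping.
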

Theorem \ref{thm1} follows from Corollary \ref{cor1} with $m=1$ and $c_q=c_q(1)$.
Corollary \ref{cor1} clearly implies
\begin{corollary}\label{cor2}
For $n\ge m\ge 1$ we have
$$f(n,m)= \frac{C m}{n+m} \left(1+O\left(\frac{m^2}{n^2}+\frac{1}{n}+\frac{1}{m q^{(m+1)\tau}}\right)\right).$$
\end{corollary}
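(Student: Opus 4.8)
The plan is to obtain Corollary~\ref{cor2} as an immediate consequence of Corollary~\ref{cor1}, which already contains all of the analytic content. Recall that Corollary~\ref{cor1} asserts, for $n\ge m\ge 1$,
$$
f(n,m)=\frac{c_q(m)\,m}{n+m}\left(1+O\left(\frac{m^2}{n^2}+\frac{1}{n}\right)\right),
\qquad c_q(m)=C\,\eta_q(m),
$$
where $C=(1-e^{-\gamma})^{-1}$ is a fixed positive absolute constant and, by Theorem~\ref{thm3} (with $\tau$ from Proposition~\ref{etaqm}), $\eta_q(m)=1+O\bigl(1/(m q^{(m+1)\tau})\bigr)$. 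Consequently $c_q(m)=C\bigl(1+O(1/(m q^{(m+1)\tau}))\bigr)$, the implied constant remaining absolute since we are merely dividing a fixed quantity by the positive constant $C$.

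Inserting this into the displayed formula for $f(n,m)$ yields
$$
f(n,m)=\frac{C m}{n+m}\left(1+O\left(\frac{1}{m q^{(m+1)\tau}}\right)\right)\left(1+O\left(\frac{m^2}{n^2}+\frac{1}{n}\right)\right),
$$
so it only remains to multiply out the two parenthetical factors. Under the hypotheses $n\ge m\ge 1$ and $q\ge 2$, each of the three quantities $m^2/n^2$, $1/n$ and $1/(m q^{(m+1)\tau})$ is at most $1$, hence bounded, so the trivial estimate $(1+O(a))(1+O(b))=1+O(a+b)$ for $0\le a,b=O(1)$ applies and the cross term is absorbed into the sum. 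This produces
$$
f(n,m)=\frac{C m}{n+m}\left(1+O\left(\frac{m^2}{n^2}+\frac{1}{n}+\frac{1}{m q^{(m+1)\tau}}\right)\right),
$$
which is precisely the statement of Corollary~\ref{cor2}.

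I do not expect any genuine obstacle in this last step: Corollary~\ref{cor2} is simply a convenient restatement of Corollary~\ref{cor1} that displays the universal constant $C$ in place of $c_q(m)$, at the cost of one extra term in the error. The real work lies upstream — in Theorem~\ref{thm3} together with the bound on $\eta_q(m)$ coming from Proposition~\ref{etaqm}, and in the estimate \eqref{IDD3T1} for $d(u)$, which is what converts the factor $d(n/m)$ appearing in Theorem~\ref{thm3} into $\frac{Cm}{n+m}\bigl(1+O(m^2/n^2)\bigr)$ and thereby gives Corollary~\ref{cor1}. Once those ingredients are granted, the deduction above is routine bookkeeping.
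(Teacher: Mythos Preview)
Your proposal is correct and is exactly the approach the paper takes: the paper simply states that Corollary~\ref{cor1} ``clearly implies'' Corollary~\ref{cor2}, i.e., one substitutes $c_q(m)=C\bigl(1+O(1/(m q^{(m+1)\tau}))\bigr)$ and combines the error terms, just as you have done.
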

Finally, the estimate for $\eta_q(m)$ in Theorem \ref{thm3} implies
\begin{corollary}\label{cor3}
For $n\ge 0$, $m\ge 1$ we have
$$ f(n,m)=  d(n/m) \left(1+O\left(\frac{1}{n+m}+\frac{1}{m q^{(m+1)\tau}}\right)\right).$$
\end{corollary}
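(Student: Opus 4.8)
The plan is to obtain Corollary~\ref{cor3} as an immediate consequence of Theorem~\ref{thm3}, which already supplies both the asymptotic formula $f(n,m)=\eta_q(m)\,d(n/m)\bigl(1+O(1/(n+m))\bigr)$ and the estimate $\eta_q(m)=1+O\bigl(1/(m q^{(m+1)\tau})\bigr)$. Thus the whole task reduces to substituting the second estimate into the first and checking that the combined error term collapses to the one stated.

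Concretely, using Theorem~\ref{thm3} I would write
$$ f(n,m)=\Bigl(1+O\Bigl(\frac{1}{m q^{(m+1)\tau}}\Bigr)\Bigr)\, d(n/m)\,\Bigl(1+O\Bigl(\frac{1}{n+m}\Bigr)\Bigr), $$
and then expand the product of the two bracketed factors. This produces the main term $1$, the two individual error terms $O(1/(n+m))$ and $O\bigl(1/(m q^{(m+1)\tau})\bigr)$, and a cross term of order $1/\bigl((n+m)\,m q^{(m+1)\tau}\bigr)$. Since $n+m\ge 1$ and $m q^{(m+1)\tau}\ge 1$ throughout the range $n\ge 0$, $m\ge 1$, this cross term is at most the minimum of the two individual error terms and is therefore absorbed into them. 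Hence
$$ f(n,m)=d(n/m)\Bigl(1+O\Bigl(\frac{1}{n+m}+\frac{1}{m q^{(m+1)\tau}}\Bigr)\Bigr), $$
which is the claim. One checks separately that the degenerate case $n=0$ causes no trouble, since $d(0)=1$ and $f(0,m)=1$.

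There is no genuine obstacle at this stage: all of the substantive analysis — the residue-theorem treatment of the generating function in the range $m\le n/\log n$, the comparison with permutations having no short cycles when $m>n/\log n$, and the evaluation of $\eta_q(m)$ in Proposition~\ref{etaqm} — has already been carried out for Theorem~\ref{thm3} (and for Theorem~\ref{thm2} on which it rests). Corollary~\ref{cor3} is simply the ``$q$-free'' packaging of that theorem, replacing the arithmetic constant $\eta_q(m)$ by the universal function $d$ at the cost of the extra error term $1/(m q^{(m+1)\tau})$, and its proof is a one-line manipulation of $O$-symbols.
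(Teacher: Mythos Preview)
Your proposal is correct and matches the paper's approach exactly: the paper simply states that the estimate for $\eta_q(m)$ in Theorem~\ref{thm3} implies Corollary~\ref{cor3}, and your substitution of $\eta_q(m)=1+O\bigl(1/(m q^{(m+1)\tau})\bigr)$ into $f(n,m)=\eta_q(m)\,d(n/m)(1+O(1/(n+m)))$ is precisely that one-line derivation.
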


We now turn to analogous results in the context of permutations and their decomposition into disjoint cycles. 
Let $S_n$ be the symmetric group on $\{1,2, \ldots ,n\}$.  
One reason for considering permutations is that our proof of the asymptotic result for the factor $\eta_q(m)$ in Theorem \ref{thm3} (and hence for the factor $c_q$ in Theorem \ref{thm1}) depends on Theorem \ref{thm5}, the corresponding result in the context of permutations. 
Moreover, each of the following estimates related to permutations coincides with the limit, as $q\to \infty$, of the corresponding estimate related to polynomials over $\mathbb{F}_q$. Theorem \ref{thm1} takes the following form. 

\begin{theorem}\label{thm4}
The proportion of permutations $\sigma \in S_n$, which have a cycle structure such that every positive integer below $ n$ can be expressed as a sum of lengths of distinct cycles of $\sigma$, is given by
$$
\frac{C}{n}\left( 1 + O\left(\frac{1}{n}\right)\right),
$$
where $C$ is as in Theorem \ref{thm1}.
\end{theorem}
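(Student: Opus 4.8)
The plan is to obtain Theorem~\ref{thm4} as the case $m=1$ of the permutation analogue of Theorem~\ref{thm3}. For $n\ge 0$ and $m\ge 1$, let $g(n,m)$ be the proportion of $\sigma\in S_n$ whose multiset of cycle lengths $a_1\le a_2\le\cdots\le a_j$ has the property that the set of subset sums $\bigl\{\sum_i\varepsilon_i a_i:\varepsilon_i\in\{0,1\}\bigr\}$ has no gap of size greater than $m$; a short argument, comparing the value $a_i$ with the smallest subset sum exceeding $a_1+\cdots+a_{i-1}$ (which turns out to be $a_i$ itself), shows that this is equivalent to the chain of inequalities
\begin{equation*}
a_i\le m+a_1+\cdots+a_{i-1}\qquad(1\le i\le j),
\end{equation*}
so in particular $a_1\le m$ and $g(n,m)=1$ for $0\le n\le m$. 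The event described in Theorem~\ref{thm4} is exactly the case $m=1$, so Theorem~\ref{thm4} is the assertion $g(n,1)=\tfrac{C}{n}\bigl(1+O(1/n)\bigr)$. I would first prove the permutation counterpart of Theorem~\ref{thm3},
\begin{equation*}
g(n,m)=d(n/m)\Bigl(1+O\Bigl(\tfrac{1}{n+m}\Bigr)\Bigr)\qquad(n\ge 0,\ m\ge 1),
\end{equation*}
\emph{with no correction factor} (the analogue of $\eta_q(m)$ being identically $1$), and then Theorem~\ref{thm4} follows by putting $m=1$ and inserting \eqref{IDD3T1}: $d(n)=\frac{C}{n+1}\bigl(1+O(n^{-2})\bigr)=\frac{C}{n}\bigl(1+O(1/n)\bigr)$.

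To prove the displayed estimate for $g(n,m)$ I would transport the proof of Theorem~\ref{thm3} from $\mathbb{F}_q[x]$ to $S_n$, the dictionary being: a monic irreducible of degree $k$ becomes a $k$-cycle, and the count $I_k$, which by \eqref{In} equals $q^k/k+O(q^{k/2})$, is replaced by the exact mean $1/k$ of the number of $k$-cycles of a random permutation. Under this dictionary the role played by $m$-rough polynomials (Theorem~\ref{thm2}, with main term $\omega(u)/m$ by Corollary~\ref{cora}) is taken by permutations with no cycle of length $\le m$, whose proportion $p(n,m)$ satisfies the estimate of Manstavi\v{c}ius and Petuchovas, of the same shape $p(n,m)=e^{-H_m}e^{\gamma}\omega(n/m)\bigl(1+O(\cdots)\bigr)=\frac{\omega(n/m)}{m}\bigl(1+O(1/m)\bigr)$. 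Decomposing a permutation with the gap-$m$ property by splitting off its cycles of length at most the location $s$ of the first gap of size $>m$ among the partial sums of its sorted cycle lengths -- so that these short cycles form, essentially, a gap-$m$ configuration on $\approx s$ points, while all remaining cycles have length $>s+m$ and hence carry an $(s+m)$-rough permutation -- yields, via the generating function $\sum_n g(n,m)x^n$, a recursion which, in the limit $n,m\to\infty$ with $u=n/m$ held fixed, reproduces the integral equation \eqref{dinteq} that defines $d(u)$: the factors of $m$ cancel because $g$ and $p$ are proportions rather than counts, and the upper limit $(u-1)/2$ in \eqref{dinteq} appears exactly as the condition that an $(s+m)$-rough permutation on $n-s\approx(u-v)m$ points can exist, i.e. $\tfrac{u-v}{v+1}\ge1$ with $v=s/m$. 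Finally, because the cycle-length densities $1/k$ are exact -- there is no counterpart of the discrepancy between $\lambda_q(m)$ and $e^{-H_m}$ measured in Theorem~\ref{thm2} -- every error between this recursion and \eqref{dinteq} that was absorbed into $\eta_q(m)$ in the polynomial case now vanishes, so the limiting constant is exactly $1$; this is also why the undistorted constant $C$, rather than $c_q$, appears in Theorem~\ref{thm4}.

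The hard part will be the error bookkeeping in this limiting passage: showing that the discrete recursion for $g(n,m)$ is matched by \eqref{dinteq} with relative error $O(1/(n+m))$, \emph{uniformly in $m$}. The delicate points are: (i) the base range $0\le n\le m$, where the gap condition is vacuous and $g(n,m)=1=d(n/m)$, which initialises the induction cleanly; (ii) controlling permutations with many short cycles, where a term-by-term bound is too lossy and one must instead exploit the decay $d(u)=O(1/(u+1))$ from \eqref{IDD3T1} together with the smoothness and standard bounds for $\omega$; and (iii) carrying out, with total error of the claimed size, the replacement of $p(n-s,s+m)$ by $\omega\bigl(\tfrac{n-s}{s+m}\bigr)/(s+m)$ and of the sum over the initial block of short cycles by the integral in \eqref{dinteq}. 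All of this parallels the integer case in \cite{PDD} and \cite{IDD3} and the proof of Theorem~\ref{thm3}; transcribing it to $S_n$ is, if anything, marginally easier, the one genuinely new observation being that the error terms which are $O(q^{-cm})$ in the polynomial setting are identically zero here, so that the analogue of $\eta_q(m)$ equals~$1$.
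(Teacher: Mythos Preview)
Your overall strategy---deriving Theorem~\ref{thm4} from the permutation analogue of Theorem~\ref{thm3} (which is Theorem~\ref{thm5}) and then setting $m=1$ and applying \eqref{IDD3T1}---is correct and matches the paper. Transporting the proof of Theorem~\ref{thm3} to $S_n$ also works as you describe, and the paper does exactly this to reach the analogue of \eqref{lastf},
\[
g(x,m) = \eta(m)\, d(x/m) + O\bigl(m/(x+m)^2\bigr), \qquad \eta(m) = 1 + O(1/m).
\]

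The gap is in your justification that $\eta(m)=1$. You claim that because the discrepancy between $\lambda_q(m)$ and $e^{-H_m}$ vanishes for permutations, ``every error between this recursion and \eqref{dinteq} that was absorbed into $\eta_q(m)$ in the polynomial case now vanishes.'' This is not so. In the proof of Theorem~\ref{thm3}, the constant $\eta_q(m)=\alpha(m)+\beta(m)$ absorbs errors from: (a) the approximation of $r(n-k,k+m)$ (respectively $p(n-k,k+m)$) via Theorem~\ref{thm2} (respectively Proposition~\ref{fullp}); (b) the replacement of $e^{-H_{k+m}}$ by $e^{-\gamma}/(k+m)$, which carries an $O\bigl((k+m)^{-2}\bigr)$ error that has nothing to do with $q$; (c) the sum-to-integral conversion; and (d) the jump of $\omega$ at $u=1$. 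None of these vanish for permutations, so $\beta(m)\not\equiv 0$ and $\alpha(m)\not\equiv 1$ a priori. The $q$-dependent errors you have in mind account only for the \emph{difference} $\eta_q(m)-\eta(m)$ (this is precisely the content of Propositions~\ref{fandg} and~\ref{etaqm}), not for $\eta(m)-1$; indeed, the paper deduces $\eta_q(m)\to 1$ \emph{from} Theorem~\ref{thm5}, not the other way around.

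The paper establishes $\eta(m)=1$ by a separate, nontrivial argument: multiply the identity of Lemma~\ref{ge} by $z^n$, sum over $n$ using the exact generating function \eqref{Dgen} for $p(\cdot,\cdot)$, differentiate in $z$, insert the already-proved estimate $g(k-m,m)=C\eta(m)m/k+O(m/k^2+m^3/k^3)$, and let $z\to 1^-$. This exhibits $1/(C\eta(m))$ as a limit independent of $m$; combined with $\eta(m)=1+O(1/m)$, one gets $\eta(m)=1$ for every $m$. This step does exploit the exactness of the permutation generating function---so your instinct that exactness matters is right---but it is an identity/limit argument, not a mere vanishing of error terms, and your sketch does not supply it.
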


An alternate way to state Theorem \ref{thm4} is as follows. The proportion of permutations $\sigma \in S_n$, with the property that for every positive integer $m\le n$ there exists a set $M \subseteq \{1,2,3,\ldots,n\}$ with cardinality $m$ such that $\sigma(M)=M$, is given by $Cn^{-1}+O(n^{-2})$.

The analogue of Theorem \ref{thm2} in the context of permutations is Proposition \ref{fullp}, which depends on a recent result by Manstavi\v{c}ius and Petuchovas \cite{Man}. 
The analogue of Theorem \ref{thm3} also holds in this context. Assume a permutation $\sigma \in S_n$ has cycles of length $l_1, l_2, \ldots, l_j$. 
The set
$$ A_3=A_3(\sigma)= \Biggl\{ \sum_{1\le i \le j} \varepsilon_i l_i  : \varepsilon_i \in \{0,1\} \Biggr\} \subseteq [0,n]$$ 
represents the set of cardinalities of sets $M \subseteq \{1,2,3,\ldots,n\}$ such that $\sigma(M)=M $.
Let $g(n,m)$ denote the proportion of permutations in $S_n$ with the property that $A_3$ has no gaps of size greater than $m$. 

\begin{theorem}\label{thm5}
For $n\ge 0$, $m\ge 1$ we have
$$ g(n,m)= d(n/m) \left( 1+O\left(\frac{1}{n+m}\right)\right)  .$$
\end{theorem}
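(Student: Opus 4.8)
The plan is to mirror the polynomial argument for Theorem \ref{thm3} (equivalently, to run the integer-divisor argument of \cite[Theorem 1.3]{PDD} in the permutation model), exploiting the fact that the cycle-indicator generating function for permutations is the ``$q\to\infty$ limit'' of the polynomial one and is in fact cleaner because there are no lower-order terms in \eqref{In}. Recall that a uniformly random $\sigma\in S_n$ has, for each $k\ge 1$, a number $c_k$ of $k$-cycles, and that $c_1,c_2,\dots$ are asymptotically independent Poisson$(1/k)$ variables; more precisely the exact generating-function identity
\[
\sum_{n\ge 0}\Biggl(\sum_{\sigma\in S_n}\prod_{k\ge 1}x_k^{c_k(\sigma)}\Biggr)\frac{z^n}{n!}=\exp\Biggl(\sum_{k\ge 1}x_k\frac{z^k}{k}\Biggr)
\]
holds, which is the permutation analogue of the Euler-product identity used for $r(n,m)$ and $f(n,m)$. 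First I would set up a recursion for $g(n,m)$ by conditioning on the smallest cycle length that is ``used'' to bridge the first gap of size $\le m$ in $A_3$, exactly as \eqref{dinteq} is obtained: writing $u=n/m$, the structure of $A_3$ having no gap $>m$ decomposes according to the position $v m$ of the first divisor degree beyond $m$, producing a discrete convolution that, after normalization, is a Riemann-sum approximation to the integral equation \eqref{dinteq} defining $d(u)$.

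The second step is to control the error in passing from the discrete recursion for $g(n,m)$ to the integral equation for $d(n/m)$. Here I would proceed by induction on $n$ (for fixed $m$), using that $d(u)$ is Lipschitz on compact sets and decays like $C/(u+1)$ by \eqref{IDD3T1}, so that replacing sums $\sum_{v}$ by integrals $\int \mathrm{d}v$ costs $O(1/(n+m))$ at each stage and these errors do not accumulate because the kernel $\frac{1}{v+1}\,\omega\!\big(\tfrac{u-v}{v+1}\big)$ integrates to something strictly less than $1$ on the relevant range (this is precisely what makes $d(u)\asymp 1/(u+1)$ rather than growing). The rapid decay of Buchstab's function $\omega$ (Lemma \ref{lemom}) handles the tail of the convolution and lets me truncate $v\le (u-1)/2$ with negligible error. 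For small $n/m$ (say $n\le m$, where $d(n/m)=1$) the statement is essentially trivial since then $A_3$ automatically has no gap $>m$, so the induction has a clean base case and I only need the inductive step for $n>m$.

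The third step is to handle the regime where $m$ is large relative to $n$ — say $m> n/\log n$ — in which the generating-function/residue approach loses precision; but in the permutation setting this regime is actually easier than for polynomials, because there is no discrepancy between ``permutations with no short cycles'' and an auxiliary object (the role that $p(n,m)$ vs.\ $r(n,m)$ plays in Theorem \ref{thm2} is here filled directly by Proposition \ref{fullp}/\cite{Man}). In that range one writes $g(n,m)$ in terms of the distribution of the large cycles, whose joint law is given exactly by the Poisson–Dirichlet-type formula coming from the identity above, and compares directly with $d(n/m)$, which on $[1,\log n]$ or so is within $O(1/n)$ of its value anyway by \eqref{IDD3T1}. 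Assembling the two ranges gives the stated uniform error $O(1/(n+m))$.

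The main obstacle I anticipate is the bookkeeping in the inductive error analysis of step two: one must verify that the per-step error is genuinely $O(1/(n+m))$ and not merely $O(1/m)$ or $O(1/n)$ separately, which requires keeping the two small parameters $1/n$ and $1/m$ in play simultaneously and using the decay of $d$ to damp the contributions of the many convolution terms. This is the same technical heart as in \cite[Theorem 1.3]{PDD} and \cite{IDD3}, and I would import those estimates essentially verbatim, the only simplification being that the permutation model has exact ``$I_k = 1/k$ weighting'' with no $O(q^{k/2})$ corrections, so the factor analogous to $\eta_q(m)$ is identically $1$ and no separate analysis of it is needed.
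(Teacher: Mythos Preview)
Your overall plan --- mirror the proof of Theorem~\ref{thm3} in the permutation model --- is exactly what the paper does, and indeed Lemmas~\ref{condp}--\ref{inteqp} are the verbatim analogues of Lemmas~\ref{cond}--\ref{inteq}. Following that route (whether via Laplace transforms as in the paper or via your proposed induction) yields
\[
g(x,m)=\eta(m)\,d(x/m)+O\!\left(\frac{m}{(x+m)^2}\right),\qquad \eta(m)=1+O(1/m),
\]
which is the analogue of \eqref{lastg}. The gap in your proposal is the final sentence: you assert that because the permutation model has exact weights $1/k$ with no $O(q^{-k/2})$ corrections, ``the factor analogous to $\eta_q(m)$ is identically $1$ and no separate analysis of it is needed.'' This is not correct. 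The absence of the $q^{-k/2}$ terms is what makes the permutation analogues of Proposition~\ref{rap} and Proposition~\ref{cqh} trivial; it is \emph{not} what forces $\eta(m)=1$. The factor $\eta(m)=\alpha(m)+\beta(m)$ arises from the passage from the discrete recursion to the continuous integral equation (the Riemann-sum error and the approximation $p(n-k,k+m)\approx\omega(\cdot)/(k+m)$), and there is no a priori reason these discretization errors cancel exactly. Note that the theorem requires $g(n,m)=d(n/m)(1+O(1/(n+m)))$ uniformly in $n$; letting $n\to\infty$ with $m$ fixed, this forces $\eta(m)=1$ \emph{exactly}, not merely $1+O(1/m)$.

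The paper devotes the bulk of the proof of Theorem~\ref{thm5} to establishing $\eta(m)=1$, and the argument is not one you have anticipated. One multiplies the identity of Lemma~\ref{ge} by $z^n$, sums over $n$, uses the exact generating function \eqref{Dgen} for $p(\cdot,k+m)$, and differentiates in $z$ to obtain a closed-form identity \eqref{mz}. Inserting the estimate $g(k-m,m)=C\eta(m)m/k+O(m/k^2+m^3/k^3)$ and letting $z\to 1^-$ shows that
\[
\frac{1}{C\eta(m)}=\lim_{z\to 1^-}\sum_{k\ge 1}\frac{z^k}{k}\exp\!\Biggl(-\sum_{j=1}^{k}\frac{z^j}{j}\Biggr)\left(\frac{k}{z}-\frac{1-z^k}{1-z}\right),
\]
a quantity visibly independent of $m$. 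Hence $\eta(m)$ is constant, and combined with $\eta(m)=1+O(1/m)$ this gives $\eta(m)\equiv 1$. Your proposal contains no mechanism to reach this conclusion; the inductive error-control you sketch would at best reproduce $\eta(m)=1+O(1/m)$, which is insufficient.
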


Unlike Theorem \ref{thm3} and \cite[Theorem 1.3]{PDD}, 
Theorem \ref{thm5} does not involve a factor $\eta(m)$, which suggests that
this variant of the problem, dealing with permutations, exhibits the simplest behavior.
With the estimate \eqref{IDD3T1}, we get 
\begin{corollary}\label{cor1P}
For $n\ge m\ge 1$ we have
$$g(n,m)=  \frac{C m}{n+m} \left( 1+O\left(\frac{m^2}{n^2}+\frac{1}{n}\right)\right).$$
\end{corollary}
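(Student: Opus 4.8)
The final statement to prove is Corollary \ref{cor1P}: for $n\ge m\ge 1$,
$$g(n,m)= \frac{C m}{n+m}\left(1+O\left(\frac{m^2}{n^2}+\frac1n\right)\right).$$

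The plan is to derive this directly from Theorem \ref{thm5}, which gives $g(n,m)=d(n/m)(1+O(1/(n+m)))$, by substituting the asymptotic formula \eqref{IDD3T1} for $d(u)$. This is exactly parallel to how Corollary \ref{cor1} is obtained from Theorem \ref{thm3} (indeed it is the $\eta_q(m)\equiv 1$ specialization), so the argument should be short and purely computational.

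First I would set $u=n/m$ and note that since $n\ge m\ge 1$ we have $u\ge 1$, so \eqref{IDD3T1} applies and yields
$$d(n/m)=\frac{C}{u+1}\Bigl(1+O(u^{-2})\Bigr)=\frac{Cm}{n+m}\left(1+O\left(\frac{m^2}{n^2}\right)\right),$$
using $u^{-2}=m^2/n^2$ and $C/(u+1)=Cm/(n+m)$. Next I would feed this into Theorem \ref{thm5}: multiplying the two error factors gives
$$g(n,m)=\frac{Cm}{n+m}\left(1+O\left(\frac{m^2}{n^2}\right)\right)\left(1+O\left(\frac{1}{n+m}\right)\right)=\frac{Cm}{n+m}\left(1+O\left(\frac{m^2}{n^2}+\frac{1}{n+m}\right)\right).$$
Finally, since $n\ge m\ge 1$ implies $n+m\ge n$, hence $1/(n+m)\le 1/n$, the term $1/(n+m)$ can be absorbed into $1/n$, producing the stated error $O(m^2/n^2+1/n)$. (One could equally keep $1/(n+m)$; replacing it by the slightly larger $1/n$ only weakens the bound and matches the form of Corollary \ref{cor1}.)

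There is essentially no obstacle here: the only thing to be mildly careful about is the range of validity of \eqref{IDD3T1}, namely $u\ge 1$, which is guaranteed precisely by the hypothesis $n\ge m$; for $n<m$ one would have $u<1$ and $d(u)$ would be near its boundary behavior, so the corollary is correctly stated only for $n\ge m$. Beyond that, it is a matter of bookkeeping with $O$-symbols, and the implied constants remain absolute because those in Theorem \ref{thm5} and in \eqref{IDD3T1} are.
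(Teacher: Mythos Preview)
Your proof is correct and follows exactly the paper's approach: the paper simply states that Corollary~\ref{cor1P} follows from Theorem~\ref{thm5} by inserting the estimate~\eqref{IDD3T1}, and you have carried out precisely that substitution together with the routine bookkeeping on the error terms.
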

Theorem \ref{thm4} follows from Corollary \ref{cor1P} with $m=1$.

\medskip

The situation is somewhat different in the context of integer partitions.
Erd\H{o}s and Szalay \cite{ES}, and later Dixmier and Nicolas \cite{DN}, investigated the problem analogous to Theorems \ref{thm1} and \ref{thm4}. For the integer partition $n=l_1+l_2+ \ldots + l_j$, consider the set
$$ A_4 =  \Biggl\{\sum_{1\le i \le j} \varepsilon_i l_i  : \varepsilon_i \in \{0,1\} \Biggr\} \subseteq [0,n].$$ 
Dixmier and Nicolas \cite{DN} call a partition \emph{practical} if $A_4=[0,n]\cap \mathbb{Z}$. They show that the proportion of partitions of $n$ which are practical is given by (see \cite[Theorem 2]{DN} for the full  result) 
$$ 1-\frac{\pi}{\sqrt{6n}} + O\left(\frac{1}{n}\right).$$
Thus, unlike with integers, polynomials and permutations, almost all partitions satisfy $A_4=[0,n]\cap \mathbb{Z}$.

A related problem is the study of the sequence of degrees $n$ for which the particular polynomial $X^n-1$ has divisors of every degree in $K[X]$, where $K$ is a given field. 
By adapting the work of Saias \cite{Sai} on practical numbers, Thompson \cite{T1} found that the number of such $n\le x$ is $\asymp x/\log x$ when $K=\mathbb{Q}$.  This was extended to any number field $K$ by Pollack and Thompson \cite{PT}. When $K=\mathbb{F}_p$, the true order of magnitude of the number of such $n\le x$ is still unknown, 
but Thompson \cite{T2} shows that they have asymptotic density zero by assuming the Generalized Riemann Hypothesis.

\subsection*{Acknowledgements}
The author thanks Paul Pollack for suggesting the question which is answered in Theorem \ref{thm1}, and Eugenijus
Manstavi\v{c}ius for providing some references.
\section{Proof of Theorem \ref{thm2}}

 When $m\le n/\log n $, the first statement in Theorem \ref{thm2} follows from Lemma \ref{lemom} and Proposition \ref{rsm}.
When $n/\log n < m < n$, Theorem \ref{thm2} follows from Propositions \ref{Mansta}, \ref{rap}  and  \ref{cqh}. The second statement in Theorem \ref{thm2} is Proposition \ref{cqh}.
We put $r(0,m)=1$ for $m\ge 0$. 

\begin{lemma}\label{lemom}
For $u>1$, we have $\omega(u)- e^{-\gamma} \ll \exp(-u\log (u\log u) +O(u)) .$
\end{lemma}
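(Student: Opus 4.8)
The plan is to work with the Laplace transform of Buchstab's function, peel off the transform of the limiting constant $e^{-\gamma}$, and recover $\omega(u)-e^{-\gamma}$ by an inversion integral whose contour is pushed far into the left half--plane. First I would record the transform: setting $\omega(v)=0$ for $v<1$, the relation $(u\omega(u))'=\omega(u-1)$ holds for all $u>1$ (on $(1,2)$ it reads $0=0$, since $u\omega(u)\equiv1$ there), and an integration by parts turns it into the differential equation $-sW'(s)=e^{-s}(1+W(s))$ for $W(s):=\int_1^\infty\omega(u)e^{-su}\,\mathrm{d}u$. Integrating from $s$ to $\infty$ and using $W(s)\to0$ together with $E_1(s):=\int_s^\infty e^{-t}t^{-1}\,\mathrm{d}t\to0$ gives
\[
1+W(s)=\exp\bigl(E_1(s)\bigr).
\]
Since $E_1(s)=-\gamma-\log s+\sum_{k\ge1}\tfrac{(-1)^{k+1}}{k\cdot k!}s^{k}$, the function $e^{E_1(s)}$ is meromorphic on $\mathbb{C}$ with a single simple pole, at $s=0$, of residue $e^{-\gamma}$; hence $\Phi(s):=e^{E_1(s)}-1-e^{-\gamma}/s$ is entire, and it agrees with $\int_0^\infty(\omega(u)-e^{-\gamma})e^{-su}\,\mathrm{d}u$ for $\re s>0$. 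As $\omega-e^{-\gamma}$ is bounded and integrable on $(0,\infty)$, Laplace inversion and Cauchy's theorem give, for every $\sigma<0$ and every $u>0$,
\[
\omega(u)-e^{-\gamma}=\frac{1}{2\pi i}\int_{\sigma-i\infty}^{\sigma+i\infty}\Phi(s)e^{su}\,\mathrm{d}s=\frac{1}{2\pi i}\int_{\sigma-i\infty}^{\sigma+i\infty}e^{E_1(s)+su}\,\mathrm{d}s,
\]
the last equality because, for $u>0$ and $\sigma<0$, the summands $-1$ and $-e^{-\gamma}/s$ of $\Phi$ contribute nothing.

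The estimate itself would then come from a saddle--point analysis of $\psi(s):=E_1(s)+su$, exactly parallel to de Bruijn's treatment of Dickman's function $\rho$. The saddle equation $\psi'(s)=u-e^{-s}/s=0$ reads $e^{-s}=us$, i.e. $s+\log s=-\log u$; for large $u$ its relevant roots form a conjugate pair $s_0,\overline{s_0}$ with $\re s_0=-(\log u+\log\log u)+o(\log\log u)$, and since $e^{-s_0}=us_0$ forces $E_1(s_0)\sim e^{-s_0}/s_0=u$, one finds $\re\psi(s_0)=u+u\,\re s_0+o(u)=-u\log(u\log u)+u+o(u)$. Shifting the contour in the displayed integral to $\re s=\re s_0$ and bounding the integrand by its value near the saddle (times polynomial factors in $u$) yields $\omega(u)-e^{-\gamma}\ll\exp\bigl(-u\log(u\log u)+O(u)\bigr)$ for all sufficiently large $u$; for $u$ in a fixed bounded range the inequality is trivial, since $\omega$ is bounded while the right--hand side is $\exp(O(1))$.

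I expect the main obstacle to be making the contour estimate rigorous rather than formal. The difficulty is that $e^{E_1(\sigma+it)}\to1$, not $0$, as $|t|\to\infty$, and $e^{E_1(s)}-1=E_1(s)+O(E_1(s)^2)$ decays only like $1/|t|$ along a vertical line, so the inversion integral is merely conditionally convergent and the crude bound $\int|\Phi(s)e^{su}|\,|\mathrm{d}s|$ diverges. To circumvent this I would integrate by parts once or twice in $s$ (each step trades a factor $u$ against a factor $s^{-1}$ in the transform, so after two steps the integrand decays like $|t|^{-3}$ and everything converges absolutely), or equivalently subtract off the elementary pieces whose inverse transforms are known in closed form; one also has to verify that along $\re s\asymp-\log(u\log u)$ one has $|E_1(s)|\le u(1+o(1))$, so that the integrand is dominated by $e^{\sigma u}$ away from the saddle. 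If a reference is acceptable, the lemma follows at once from de Bruijn's classical bound $\omega(u)-e^{-\gamma}\ll\rho(u)$ combined with the standard estimate $\rho(u)=\exp\bigl(-u\log(u\log u)+O(u)\bigr)$.
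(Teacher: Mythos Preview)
The paper's own proof is a one-line citation: ``This follows from \cite[Theorem III.6.4]{Ten}.'' That theorem in Tenenbaum's book gives a precise asymptotic for $\omega(u)-e^{-\gamma}$, from which the stated upper bound is immediate.

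Your proposal is correct in outline, and in fact what you sketch is essentially the content of Tenenbaum's proof of the cited theorem: the identity $1+\widehat{\omega}(s)=\exp(E_1(s))$, the isolation of the pole at $s=0$ carrying the constant $e^{-\gamma}$, and a saddle-point inversion with saddles solving $e^{-s}=us$ located at $\re s\sim -\log(u\log u)$. You correctly flag the convergence issue (the integrand tends to a nonzero constant along vertical lines) and the standard remedy of one or two integrations by parts. Your closing remark---that one may simply invoke de~Bruijn's bound $\omega(u)-e^{-\gamma}\ll\rho(u)$ together with the classical $\rho(u)=\exp(-u\log(u\log u)+O(u))$---is the closest match to what the paper actually does, and would be the economical choice here. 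The detailed saddle-point route buys you a self-contained argument (and in principle sharper error terms), at the cost of the nontrivial tail analysis you mention; the citation buys brevity.
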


\begin{proof}
This follows from  \cite[Theorem III.6.4]{Ten}.
\end{proof}

\begin{proposition}\label{rsm}
Let  $u=n/m$. For $1\le m \le n/\log n$, 
$$r(n,m)=\lambda_q(m) \Bigl(1+O \bigl((u/e)^{-u} m^{-1}\bigr)\Bigr).$$
If $m\ge 3$, we can replace $e$ by $1$ in the error term.
\end{proposition}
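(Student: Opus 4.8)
The plan is to work with the generating function of $r(n,m)$ and extract the coefficient $r(n,m)$ by Cauchy's formula. Writing $R_m(z) = \sum_{n\ge 0} r(n,m) q^n z^n$ as a product over irreducible polynomials, one has
$$ R_m(z) = \prod_{k>m} \left(1-z^k\right)^{-I_k} \Big/ \prod_{k\ge 1}\left(1-z^k\right)^{-I_k} = (1-qz) \prod_{k\le m}\left(1-z^k\right)^{-I_k}, $$
since $\prod_{k\ge 1}(1-z^k)^{-I_k} = (1-qz)^{-1}$ is the zeta function of $\mathbb{F}_q[X]$. Thus $r(n,m)q^n$ is the coefficient of $z^n$ in $(1-qz)\prod_{k\le m}(1-z^k)^{-I_k}$, and
$$ r(n,m) = \frac{1}{2\pi i} \oint \frac{(1-qz)}{q^n} \prod_{k\le m}\left(1-z^k\right)^{-I_k} \frac{\mathrm{d} z}{z^{n+1}}. $$
First I would substitute $z = w/q$ and analyze $G(w) := (1-w)\prod_{k\le m}(1-(w/q)^k)^{-I_k}$, aiming to show that on a suitable circle $|w| = \rho$ with $\rho$ slightly larger than $1$, the main contribution to the integral comes from a neighborhood of $w=1$, reproducing the saddle-point behavior that yields $\omega(u)$; the factor $\lambda_q(m)$ emerges because $\prod_{k\le m}(1-(1/q)^k)^{-I_k} = \lambda_q(m)^{-1}$ is precisely the value relevant at $w=1$.

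The cleanest route, and the one I expect the paper uses, is to \emph{reduce to the integer (or arithmetical-semigroup) case} rather than redo the contour analysis from scratch. The coefficients $\lambda_q(m) r(n,m) q^n$ count, with the zeta-function normalization, exactly the $m$-rough polynomials, and the Dickman–Buchstab machinery for $\Phi(x,y)$ (integers free of small prime factors) transfers verbatim once one checks that the ``prime counting function'' $\sum_{k\le t} I_k = q^t/t + O(q^{t/2})$ obeys the same Chebyshev/Mertens-type estimates as $\pi(t)$, with errors that are uniform in $q$. Concretely, I would (i) establish the recursion $n\, r(n,m) = \sum_{m<k\le n} I_k\, r(n-k,m)\cdot(\text{something})$ — more precisely the Buchstab-type identity obtained by removing the lowest-degree irreducible factor — then (ii) feed it into an inductive comparison with $\lambda_q(m)e^\gamma\omega(n/m)$, exactly as one proves $\Phi(x,y)\sim \omega(u)x/\log y$. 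The error term $(u/e)^{-u}m^{-1}$ then comes from two sources: the $O(q^{k/2})$ error in $I_k$ (which contributes a factor $O(q^{-(m+1)/2})$, absorbed since $m\ge 1$ means $q^{-(m+1)/2}\le q^{-1}\ll m^{-1}$ only after more care — actually this is where $\lambda_q(m)$ vs.\ $e^{-H_m}$ gets used), and the intrinsic $m^{-1}$ loss in approximating the discrete convolution by the Buchstab integral equation, with $\omega(u)-e^{-\gamma} \ll (u/e)^{-u}$ (a weak form of Lemma \ref{lemom}) controlling the tail.

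The main obstacle will be making all estimates \emph{uniform in $q$}: when $q$ is small the error $2q^{k/2}/k$ in \eqref{In} is not negligible relative to $q^k/k$ for small $k$, so the naive comparison $\sum_{k\le t} I_k \approx q^t/t$ degrades. I would handle this by keeping the factor $\lambda_q(m)$ exact throughout — it already packages the small-$k$ discrepancies — and only invoking asymptotics of $I_k$ for $k$ in the ``bulk'' range $m < k \le n$, where $u=n/m$ large forces $k$ large enough that $q^{k/2}/q^k$ is a genuine saving. A secondary technical point is that $\omega(u)$ is only piecewise smooth (kinks at integers), so the saddle-point/induction argument must be set up to tolerate the lack of higher regularity; this is standard but needs the explicit form of the Buchstab equation $(u\omega(u))'=\omega(u-1)$ to push the induction through one unit interval at a time. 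For the improvement ``replace $e$ by $1$ when $m\ge 3$'', I would simply note that for $m\ge 3$ the secondary saddle / error terms are smaller by a factor that more than compensates, using $\omega(u)-e^{-\gamma}\ll u^{-u}$ from Lemma \ref{lemom} together with the fact that the $O(q^{k/2})$ contributions are then dominated.
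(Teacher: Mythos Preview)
Your proposal misreads what Proposition~\ref{rsm} is actually asserting, and this sends the entire plan in the wrong direction. The statement says $r(n,m)=\lambda_q(m)\bigl(1+O((u/e)^{-u}m^{-1})\bigr)$: in the range $m\le n/\log n$ we have $u\ge\log n$, so the claim is simply that $r(n,m)$ is extremely close to the \emph{constant} $\lambda_q(m)$. The Buchstab function $\omega$ does not appear in the conclusion and does not appear in the proof. It enters Theorem~\ref{thm2} only afterwards, via Lemma~\ref{lemom}, which says $e^\gamma\omega(u)=1+O((u/e)^{-u})$; that is how the $\omega$-factor is inserted, not how it is proved. Your saddle-point picture (``main contribution from a neighbourhood of $w=1$ reproducing $\omega(u)$'') and your alternative Buchstab-recursion-plus-induction scheme are both aimed at extracting $\omega(u)$, which is not the task here.

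The paper's argument is a direct and short contour shift. With the correct generating function
\[
F_m(z)=\prod_{k>m}(1-z^k)^{-I_k}=\frac{1}{1-qz}\prod_{1\le k\le m}(1-z^k)^{I_k}
\]
(your display inverts this: you put $1-qz$ in the numerator and the finite product in the denominator, which destroys the pole structure), one writes $r(n,m)=[z^n]F_m(z/q)$ and moves the Cauchy circle from $|z|=1/2$ to $|z|=R>1$. The simple pole at $z=1$ contributes exactly the residue $\lambda_q(m)$, and on $|z|=R$ one uses only the crude bound $|F_m(z/q)|\le (R-1)^{-1}\exp\bigl(\sum_{k\le m}R^k/k\bigr)$. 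The remaining work is an elementary optimisation of $R$ (roughly $R^m\approx n$ or $R^m\approx u\log u$ depending on the range) to make the bound $\ll (u/e)^{-u}m^{-2}$, with a slightly sharper inequality for $\sum_{k\le m}R^k/k$ when $m\ge 3$ yielding the improvement to $u^{-u}$. There is no saddle-point analysis, no comparison with the integer case, no induction over unit intervals, and no need to control $I_k-q^k/k$ beyond the trivial $I_k\le q^k/k$.
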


\begin{proof}
The number of monic polynomials of degree $n$, which have no non-constant divisors of degree $\le m$, is given by $[z^n] F_m(z)$, the coefficient of $z^n$ in the power series of 
$$ F_m(z):= \prod_{k>m} \left(1-z^k\right)^{-I_k} = \frac{1}{1-qz} \prod_{1\le k\le m} \left(1-z^k\right)^{I_k}
\quad (|z|<1/q).$$
Cauchy's residue theorem yields
$$r(n,m)= \frac{[z^n] F_m(z)}{q^n} = [z^n] F_m(z/q) 
= \frac{1}{2 \pi i} \int_{|z|=1/2} F_m(z/q) \frac{\mathrm{d} z}{z^{n+1}}.$$
Stretching the contour to $|z|=R>1$ leaves a residue of $\lambda_q(m)$ from the pole at $z=1$. 
When $|z|=R$ we have
\begin{equation*}
\bigl|F_m(z/q)\bigr| \le \frac{1}{R-1}\prod_{1\le k \le m } \left(1+\frac{R^k}{q^k}\right)^{I_k}
\le \frac{1}{R-1} \exp\left(\sum_{1\le k \le m} \frac{R^k }{k} \right),
\end{equation*}
since  $I_k \le q^k/k$ by \eqref{In} and $\log(1+x)\le x$ for $x>0$. Hence
\begin{equation}\label{rcd}
\bigl|r(n,m)-\lambda_q(m)\bigr| \le \frac{1}{R-1}  \exp\left(\sum_{1\le k \le m} \frac{R^k }{k} \right) \frac{1}{R^{n}}.
\end{equation}
If $m=1$ or $2$, the choice $R^m=n$ makes  the right-hand side of \eqref{rcd} $\ll (u/e)^{-u} m^{-2}$. 
For $m\ge 3$, we write 
\begin{equation}\label{sumzj}
\begin{split}
\sum_{1\le k \le m} \frac{R^k }{k} & = H_m +  \sum_{1\le k \le m} \frac{R^k-1 }{k} = H_m +  \sum_{1\le k \le m} \int_0^{\log R} e^{kt}  \, \mathrm{d} t \\
& =H_m+ \int_0^{\log R}  (e^t -1+1) \, \frac{e^{mt}-1}{e^t-1}  \, \mathrm{d} t \\
&= H_m+ \int_0^{\log R} (e^{mt}-1)  \, \mathrm{d}t + \int_0^{\log R}  \frac{e^{mt}-1}{e^t-1}  \, \mathrm{d}t \\
& \le H_m + \frac{R^m}{m} +  \int_0^{\log R^m}  \frac{e^t-1}{t}  \, \mathrm{d}t,
\end{split}
\end{equation}
since $e^t-1\ge t$.
A little calculus exercise shows that for $x>0$ we have
$$ \int_0^x \frac{e^t -1}{t}  \, \mathrm{d}t \le \frac{e^x}{x}\left(1+\frac{2}{x}\right).$$
Hence
\begin{equation}\label{ser}
\sum_{1\le k \le m} \frac{R^k }{k}  \le H_m + \frac{R^m}{m}  
+ \frac{R^m}{\log R^m} \left(1+\frac{2}{\log R^m}\right).
\end{equation}
When $3\le m \le \log n / \log\log\log n$, we choose $R$ such that $R^m=n$. 
If $\log n / \log\log\log n <m\le n/\log n$, we choose $R$ such that $R^m=u \log u$. In each case, \eqref{ser} shows that the right-hand side of \eqref{rcd} is $\ll u^{-u} m^{-2}$. 
 \end{proof}

\begin{lemma}\label{fer}
For $n>m\ge 1$,
$$ n \, r(n,m)=1+\sum_{m<k<n-m} r(k,m) +  O\left( \frac{1}{q^{n/2}} + \frac{1}{m q^{(m+1)/2}} \right).$$
\end{lemma}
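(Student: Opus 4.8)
The plan is to establish an exact Buchstab-type recursion for $N_m(n):=q^n r(n,m)$, the number of $m$-rough monic polynomials of degree $n$, and then isolate its main term. First I would prove the identity
\[
 n\,N_m(n)=\sum_{m<k\le n} kI_k\,N_m(n-k)+\sum_{j\ge 2}\ \sum_{\substack{k>m\\ jk\le n}} kI_k\,N_m(n-jk),
\]
either by taking the logarithmic derivative of $F_m(z)=\prod_{k>m}(1-z^k)^{-I_k}$, or combinatorially by counting pairs $(F,P)$ with $F$ an $m$-rough monic polynomial of degree $n$ and $P$ an irreducible factor, each pair weighted by $\deg P$ and by the exact power of $P$ dividing $F$: summing over $F$ first gives $n N_m(n)$, summing over $P$ first gives the right-hand side.

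Next I would extract the main term. By \eqref{In} we have $0\le q^k-kI_k<2q^{k/2}$, so in the $j=1$ sum I may replace $kI_k$ by $q^k$ at the cost of a small error. Since $N_m(n-k)=q^{n-k}r(n-k,m)$, $r(0,m)=1$, and $r(l,m)=0$ for $1\le l\le m$ (such a polynomial is itself a non-constant divisor of degree $\le m$), the substitution $l=n-k$ gives $\sum_{m<k\le n}q^k N_m(n-k)=q^n\sum_{0\le l<n-m}r(l,m)=q^n\bigl(1+\sum_{m<l<n-m}r(l,m)\bigr)$. Dividing the identity by $q^n$ thus yields
\[
 n\,r(n,m)=1+\sum_{m<l<n-m}r(l,m)-E_2+E_1,
\]
where $E_1\ge 0$ is the contribution of the terms with $j\ge 2$ and $E_2=\sum_{m<k\le n}(1-kI_k/q^k)\,r(n-k,m)\ge 0$.

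For the error terms I would first dispose of the easy one: bounding $kI_k\le q^k$ and $r\le 1$ gives $E_1\le\sum_{j\ge2}\sum_{k>m}q^{-(j-1)k}\ll q^{-(m+1)}$, which is $\ll \frac{1}{m\,q^{(m+1)/2}}$ since $m\ll q^{(m+1)/2}$. Similarly $0\le E_2<2\sum_{m<k\le n}q^{-k/2}r(n-k,m)$; the term $k=n$ contributes $\ll q^{-n/2}$, the terms with $n-m\le k<n$ vanish, and the rest equals $2\sum_{m<l<n-m}q^{-(n-l)/2}r(l,m)$. Provided one knows the uniform bound $r(l,m)\ll 1/m$ for all $l>m$, this last sum is $\ll \frac1m\sum_{j>m}q^{-j/2}\ll \frac{1}{m\,q^{(m+1)/2}}$, which completes the lemma.

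The main obstacle is therefore the uniform estimate $r(l,m)\ll 1/m$. For $m<l\le 2m$ an $m$-rough polynomial of degree $l$ must be irreducible (any proper factorization produces a factor of degree $\le l/2\le m$), so $r(l,m)=I_l/q^l\le 1/l<1/m$; and Proposition~\ref{rsm} already gives $r(l,m)\ll\lambda_q(m)\ll 1/m$ once $l\gg m\log m$, so only the intermediate range remains. There I would argue by induction on $l$, feeding back the recursion proved in the first two steps: that recursion is unconditional and its error term is $O(1/m)$ using nothing beyond $r\le 1$, so no circularity arises. Assuming $r(i,m)\le A/m$ for all $m<i<l$ gives $\sum_{m<i<l-m}r(i,m)\le (l-2m)A/m$, whence $m\,r(l,m)\le A+\frac{m(1+B_0-2A)}{l}\le A$ once the absolute constant $A$ is chosen large enough in terms of the absolute constant $B_0$ bounding the recursion's error. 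I expect this step to be the delicate one, since the residue and Rankin-type estimates used elsewhere in the paper do not obviously produce the clean factor $1/m$ in the range $m<l\ll m\log m$.
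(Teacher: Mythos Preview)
Your proof is correct and follows the same route as the paper: the recursion via the logarithmic derivative of $F_m(z)$, the replacement $kI_k\to q^k$ using \eqref{In}, and the same splitting of the error into the $j\ge 2$ tail and the $q^{-k/2}$-weighted sum. The paper simply asserts $r(n-k,m)\ll 1/m$ for $k<n$ without further comment, so your inductive justification of this bound---which works exactly as you describe, with no circularity since the crude error $O(q^{-(m+1)/2})$ already follows from $r\le 1$ alone---is if anything more careful than the published argument.
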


\begin{proof}
Let $R(n,m)=q^n r(n,m)$, the number of monic polynomials of degree $n$, which have no non-constant divisors of degree $\le m$. 
As in the proof of Proposition \ref{rsm}, the generating function of $R(n,m)$ is given by 
$$ \sum_{n\ge 0} R(n,m) z^n = F_m(z) = \prod_{k>m} \left(1-z^k\right)^{-I_k} = \exp\left( -\sum_{k>m} I_k \log(1-z^k) \right).$$
Differentiating with respect to $z$ gives
$$  \sum_{n\ge 1} n\, R(n,m) z^{n-1}=F'_m(z) = F_m(z) \sum_{k>m} I_k \frac{k z^{k-1}}{1-z^k} $$
and hence
$$  \sum_{n\ge 1} n\, R(n,m) z^{n-1} = \left( \sum_{n\ge 0} R(n,m) z^n\right) \left(\sum_{k>m} k I_k   \sum_{j\ge 1} z^{kj-1}\right).$$
Comparing coefficients of $z^{n-1}$, we find that
$$  n\, R(n,m) = \sum_{k>m} k I_k   \sum_{j\ge 1} R(n-kj,m)  .$$
We estimate $k I_k$ by \eqref{In} and divide both sides by $q^n$ to get
\begin{equation}\label{rer}
  n \, r(n,m) =  \sum_{k>m} r(n-k,m) + O\left(   \sum_{m<k\le n} q^{-k/2} r(n-k,m) + \sum_{j\ge 2} \sum_{k>m} q^{-k(j-1)} \right).
\end{equation}
Note that
$$  \sum_{k>m} r(n-k,m)  = 1+\sum_{m<k<n-m} r(k,m),$$
since $r(0,m)=1$ and $r(k,m)=0$ for $1\le k \le m$. The double sum in the error term of \eqref{rer} is $\ll q^{-(m+1)}$, since $q\ge 2$. 
We have $r(n-k,m)=1$ when $k=n$ and $r(n-k,m) \ll 1/m$ when $k<n$. Hence the first sum in the error term of \eqref{rer} is acceptable.
\end{proof}

Recall that $p(n,m)$ denotes the proportion of permutations of $n$ objects having no cycles of length $\le m$. We put $p(0,m)=1$. 

\begin{proposition}\label{psm}
Let $u=n/m$. For $1\le m \le n/\log n$, 
$$p(n,m)=e^{-H_m} \Bigl(1+O \bigl((u/e)^{-u} m^{-1}\bigr)\Bigr).$$
 If $m\ge 3$, we can replace $e$ by $1$ in the error term.
\end{proposition}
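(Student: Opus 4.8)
The plan is to mimic the proof of Proposition \ref{rsm}, replacing the polynomial generating function by its permutation counterpart. Since a cycle of length $k$ contributes a factor $z^k/k$ to the exponential generating function, the proportion $p(n,m)$ has generating function
$$ G_m(z) := \sum_{n\ge 0} p(n,m)\, z^n = \exp\left(\sum_{k>m}\frac{z^k}{k}\right) = \frac{1}{1-z}\exp\left(-\sum_{1\le k\le m}\frac{z^k}{k}\right),$$
which is meromorphic on $\mathbb{C}$ with a single simple pole, at $z=1$, where the residue of $G_m(z)$ equals $-e^{-H_m}$.

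First I would write $p(n,m) = [z^n] G_m(z) = \frac{1}{2\pi i}\int_{|z|=1/2} G_m(z)\, z^{-n-1}\,\mathrm{d}z$ and stretch the contour to a circle $|z| = R$ with $R>1$, collecting the residue $e^{-H_m}$ from the pole at $z=1$. On $|z| = R$ we have $|1-z| \ge R-1$ and $\bigl|\exp(-\sum_{1\le k\le m} z^k/k)\bigr| = \exp\bigl(-\re\sum_{1\le k\le m}z^k/k\bigr) \le \exp\bigl(\sum_{1\le k\le m} R^k/k\bigr)$, so
$$ \bigl| p(n,m) - e^{-H_m}\bigr| \le \frac{1}{R-1}\exp\left(\sum_{1\le k\le m}\frac{R^k}{k}\right)\frac{1}{R^n}. $$
This is exactly the bound \eqref{rcd} from the proof of Proposition \ref{rsm}; indeed the permutation case is marginally cleaner, since the weight $1/k$ appears directly rather than via the inequality $I_k\le q^k/k$.

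From this point the argument is word for word the same as in Proposition \ref{rsm}: for $m=1,2$ choose $R^m=n$ to make the right-hand side $\ll (u/e)^{-u}m^{-2}$; for $m\ge 3$ invoke the estimates \eqref{sumzj}--\eqref{ser} and choose $R^m=n$ when $3\le m\le \log n/\log\log\log n$, respectively $R^m = u\log u$ when $\log n/\log\log\log n< m\le n/\log n$, making the right-hand side $\ll u^{-u}m^{-2}$. Finally, since $H_m = \log m + \gamma + O(1/m)$ gives $e^{-H_m}\asymp 1/m$, the additive error translates into the claimed relative error $O((u/e)^{-u}m^{-1})$, or $O(u^{-u}m^{-1})$ when $m\ge 3$. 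I do not anticipate a genuine obstacle: the only point needing care is to arrange the contour-integral bound so that it coincides with \eqref{rcd}, after which the choices of $R$ already made in Proposition \ref{rsm} may be quoted verbatim.
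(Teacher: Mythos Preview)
Your proposal is correct and follows the paper's own proof essentially verbatim: set up the generating function, extract the residue $e^{-H_m}$ at $z=1$, bound the contour integral on $|z|=R$ to recover exactly the estimate \eqref{rcd}, and then quote the remainder of the proof of Proposition \ref{rsm}. The only cosmetic difference is that the paper calls the generating function $D_m(z)$ rather than $G_m(z)$.
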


\begin{proof}
The generating function for $p(n,m)$ satisfies \cite[Ex. IV.9]{FS}
\begin{equation}\label{Dgen}
D_m(z) := \sum_{n\ge 0} p(n,m)z^n = \exp \left(\sum_{k\ge m+1} \frac{z^k}{k} \right) 
= \frac{1}{1-z} \exp \left(-\sum_{1\le k\le m} \frac{z^k}{k} \right)  ,
\end{equation}
for $|z|<1$. 
Cauchy's residue theorem yields
$$p(n,m)= \frac{1}{2 \pi i} \int_{|z|=1/2} D_m(z) \frac{\mathrm{d}z}{z^{n+1}}.$$
Stretching the contour to $|z|=R>1$ leaves a residue of $e^{-H_m}$ from the pole at $z=1$. 
When $|z|=R$ we have
\begin{equation*}
\bigl|D(z)\bigr| \le \frac{1}{R-1} \exp\left(\sum_{1\le k \le m} \frac{R^k }{k} \right),
\end{equation*}
hence
\begin{equation*}
\bigl|p(n,m)-e^{-H_m}\bigr| \le \frac{1}{R-1}  \exp\left(\sum_{1\le k \le m} \frac{R^k }{k} \right) \frac{1}{R^{n}}.
\end{equation*}
The remainder of the proof is identical to that of Proposition \ref{rsm} following the estimate \eqref{rcd}.
\end{proof}

The following result is a somewhat weaker version of a recent result by Manstavi\v{c}ius and Petuchovas \cite[Theorem 1.1]{Man}.
\begin{proposition}[Manstavi\v{c}ius and Petuchovas]\label{Mansta}
Let $u=n/m$. For $n>m\ge \sqrt{n \log n}$ we have
$$ p(n,m) = e^{\gamma-H_m} \omega(u) \left(1+O\left(\frac{u^{-u}}{m}\right)\right).$$
\end{proposition}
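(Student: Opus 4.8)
The plan is to deduce Proposition~\ref{Mansta} directly from \cite[Theorem~1.1]{Man}. That theorem supplies a uniform asymptotic formula for $p(n,m)$ over a range of $m$ which contains $\sqrt{n\log n}\le m<n$; its main term agrees, after elementary manipulations, with $e^{\gamma-H_m}\,\omega(u)$, and its error term is genuinely power-saving in $u$, of size at most a constant multiple of $u^{-u}/m$ relative to the main term throughout the range in question. Thus the assertion here is, as indicated, a weakened reformulation of the Manstavi\v{c}ius--Petuchovas result, and what remains is only to reconcile normalizations and trace the error term through. A self-contained proof is of course conceivable --- for instance via the saddle-point method applied to $D_m(z)$ in \eqref{Dgen}, or via a Buchstab-type recursion for $p(n,m)$ compared against the equation defining $\omega$, in the spirit of Lemma~\ref{fer} --- but this is precisely the analysis that \cite{Man} already carries out, and there is no reason to repeat it.

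Concretely, I would first record \cite[Theorem~1.1]{Man} in the exact form stated there and match its main term with $e^{\gamma-H_m}\omega(u)$. If that normalization involves $\log m$ in place of $H_m$, a Gamma-factor, or a slightly different special function, the conversion rests on $H_m=\log m+\gamma+O(1/m)$ together with standard estimates for Buchstab's function. One must be careful here: since $\omega(u)\to e^{-\gamma}$ does not decay and in fact $1/2\le\omega(u)\le1$ for $u>1$, the main term is of exact order $1/m$, so a \emph{relative} error of size $1/m$ would already be too crude to yield the claimed $u^{-u}/m$ once $u$ is large. Hence any such discrepancy must be absorbed into the error term of \cite{Man}, not into the one asserted here; I would verify that \cite[Theorem~1.1]{Man} does pin down its main term to sufficient precision for this to be legitimate.

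Second, I would convert whatever error term \cite{Man} provides into the multiplicative shape $e^{\gamma-H_m}\omega(u)\cdot O(u^{-u}/m)$. Because $p(n,m)\asymp 1/m$ in this range, an additive error of order $u^{-u}/m^{2}$ is exactly what is needed. The hypothesis $m\ge\sqrt{n\log n}$ enters through the inequality $u=n/m\le\sqrt{n/\log n}$, which is what lets one check that $u^{-u}/m$ dominates the remaining quantities in the \cite{Man} bound; the finitely many pairs $(n,m)$ with $n$ below a fixed threshold are absorbed into the implied constant.

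The main obstacle is therefore bookkeeping rather than mathematical depth: identifying the precise form of the main and error terms in \cite[Theorem~1.1]{Man} and confirming that the error genuinely saves a power of $u$ (rather than merely a factor $O(1/m)$), since only then does it collapse to the $u^{-u}/m$ recorded above. The description of the present proposition as a ``somewhat weaker version'' of their result indicates that this confirmation goes through.
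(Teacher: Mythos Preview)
Your proposal is correct and matches the paper's own treatment exactly: the paper gives no proof at all, simply stating that this is ``a somewhat weaker version of a recent result by Manstavi\v{c}ius and Petuchovas \cite[Theorem~1.1]{Man}'' and citing it as a black box. Your write-up just makes explicit the bookkeeping (normalization of the main term, conversion of the error) that the paper leaves implicit in that citation.
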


Combining Propositions \ref{psm} and \ref{Mansta} with Lemma \ref{lemom}, we get the following estimate.
\begin{proposition}\label{fullp}
Let $u=n/m$. For $n>m\ge 1$ we have
$$ p(n,m) = e^{\gamma-H_m} \omega(u) \left(1+O\left(\frac{(u/e)^{-u}}{m}\right)\right).$$
\end{proposition}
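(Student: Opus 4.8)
The plan is to glue the two quoted estimates together over the overlapping subranges $1\le m\le n/\log n$ and $\sqrt{n\log n}\le m<n$. These cover all of $1\le m<n$ as soon as $n$ exceeds an absolute constant $n_0$, since the covering condition $n/\log n\ge\sqrt{n\log n}$ is equivalent to $n\ge(\log n)^3$. For the finitely many pairs $(n,m)$ with $n\le n_0$ there is nothing to do: $p(n,m)$, $e^{\gamma-H_m}$, $\omega(u)$ and $(u/e)^{-u}$ are all positive and bounded there, so the estimate holds after enlarging the implied constant.

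On $\sqrt{n\log n}\le m<n$ the statement is immediate from Proposition \ref{Mansta}, which gives $p(n,m)=e^{\gamma-H_m}\omega(u)(1+O(u^{-u}/m))$; since $(u/e)^{-u}=u^{-u}e^{u}\ge u^{-u}$, the error is already of the required shape. So the real work is in the range $1\le m\le n/\log n$. Here I would start from Proposition \ref{psm}, namely $p(n,m)=e^{-H_m}(1+O((u/e)^{-u}m^{-1}))$, and rewrite the main term as $e^{-H_m}=e^{\gamma-H_m}\omega(u)\cdot(e^{\gamma}\omega(u))^{-1}$. Since $\omega$ is continuous and positive on $[1,\infty)$ with $\omega(u)\to e^{-\gamma}>0$, it is bounded away from $0$, so $(e^{\gamma}\omega(u))^{-1}=1+O(|\omega(u)-e^{-\gamma}|)$, and Lemma \ref{lemom} bounds this by $1+O(\exp(-u\log(u\log u)+O(u)))$. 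It then remains to verify that this new error is dominated by the one already present, i.e. that $\exp(-u\log(u\log u)+O(u))\ll(u/e)^{-u}m^{-1}$.

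Taking logarithms, the inequality to check is $\log m\le u\log\log u+O(u)$, and this holds throughout the set $u=n/m\ge\log n$: on it $\log m\le\log n\le u$, so $\log m\le u$, and once $u$ exceeds an absolute constant one has $u\log\log u\ge u\ge\log m$, while if $u$ stays bounded then $n\le e^{u}$ is bounded, hence so is $m$, and $(n,m)$ falls under the trivial case of the first paragraph. Combining the three ingredients gives $p(n,m)=e^{\gamma-H_m}\omega(u)(1+O((u/e)^{-u}m^{-1}))$ on $1\le m\le n/\log n$, which together with the second paragraph finishes the proof.

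The only point requiring care is this last piece of bookkeeping: one must use that the saving in Lemma \ref{lemom} is controlled by $u\log(u\log u)$ rather than merely $u\log u$, so that it comfortably absorbs both the factor $m^{-1}$ and the gap between $u^{-u}$ and $(u/e)^{-u}$. As indicated, this reduces to the elementary estimate $\log m\le u\log\log u+O(u)$ valid when $u\ge\log n$, so I do not anticipate any genuine obstacle; everything else is a routine recombination of Propositions \ref{psm} and \ref{Mansta} with Lemma \ref{lemom}.
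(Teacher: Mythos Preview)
Your proof is correct and follows exactly the route the paper indicates: combining Propositions \ref{psm} and \ref{Mansta} over the overlapping ranges $m\le n/\log n$ and $m\ge\sqrt{n\log n}$, and using Lemma \ref{lemom} to pass from $e^{-H_m}$ to $e^{\gamma-H_m}\omega(u)$ in the first range. The paper records this simply as ``Combining Propositions \ref{psm} and \ref{Mansta} with Lemma \ref{lemom}''; your write-up just fills in the routine bookkeeping.
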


The next estimate is an immediate consequence of Manstavi\v{c}ius \cite[Theorem 3]{Man02}.
It is also follows from Propostion \ref{fullp}.
\begin{proposition}[Manstavi\v{c}ius]\label{pub}
For $n>m\ge 1$, we have 
$$ p(n,m) = \frac{\omega(n/m)}{m}\left(1 +O\left(\frac{1}{m}\right)\right).$$
\end{proposition}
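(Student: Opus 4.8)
The plan is to deduce Proposition \ref{pub} directly from Proposition \ref{fullp}, so that no new analytic input is needed; the substantive work has already been done in the contour estimate behind Proposition \ref{psm} together with the Manstavi\v{c}ius--Petuchovas bound in Proposition \ref{Mansta}. (Alternatively, one could simply invoke Manstavi\v{c}ius \cite[Theorem 3]{Man02}.) Two elementary reductions are required: replacing the factor $e^{\gamma-H_m}$ by $\frac{1}{m}(1+O(1/m))$, and showing that the error term $(u/e)^{-u}/m$ of Proposition \ref{fullp} is itself $O(1/m)$ throughout the range $n>m\ge 1$.

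For the first reduction I would use the classical estimate $H_m=\log m+\gamma+O(1/m)$, which gives
$$ e^{\gamma-H_m}=e^{-\log m+O(1/m)}=\frac{1}{m}\,e^{O(1/m)}=\frac{1}{m}\left(1+O\left(\frac1m\right)\right), $$
since $1/m$ is bounded. Hence the leading term $e^{\gamma-H_m}\omega(u)$ in Proposition \ref{fullp} equals $\frac{\omega(u)}{m}(1+O(1/m))$. For the second reduction I would write $(u/e)^{-u}=\exp\bigl(u(1-\log u)\bigr)$ and observe that $\frac{d}{du}\,u(1-\log u)=-\log u<0$ for $u>1$, so the function $u\mapsto u(1-\log u)$ is decreasing on $[1,\infty)$ with value $1$ at $u=1$; therefore $(u/e)^{-u}\le e$ for every $u\ge 1$, and in particular $(u/e)^{-u}/m=O(1/m)$ for all $n>m\ge 1$.

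Combining these two facts with Proposition \ref{fullp} yields, with $u=n/m$,
$$ p(n,m)=e^{\gamma-H_m}\,\omega(u)\left(1+O\left(\frac{(u/e)^{-u}}{m}\right)\right)=\frac{\omega(n/m)}{m}\left(1+O\left(\frac1m\right)\right), $$
because the three error factors $1+O(1/m)$ collapse to a single $1+O(1/m)$ with an absolute implied constant. I do not anticipate any real obstacle here; the only point demanding a moment's attention is that the bound $(u/e)^{-u}\le e$ must hold uniformly right down to values of $u$ only slightly larger than $1$ (which occur when $n=m+1$), and this is precisely the range in which $(u/e)^{-u}$ attains its largest values.
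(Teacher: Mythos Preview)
Your proposal is correct and follows exactly the route the paper itself indicates: the paper states that Proposition~\ref{pub} is an immediate consequence of \cite[Theorem 3]{Man02} and ``also follows from Proposition~\ref{fullp}'', and you have carried out the latter derivation in full. The two reductions you make---$e^{\gamma-H_m}=\tfrac{1}{m}(1+O(1/m))$ and $(u/e)^{-u}\le e$ for $u\ge 1$---are precisely what is needed and are handled correctly.
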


\begin{lemma}\label{fep}
For $n>m\ge 0$,
$$ n \, p(n,m)=1+\sum_{m<k<n-m} p(k,m) .$$
\end{lemma}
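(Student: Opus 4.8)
The plan is to mirror the proof of Lemma \ref{fer}, but now the situation is cleaner because the exponential generating function for permutations has exact coefficients, so no error term is incurred. First I would recall from \eqref{Dgen} that
$$ D_m(z) = \sum_{n\ge 0} p(n,m) z^n = \exp\left( \sum_{k\ge m+1} \frac{z^k}{k}\right),$$
valid for $|z|<1$, where $p(n,m)$ is the proportion of permutations of $n$ objects with no cycles of length $\le m$. (Here $p(n,m)z^n$ plays the role that $R(n,m)z^n$ played for polynomials; the factor $1/n!$ is already absorbed since $p(n,m)$ is a proportion and the EGF of all permutations is $1/(1-z)$.)

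Next I would differentiate $D_m(z)$ with respect to $z$. Logarithmic differentiation gives
$$ D_m'(z) = D_m(z) \sum_{k\ge m+1} z^{k-1} = D_m(z) \cdot \frac{z^m}{1-z}.$$
Multiplying through by $1-z$ yields the identity $(1-z) D_m'(z) = z^m D_m(z)$. Comparing the coefficient of $z^{n-1}$ on both sides: the left side contributes $n\, p(n,m) - (n-1) p(n-1,m)$, and the right side contributes $p(n-1-m,m)$. This gives the recurrence
$$ n\, p(n,m) = (n-1) p(n-1,m) + p(n-1-m,m).$$
Then I would iterate this recurrence down from $n$ to a base case, or equivalently sum a telescoping series: writing $n\,p(n,m) - (n-1)p(n-1,m) = p(n-1-m,m)$ and summing over the appropriate range of indices collapses the left side to $n\,p(n,m)$ minus a boundary term. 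Since $p(k,m)=0$ for $1\le k\le m$ and $p(0,m)=1$, the boundary term handled correctly produces the constant $1$, and the sum of $p(n-1-m,m)$ over the relevant range becomes $\sum_{m<k<n-m} p(k,m)$ (after reindexing $k = n-1-m-j$ or similar, using again that $p$ vanishes on $[1,m]$ and equals $1$ at $0$). Equivalently, and perhaps more transparently, I would directly extract coefficients from $D_m'(z) = D_m(z) z^m (1+z+z^2+\cdots)$, getting $n\,p(n,m) = \sum_{j\ge 0} p(n-m-1-j, m) \cdot [\text{valid terms}]$, which is exactly $\sum_{k\le n-m-1} p(k,m) = 1 + \sum_{m<k<n-m} p(k,m)$ since only $k=0$ and $k>m$ contribute.

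There is essentially no main obstacle here — this is the permutation analogue of Lemma \ref{fer} with the error terms removed, because the coefficients $I_k$ for polynomials (which satisfy $I_k = q^k/k + O(q^{k/2}/k)$ and force the $O(q^{-n/2})$ and $O(q^{-(m+1)/2}/m)$ terms) are replaced by the exact value $1/k$ for the cycle-length weights of permutations, and the higher powers $z^{kj}$ with $j\ge 2$ that contributed the $O(q^{-(m+1)})$ term are absent from $\sum_{k\ge m+1} z^k/k$. The only point requiring a little care is the bookkeeping of the index range in the final sum: I must verify that the coefficient extraction yields exactly $1 + \sum_{m<k<n-m} p(k,m)$ and not, say, a sum up to $n-m$ or $n-m-1$; this follows from checking that the term $k=n-m$ would correspond to $p(0,m)=1$ appearing with the wrong multiplicity — in fact the constant $1$ in the statement is precisely the $k=0$ contribution $p(0,m)=1$, and the terms $k$ with $1\le k\le m$ as well as $k=n-m,\dots,n-1$ vanish, leaving the stated range $m<k<n-m$.
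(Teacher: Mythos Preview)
Your proposal is correct and follows essentially the same route as the paper: differentiate the generating function $D_m(z)$, obtain $D_m'(z)=D_m(z)\sum_{k\ge m} z^k$, and compare coefficients of $z^{n-1}$ to get $n\,p(n,m)=\sum_{m\le k\le n-1}p(n-1-k,m)=1+\sum_{m<k<n-m}p(k,m)$. The paper performs the direct coefficient extraction (your route (b)) in one line; your intermediate telescoping variant via $(1-z)D_m'(z)=z^m D_m(z)$ is an equivalent repackaging of the same identity.
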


\begin{proof}
From \eqref{Dgen} we have
$$ D'_m(z) = \sum_{n\ge 1} n \, p(n,m)z^{n-1} = D_m(z) \sum_{k\ge m} z^k.$$
Comparing the coefficients of $z^{n-1}$ in the last equation leads to
$$ n\, p(n,m) = \sum_{m\le k \le n-1} p(n-1-k,m) = 1 + \sum_{m<k<n-m} p(k,m) ,$$
for $n> m\ge 0$, since $p(0,m)=1$ and $p(k,m)=0$ for $1\le k \le m$.
\end{proof}

\begin{proposition}\label{rap}
For $n\ge 1, m\ge 1$, 
$$r(n,m)=p(n,m) + O\left( \frac{1}{n q^{n/2}} + \frac{1}{m^2 q^{(m+1)/2}} \right).$$
\end{proposition}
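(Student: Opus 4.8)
The plan is to combine the two recurrences just proved, Lemma~\ref{fer} for $r(n,m)$ and Lemma~\ref{fep} for $p(n,m)$, which have exactly the same shape. Put $\delta_n:=r(n,m)-p(n,m)$; subtracting Lemma~\ref{fep} from Lemma~\ref{fer} gives, for $n>m$,
$$ n\delta_n=\sum_{m<k<n-m}\delta_k+E_n,\qquad |E_n|\le c_1\!\left(q^{-n/2}+\frac{q^{-(m+1)/2}}{m}\right), $$
where $c_1$ is the implied constant of Lemma~\ref{fer}. Two features are decisive: the constant terms $1$ on the two right-hand sides cancel, and $\delta_0=r(0,m)-p(0,m)=1-1=0$, so $\delta_k=0$ for all $0\le k\le m$. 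Hence $\delta$ obeys the \emph{homogeneous} version of the recurrence, perturbed only by the small source $E_n$, and the whole task is to show that this source cannot drive $|\delta_n|$ above $O\!\big(n^{-1}q^{-n/2}+m^{-2}q^{-(m+1)/2}\big)$.

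For $1\le n\le m$ we have $\delta_n=0$, and for $m<n\le 2m+1$ the index set $\{k:m<k<n-m\}$ is empty, so $\delta_n=E_n/n$ and, using $n>m$,
$$ |\delta_n|\le\frac{c_1q^{-n/2}}{n}+\frac{c_1q^{-(m+1)/2}}{nm}\ll\frac{1}{nq^{n/2}}+\frac{1}{m^2q^{(m+1)/2}}. $$
For $n>2m+1$ I would run a single strong induction with the ansatz $|\delta_n|\le c_1 q^{-n/2}/n+A'q^{-(m+1)/2}/m^2$, with $A'$ an absolute constant to be fixed at the end. Feeding the inductive hypothesis into the recurrence, bounding the decaying part by the geometric estimate $\sum_{k>m}k^{-1}q^{-k/2}\le c_2 m^{-1}q^{-(m+1)/2}$ with $c_2=(1-q^{-1/2})^{-1}<4$, and noting that the inner sum has only $n-2m-1$ terms, one obtains after dividing by $n$
$$ |\delta_n|\le\frac{c_1q^{-n/2}}{n}+\frac{q^{-(m+1)/2}}{m^2}\cdot\frac{(c_2+1)c_1 m+A'(n-2m-1)}{n}. $$
The last fraction is $\le A'$ exactly when $A'(2m+1)\ge(c_2+1)c_1 m$, so any $A'\ge 3c_1$ suffices for all $m\ge1$. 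The slack $A'(2m+1)/n$ produced by the $2m+1$ ``missing'' summands is precisely what absorbs both the geometric tail and the persistent part $m^{-1}q^{-(m+1)/2}$ of $E_n$; this reproduces the ansatz, closing the induction, and Proposition~\ref{rap} follows.

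The step I expect to be the genuine obstacle is the choice of the inductive bound rather than the estimates. A naive bound of the form $|\delta_n|\ll m^{-2}q^{-(m+1)/2}$, constant in $n$, does not close: its $n-2m-1$ copies, summed and divided by $n$, return essentially the same quantity and leave no room for $E_n$, so the constant would grow at each step. The extra $q^{-n/2}/n$ term is needed as well, to handle $n$ close to $m$, where in fact $|\delta_n|\asymp m^{-1}q^{-(m+1)/2}$ rather than $m^{-2}q^{-(m+1)/2}$. Identifying the correct two-term shape, and that the usable gain in the induction is exactly the factor $(2m+1)/n$ lost through truncating the summation range, is what makes the argument work; everything else is routine. (One may instead pass first to a nonnegative majorant $\delta_n^{+}$ solving the recurrence with $|E_n|$ in place of $E_n$, but this is merely cosmetic.)
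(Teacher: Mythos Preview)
Your argument is correct and essentially identical to the paper's: both subtract Lemma~\ref{fep} from Lemma~\ref{fer}, obtain the homogeneous recurrence with source $E_n$, and close a strong induction with the two-term ansatz $c_1 q^{-n/2}/n + A\, q^{-(m+1)/2}/m^2$, using that the truncated sum has only $n-2m-1$ terms to absorb the source. The paper organizes the induction by the integer $j$ with $n/j\le m<n$ rather than directly on $n$, and carries the explicit constant $4B$ in place of your $3c_1$, but the mechanism and the key observation (the gain of $(2m+1)/n$ from the missing summands) are the same.
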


\begin{proof}
If $m\ge n \ge 1$, then $r(n,m)=p(n,m)=0$. Hence we may assume $n>m\ge 1$.
Let $s(n,m)=r(n,m)-p(n,m).$ Lemmas \ref{fer} and \ref{fep} imply
\begin{equation}\label{qrec}
  |s(n,m)| \le \frac{B}{nq^{n/2}}+ \frac{B}{nm q^{(m+1)/2}} + \frac{1}{n} \sum_{m<k<n-m} |s(k,m)|, 
\end{equation}
where $B$ is the implied constant in the error term of Lemma \ref{fer}. 
When $n/2 \le m <n$, the last sum is empty and we have
\begin{equation}\label{ind}
 |s(n,m)|  \le  \frac{B}{nq^{n/2}} + \frac{4B}{m^2 q^{(m+1)/2}}.
\end{equation}
Now assume that \eqref{ind} holds for all $n\ge 1$ and $n/j \le m <n$, for some $j\ge 2$.
Let $n/(j+1) \le m < n$. Then $n\le m(j+1)$ and $k<n-m$ imply $k<mj$. Hence \eqref{qrec} 
and the inductive hypothesis show that 
\begin{equation*}
\begin{split}
|s(n,m)| & \le  \frac{B}{nq^{n/2}} + \frac{B}{nm q^{(m+1)/2}} 
+ \frac{1}{n} \sum_{m<k<n-m}  \left(\frac{B}{k q^{k/2}} + \frac{4B}{m^2 q^{(m+1)/2}}\right) \\
& \le  \frac{B}{nq^{n/2}}+ \frac{4B}{m^2 q^{(m+1)/2}} \left(\frac{m}{4n}+  \frac{m}{4n}\sum_{k\ge 0} q^{-k/2} + \frac{n-2m}{n}\right) \\
&  \le \frac{B}{nq^{n/2}}+ \frac{4B}{m^2 q^{(m+1)/2}},
\end{split}
\end{equation*}
since $\sum_{k\ge 0} q^{-k/2} \le \sum_{k\ge 0} 2^{-k/2} < 4$.  
Thus \eqref{ind} holds for all $n\ge 1$ and all $m$ with $0<m<n$.
\end{proof}

\begin{proposition}\label{cqh}
For $m\ge 1$ we have
$$ \lambda_q(m)=e^{-H_m}  \left(1+O\left(\frac{1}{m q^{(m+1)/2}}\right)\right)
= \frac{e^{-\gamma}}{m} \left(1+O\left(\frac{1}{m}\right)\right).$$
\end{proposition}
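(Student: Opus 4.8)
The plan is to estimate $\log\lambda_q(m)=\sum_{k=1}^{m}I_k\log\bigl(1-q^{-k}\bigr)$ directly. Expanding $\log(1-q^{-k})=-\sum_{j\ge 1}q^{-kj}/j$ and collecting the terms according to the value of $n=kj$, I would rewrite
$$\log\lambda_q(m)=-\sum_{n\ge 1}\frac{1}{n\,q^{n}}\sum_{\substack{k\mid n\\ k\le m}}k\,I_k .$$
The arithmetic input that makes this work is the identity $\sum_{d\mid n}d\,I_d=q^{n}$, which is simply the logarithmic-derivative form of $\prod_{k\ge 1}(1-z^k)^{-I_k}=(1-qz)^{-1}$ (the $m=0$ instance of the product $F_m$ from the proof of Proposition~\ref{rsm}, and already exploited implicitly in the proof of Lemma~\ref{fer}).

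For $n\le m$ the inner sum is the full divisor sum, equal to $q^{n}$, so those terms contribute exactly $-\sum_{n\le m}1/n=-H_m$. Hence
$$\log\lambda_q(m)+H_m=-\sum_{n>m}\frac{1}{n\,q^{n}}\sum_{\substack{k\mid n\\ k\le m}}k\,I_k ,$$
and the right-hand side is nonpositive. For the lower bound, note that when $n>m$ the divisor $k=n$ itself exceeds $m$, so $\sum_{k\mid n,\ k\le m}k\,I_k\le q^{n}-n\,I_n<2q^{n/2}$ by \eqref{In}. Therefore
$$0\le -\bigl(\log\lambda_q(m)+H_m\bigr)\le 2\sum_{n>m}\frac{q^{-n/2}}{n}\le \frac{2}{m+1}\sum_{n>m}q^{-n/2}\ll \frac{1}{m\,q^{(m+1)/2}},$$
uniformly in $q\ge 2$, since $\sum_{n>m}q^{-n/2}=q^{-(m+1)/2}/(1-q^{-1/2})\le q^{-(m+1)/2}/(1-2^{-1/2})$. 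As the quantity $\log\lambda_q(m)+H_m$ is bounded (indeed it tends to $0$), exponentiating yields $\lambda_q(m)=e^{-H_m}\bigl(1+O(1/(m\,q^{(m+1)/2}))\bigr)$, the first claimed estimate.

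For the second estimate I would invoke the classical $H_m=\log m+\gamma+O(1/m)$, which gives $e^{-H_m}=e^{-\gamma}m^{-1}e^{O(1/m)}=e^{-\gamma}m^{-1}\bigl(1+O(1/m)\bigr)$; composing with the first estimate and absorbing the term $1/(m\,q^{(m+1)/2})\le 1/m$ finishes the proof. I do not expect a real obstacle here: the whole argument rests on recognizing that $\sum_{d\mid n}d\,I_d=q^{n}$ forces the ``$n\le m$'' part of the series to collapse exactly to $-H_m$, after which only the routine tail bound via \eqref{In} remains. The only points needing a little care are justifying the interchange of summation (absolute convergence for $q\ge 2$) and checking that every implied constant is genuinely absolute, i.e. independent of $q$.
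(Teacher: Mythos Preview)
Your argument is correct, and it is genuinely different from the route the paper takes. The paper does not estimate $\log\lambda_q(m)$ directly; instead it observes that $\lambda_q(m)=\lim_{n\to\infty}r(n,m)$ (from Proposition~\ref{rsm}) and $e^{-H_m}=\lim_{n\to\infty}p(n,m)$ (from Proposition~\ref{psm}), and then lets $n\to\infty$ in the comparison $r(n,m)=p(n,m)+O\bigl(n^{-1}q^{-n/2}+m^{-2}q^{-(m+1)/2}\bigr)$ of Proposition~\ref{rap}. So in the paper, Proposition~\ref{cqh} is a corollary of three earlier results, one of which (Proposition~\ref{rap}) is established by an induction that compares the functional equations for $r$ and $p$.

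Your approach is more elementary and fully self-contained: the identity $\sum_{d\mid n}d\,I_d=q^n$ makes the terms with $n\le m$ collapse exactly to $-H_m$, and the tail $n>m$ is handled by the single inequality $q^n-nI_n<2q^{n/2}$ from~\eqref{In}. This avoids any appeal to the asymptotics of $r(n,m)$ or $p(n,m)$, and in particular does not rely on the Manstavi\v{c}ius--Petuchovas input that sits behind Proposition~\ref{rap} in the paper's logical structure. The trade-off is that the paper's machinery is needed anyway for Theorem~\ref{thm2}, so once Propositions~\ref{rsm}, \ref{psm}, \ref{rap} are in place, Proposition~\ref{cqh} is essentially free; your proof, by contrast, would stand on its own even if one only cared about $\lambda_q(m)$.
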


\begin{proof}
Proposition \ref{rsm} implies that 
$ \lim_{n\to \infty} r(n,m) = \lambda_q(m)$, 
while Proposition \ref{psm} (or \cite[Ex. IV.9]{FS}) shows that $\lim_{n\to \infty} p(n,m) = e^{-H_m}$.
Letting $n\to \infty $ in Proposition \ref{rap} gives the first equation. 
The second equation follows from the well-known estimate $H_m= \log m + \gamma + O(m^{-1})$.
\end{proof}

\section{Proof of Theorem \ref{thm3}}

Let $\mathcal{A}(n,m)$ denote the set of monic polynomials $F$ of degree $n$ over $\mathbb{F}_q$ with the property that the set of degrees of divisors of $F$ has no gaps of size greater than $m$. We have $f(n,m)= q^{-n} |\mathcal{A}(n,m)| $. 

\begin{lemma}\label{cond}
A monic polynomial $F$ of degree $n$, 
$F=P_1 P_2 \cdots P_j$, where the $P_i$ are monic irreducible polynomials with $\deg(P_1) \le \deg(P_2) \le \ldots \le \deg(P_j)$,
satisfies $F \in \mathcal{A}(n,m)$ if and only if 
\begin{equation}\label{conde}
\deg(P_i) \le \deg(P_1\cdots P_{i-1}) + m \qquad (1\le i \le j).
\end{equation}
\end{lemma}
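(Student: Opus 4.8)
The plan is to characterize membership in $\mathcal{A}(n,m)$ by analyzing how the set of degrees of divisors $A_1(F)$ is built up as we multiply the irreducible factors one at a time, in order of nondecreasing degree. Write $d_i = \deg(P_i)$, so $d_1 \le d_2 \le \cdots \le d_j$, and set $N_i = \deg(P_1 \cdots P_i) = d_1 + \cdots + d_i$ (with $N_0 = 0$, $N_j = n$). Let $B_i = A_1(P_1 \cdots P_i) = \{\sum_{k\le i} \varepsilon_k d_k : \varepsilon_k \in \{0,1\}\}$, so that $B_0 = \{0\}$ and $B_j = A_1(F)$. The key structural fact is the recursion $B_i = B_{i-1} \cup (B_{i-1} + d_i)$.

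First I would prove the "only if" direction: if $F \in \mathcal{A}(n,m)$, then \eqref{conde} holds. Suppose for contradiction that $d_i > N_{i-1} + m$ for some $i$. Since $d_i \le d_{i+1} \le \cdots \le d_j$, every factor with index $\ge i$ has degree $> N_{i-1} + m$. I claim the full set $A_1(F) = B_j$ then has a gap of size $> m$ straddling the interval $(N_{i-1}, N_{i-1} + d_i)$: indeed $N_{i-1} \in B_j$ (take all $\varepsilon_k = 1$ for $k < i$ and the rest $0$), but $B_j$ contains no integer strictly between $N_{i-1}+1$ and $d_i - 1$ — wait, more carefully: any element of $B_j$ is a sum of a subset of $\{d_1,\dots,d_j\}$; if the subset uses no $d_k$ with $k\ge i$, it is at most $N_{i-1}$; if it uses some $d_k$ with $k\ge i$, it is at least $d_k \ge d_i > N_{i-1}+m$. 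Hence $(N_{i-1}, N_{i-1}+m] \cap \mathbb{Z}$ contains no element of $B_j$ while $N_{i-1}, N_{i-1}+d_i \in B_j$ with $N_{i-1} + d_i \le n$; this forces a gap exceeding $m$, contradicting $F \in \mathcal{A}(n,m)$.

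Next the "if" direction: assuming \eqref{conde}, I would show by induction on $i$ that $B_i$ has no gaps of size $> m$ within $[0, N_i]$ — i.e., consecutive elements of $B_i$ differ by at most $m$, and $B_i$ contains $0$ and $N_i$. The base case $B_0 = \{0\}$, $N_0 = 0$ is trivial. For the inductive step, $B_i = B_{i-1} \cup (B_{i-1} + d_i)$. By hypothesis $B_{i-1} \subseteq [0, N_{i-1}]$ has gaps $\le m$ and contains $0$ and $N_{i-1}$; the translate $B_{i-1} + d_i \subseteq [d_i, N_i]$ likewise has internal gaps $\le m$ and contains $d_i$ and $N_i$. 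The two pieces overlap or abut in the right way precisely because $d_i \le N_{i-1} + m$: the largest element of $B_{i-1}$ is $N_{i-1}$ and the smallest element of the translate is $d_i$, and the jump from $N_{i-1}$ to $d_i$ is $d_i - N_{i-1} \le m$ (if $d_i > N_{i-1}$), or the two blocks genuinely overlap (if $d_i \le N_{i-1}$); either way the union $B_i$ has all gaps $\le m$ on $[0,N_i]$ and contains its endpoints $0$ and $N_i$. Taking $i = j$ gives $A_1(F) = B_j$ has no gaps $> m$ on $[0,n]$, i.e. $F \in \mathcal{A}(n,m)$.

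I expect the main obstacle to be purely bookkeeping: stating cleanly what "no gaps of size greater than $m$" means for a finite subset of $[0,N]$ containing $0$ and $N$ (namely, sorted consecutive differences all $\le m$), and then verifying that this property is preserved under the operation $B \mapsto B \cup (B + d)$ exactly when $d \le (\max B) + m$. Once that lemma about unions of translates is isolated, both directions of the equivalence follow immediately by induction, and the ordering hypothesis $d_1 \le \cdots \le d_j$ is used only to guarantee that in the "only if" direction a single bad index $i$ makes all later factors too large to fill the gap.
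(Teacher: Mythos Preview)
Your proposal is correct and follows essentially the same approach as the paper: the ``only if'' direction locates a gap $(N_{i-1},d_i)$ when \eqref{conde} fails, and the ``if'' direction is induction on the number of factors using the recursion $B_i = B_{i-1}\cup(B_{i-1}+d_i)$ together with $d_i \le N_{i-1}+m$. Your write-up is somewhat more explicit than the paper's in justifying why no divisor degree lands in the gap (you spell out the dichotomy based on whether the subset uses any $d_k$ with $k\ge i$), but the argument is the same.
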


\begin{proof}
If $\deg(P_i) > \deg(P_1\cdots P_{i-1}) + m$ for some $1\le i \le j$, then the open interval $ ( \deg(P_1\cdots P_{i-1}) , \deg(P_i))$ is of length $>m$ and contains no degrees of divisors of $F$. Thus \eqref{conde} is a necessary condition for $F \in \mathcal{A}(n,m)$. To see that  \eqref{conde} is sufficient, we use induction on $j$. The case $j=1$ is obvious. Assume that \eqref{conde} implies $F \in \mathcal{A}(n,m)$ for some $j\ge 1$. Let
$ F=P_1 P_2 \cdots P_j P_{j+1}$ satisfy \eqref{conde}, with $j$ replaced by $j+1$. The set of degrees of divisors of $F$ is $D \cup (D+\deg(P_{j+1}))$, where $D$ is the set of degrees of divisors of $P_1 P_2 \cdots P_j $. Now $D$ has no gaps greater than $m$ by the inductive hypothesis and $\deg(P_{j+1})\le m+ \max D $ by \eqref{conde}. Thus  $D \cup (D+\deg(P_{j+1}))$ also has no gaps greater than $m$ and $F \in \mathcal{A}(n,m)$.
\end{proof}

\begin{lemma}\label{fe}
For $n\ge 0$, $m\ge 0$ we have
$$ 1=\sum_{0\le k \le n} f(k,m) \, r(n-k,k+m)  .$$
\end{lemma}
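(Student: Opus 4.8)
The plan is to prove the identity combinatorially by classifying every monic polynomial $F$ of degree $n$ according to the largest divisor $G$ of $F$ that lies in $\mathcal{A}(\deg G, m)$. More precisely, write $F=P_1P_2\cdots P_j$ with $\deg P_1\le \deg P_2\le\cdots\le\deg P_j$, and let $i$ be the largest index for which $G:=P_1\cdots P_i$ satisfies condition \eqref{conde} of Lemma \ref{cond}; equivalently, $i$ is the largest index with $P_1\cdots P_i\in\mathcal{A}(\deg(P_1\cdots P_i),m)$, and we set $G=1$, $i=0$, when $P_1$ already violates the condition (i.e. when $\deg P_1>m$). Put $k=\deg G$, so $0\le k\le n$, and $H:=P_{i+1}\cdots P_j=F/G$, a polynomial of degree $n-k$. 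I claim that the map $F\mapsto(G,H)$ is a bijection between monic polynomials of degree $n$ and pairs $(G,H)$ with $G\in\mathcal{A}(k,m)$ (any $0\le k\le n$) and $H$ a monic polynomial of degree $n-k$ all of whose irreducible factors have degree $>k+m$, i.e.\ $H$ is $(k+m)$-rough. Summing $1$ over all $F$ of degree $n$ and dividing by $q^n$ then gives exactly $\sum_{0\le k\le n} f(k,m)\,r(n-k,k+m)$, which is the asserted identity. Here one uses $f(0,m)=1$, $r(0,m')=1$, and the convention that the empty product is the constant polynomial $1$.

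The key steps are as follows. First I would verify the map is well defined: given $F$, the maximal $i$ exists and is unique because the empty product $1$ trivially lies in $\mathcal{A}(0,m)$, so the set of valid indices is nonempty and finite. Second, I would check that every irreducible factor $P$ of $H$ has $\deg P>k+m$: if $P_{i+1}$ (the smallest-degree factor of $H$) had $\deg P_{i+1}\le \deg G+m=k+m$, then $P_1\cdots P_{i+1}$ would still satisfy \eqref{conde} — the degrees are sorted, so the conditions for indices $\le i$ are inherited from $G\in\mathcal{A}(k,m)$ via Lemma \ref{cond}, and the condition for index $i+1$ is precisely $\deg P_{i+1}\le\deg G+m$ — contradicting maximality of $i$; and since $\deg P_{i+1}\le\deg P_{i+2}\le\cdots$, all remaining factors also have degree $>k+m$. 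Third, for the inverse map: given $G\in\mathcal{A}(k,m)$ and a $(k+m)$-rough $H$ of degree $n-k$, form $F=GH$; sorting the irreducible factors of $F$ by degree, the factors of $G$ come first (each has degree $\le k\le k+m<\deg P$ for every factor $P$ of $H$, unless $\deg P\le k$, but roughness forbids that), $G$ itself satisfies \eqref{conde} by Lemma \ref{cond}, and appending any factor of $H$ breaks \eqref{conde} because such a factor has degree $>k+m=\deg G+m$. Hence $G$ is exactly the maximal prefix, and the two maps are mutually inverse.

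The main obstacle I anticipate is handling the sorting of irreducible factors cleanly when $G$ and $H$ have factors of equal degree straddling the cut, and more generally making the "maximal prefix $G$" argument airtight: I must ensure that the definition of $i$ via \emph{sorted} factors does not depend on how ties among equal-degree irreducibles are ordered, and that Lemma \ref{cond}'s criterion \eqref{conde} really is equivalent to membership in $\mathcal{A}$ for the prefix product regardless of tie-breaking. This is resolved by noting that \eqref{conde} only involves the multiset of degrees $\deg P_1\le\cdots\le\deg P_j$ (with each $\deg(P_1\cdots P_{i-1})$ a partial sum of this sorted sequence), not the polynomials themselves, so it is tie-order independent; and the roughness condition on $H$ (all factor degrees $>k+m>k\ge\deg$ of any factor of $G$) forces a clean separation of the degree multiset, so the factors of $G$ genuinely occupy the initial segment. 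Once these bookkeeping points are settled, the bijection — and hence the identity — follows, with no estimation required.
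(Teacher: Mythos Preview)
Your proposal is correct and follows essentially the same approach as the paper: both split each monic $F$ at the unique index $i_0$ where the sorted prefix $P_1\cdots P_{i_0}$ lies in $\mathcal{A}(\cdot,m)$ while $\deg P_{i_0+1}>\deg(P_1\cdots P_{i_0})+m$, and then count pairs $(G,H)$ with $G\in\mathcal{A}(k,m)$ and $H$ being $(k+m)$-rough. Your extra care about tie-breaking is not strictly needed (since every irreducible factor of $H$ has degree $>k+m\ge k\ge$ the degree of any factor of $G$, no ties can straddle the cut), but it does no harm.
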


\begin{proof}
Lemma \ref{cond} implies that each monic polynomial $F$ of degree $n$ factors uniquely as $F=QR$, where $F=P_1 P_2 \cdots P_j$,  the $P_i$ are monic irreducible polynomials with $\deg(P_1) \le \deg(P_2) \le \ldots \le \deg(P_j)$, 
$$ Q=P_1\cdots P_{i_0} , \quad R=P_{i_0+1}\cdots P_j ,$$
and $i_0$ is the unique index such that
$$ Q \in \mathcal{A}(\deg(Q),m), \quad \deg{P_{i_0+1}}>\deg(Q)+m.$$
The case $i_0=0$ corresponds to $Q=1$ and $R=F$, while $i_0=j$ means $Q=F\in  \mathcal{A}(n,m)$ and $R=1$.
Given $\deg(Q)=k$, there are $q^{k} f(k,m)$ choices for $Q$ and $q^{n-k}r(n-k,k+m)$ choices for $R$. Counting all of the $q^n$ monic polynomials of degree $n$ according to the degree of $Q$, we find that
$$ q^n = \sum_{0\le k \le n}q^{k}  f(k,m) \, q^{n-k}r(n-k,k+m),$$
from which the result follows.
\end{proof}

\begin{lemma}\label{fe2}
For $n\ge 0$, $m\ge 0$ we have
$$ 1= f(n,m)+\sum_{0\le k < \frac{n-m}{2}} f(k,m) \, r(n-k,k+m) .$$
\end{lemma}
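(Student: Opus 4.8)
The plan is to obtain Lemma \ref{fe2} directly from Lemma \ref{fe} by discarding the terms that vanish. First I would isolate the top term $k=n$ in the identity
$$1=\sum_{0\le k\le n} f(k,m)\, r(n-k,k+m)$$
of Lemma \ref{fe}. Since $r(0,k+m)=1$ for every $k\ge 0$ and $m\ge 0$ (by the convention $r(0,\cdot)=1$: the constant polynomial $1$ is the only monic polynomial of degree $0$ and has no non-constant divisors), the $k=n$ term contributes exactly $f(n,m)$.

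Next I would show that, in the remaining range $0\le k\le n-1$, the summand $f(k,m)\, r(n-k,k+m)$ vanishes unless $k<\frac{n-m}{2}$. The key fact, already used in the proof of Lemma \ref{fer}, is that $r(a,b)=0$ whenever $1\le a\le b$: a monic polynomial of positive degree $a$ has an irreducible factor of degree at most $a\le b$, so it cannot be $b$-rough. Applying this with $a=n-k$ and $b=k+m$ shows $r(n-k,k+m)=0$ as soon as $1\le n-k$ and $n-k\le k+m$; the first inequality holds for every $k\le n-1$, while the second is equivalent to $k\ge\frac{n-m}{2}$. Hence every term with $\frac{n-m}{2}\le k\le n-1$ drops out, and Lemma \ref{fe} collapses to the asserted identity.

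I do not expect any genuine obstacle here; the only point requiring a moment's care is the boundary value $k=\frac{n-m}{2}$ when $n-m$ is even, where one still has $n-k=k+m\ge 1$ (since $k\le n-1$ forces $n-k\ge 1$), so $r(n-k,k+m)=0$ and excluding this value from the sum is correct. For completeness I would also note the degenerate cases: when $n\le m$ (in particular $n=0$) the sum over $0\le k<\frac{n-m}{2}$ is empty and the identity reduces to $1=f(n,m)$, which is consistent since then the set of degrees of divisors of a polynomial of degree $n\le m$ has no gap exceeding $m$, so $f(n,m)=1$.
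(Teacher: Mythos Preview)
Your argument is correct and matches the paper's own proof exactly: deduce the identity from Lemma~\ref{fe} using $r(0,n+m)=1$ for the $k=n$ term and $r(n-k,k+m)=0$ whenever $\frac{n-m}{2}\le k<n$. Your additional remarks on the boundary value $k=\frac{n-m}{2}$ and the degenerate case $n\le m$ are accurate and add clarity, but are not needed beyond what the paper records.
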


\begin{proof}
This follows from Lemma \ref{fe} since $r(0,n+m)=1$ and $r(n-k,k+m)=0$ if $\frac{n-m}{2} \le k < n$. 
\end{proof}

\begin{lemma}\label{se}
For $m\ge 0$ we have
$$ 1=\sum_{k\ge 0} f(k,m) \, \lambda_q(k+m).$$
\end{lemma}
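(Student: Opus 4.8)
The plan is to deduce Lemma \ref{se} by letting $n\to\infty$ in Lemma \ref{fe2} (equivalently, in Lemma \ref{fe}). Three facts are needed. (i) For each fixed $M$, $r(N,M)\to\lambda_q(M)$ as $N\to\infty$; this is immediate from Proposition \ref{rsm}, since $u=N/M\to\infty$ makes the error term vanish. (ii) There is an absolute constant $C$ with $r(N,M)\le C\lambda_q(M)$ for all integers $N\ge 1$, $M\ge 1$: Theorem \ref{thm2} gives $r(N,M)=\lambda_q(M)e^{\gamma}\omega(N/M)\bigl(1+O\bigl((u/e)^{-u}/M\bigr)\bigr)$ with absolute implied constant, and both $\omega(u)$ and $(u/e)^{-u}$ are bounded for $u\ge 1$ (the first by Lemma \ref{lemom}), while $r(N,M)=0$ for $1\le N\le M$. (iii) $\lambda_q(j)\gg 1/j$ by Proposition \ref{cqh}, so $\sum_j\lambda_q(j)$ diverges.

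Step 1: the series $S:=\sum_{k\ge 0}f(k,m)\lambda_q(k+m)$ converges with $S\le 1$. Since every term in Lemma \ref{fe} is nonnegative, for fixed $K$ and all $n\ge K$ we have $\sum_{0\le k\le K}f(k,m)\,r(n-k,k+m)\le\sum_{0\le k\le n}f(k,m)\,r(n-k,k+m)=1$; letting $n\to\infty$ in the finite left-hand sum and using (i) gives $\sum_{0\le k\le K}f(k,m)\lambda_q(k+m)\le 1$, hence $S\le 1$.

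Step 2: $f(n,m)\to 1-S$. Write Lemma \ref{fe2} as $1=f(n,m)+\sum_{0\le k\le n-1}f(k,m)\,r(n-k,k+m)$, which is legitimate because $r(n-k,k+m)=0$ for $(n-m)/2\le k\le n-1$. For each fixed $k$ the $k$-th summand is $\le C\,f(k,m)\lambda_q(k+m)$ for every $n$, a summable majorant by Step 1, and by (i) it converges to $f(k,m)\lambda_q(k+m)$ as $n\to\infty$; dominated convergence therefore yields $\sum_{0\le k\le n-1}f(k,m)\,r(n-k,k+m)\to S$, so $f(n,m)\to 1-S$.

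Step 3: $S=1$, which is the assertion. If $S<1$, then by Step 2 we have $f(n,m)\ge(1-S)/2$ for all large $n$, so $\sum_{k\ge k_0}f(k,m)\lambda_q(k+m)\ge\frac{1-S}{2}\sum_{k\ge k_0}\lambda_q(k+m)$ diverges by (iii), contradicting Step 1. The step I expect to be the main obstacle is the presence of the diagonal term $f(n,m)$ (the term $k=n$ in Lemma \ref{fe}): it does not tend to $0$ for any elementary reason, and I would deliberately avoid trying to prove $f(n,m)\to 0$ directly. Instead, Step 3 turns this around, using the divergence of $\sum_j\lambda_q(j)$ against the convergence of $S$ to force $f(n,m)\to 0$, and hence $S=1$, a posteriori.
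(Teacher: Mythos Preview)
Your proof is correct. Both your argument and the paper's pass to the limit $n\to\infty$ in Lemma~\ref{fe2} and exploit the divergence of $\sum_k \lambda_q(k+m)\asymp\sum_k 1/(k+m)$, but the executions differ. The paper first uses Corollary~\ref{cora} to conclude that $\sum_k f(k,m)/(k+m)$ converges, hence $\liminf_k f(k,m)=0$; it then splits the sum in Lemma~\ref{fe2} at $k=n/\log n$, applies Corollary~\ref{corb} on the initial segment, and passes to the limit along a subsequence where $f(n,m)\to 0$. You instead use the uniform bound $r(N,M)\le C\lambda_q(M)$ from Theorem~\ref{thm2} to run dominated convergence on the whole sum, obtaining the genuine limit $f(n,m)\to 1-S$; then the divergence of $\sum_k\lambda_q(k+m)$ forces $S=1$ by contradiction. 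Your route avoids the somewhat ad hoc splitting at $n/\log n$ and yields the extra information that $\lim_n f(n,m)$ exists (not just the $\liminf$); the paper's route, on the other hand, needs only the weaker inputs $r\asymp\lambda_q$ (Corollary~\ref{cora}) and $r=\lambda_q(1+o(1))$ for $k\le n/\log n$ (Corollary~\ref{corb}), rather than the full multiplicative bound $r\le C\lambda_q$ uniformly. One cosmetic point: your facts (i) and (ii) are stated for $M\ge 1$, so the case $m=0$ (where $k=0$ gives $M=0$) is not literally covered; it is of course trivial since $f(k,0)=0$ for $k\ge 1$ and $r(n,0)=\lambda_q(0)=1$, exactly as the paper notes.
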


\begin{proof}
If $m=0$, we have $f(0,0)=1$, $\lambda_q(0)=1$ and $f(k,0)=0$ for $k\ge 1$. Now fix any $m\ge 1$. We have $r(n-k,k+m) \asymp (k+m)^{-1}$ for $n-k>k+m$, by Corollary \ref{cora}. Lemma \ref{fe2} implies that $\sum_{0 \le k < \frac{n-m}{2}} \frac{f(k,m)}{k+m}$ is bounded from above as $n\to \infty$ and hence  $\sum_{k\ge 0} \frac{f(k,m)}{k+m}$ converges. Thus
$$ \liminf_{k\to \infty} f(k,m)=0, \quad \lim_{n\to \infty} \sum_{n/\log n < k < \frac{n-m}{2}} f(k,m) \, r(n-k,k+m) = 0.$$
Lemma \ref{fe2} shows that, as $n\to \infty$,
\begin{equation*}
\begin{split}
1 &= o(1)+ f(n,m)  + \sum_{k\le n/\log n} f(k,m) \, r(n-k,k+m)  \\
& =  o(1)  + f(n,m)+ \sum_{k\le n/\log n} f(k,m) \, \lambda_q(k+m),
\end{split}
\end{equation*}
by Corollary \ref{corb}. Since $ \liminf_{n\to \infty} f(n,m)=0$ and the last sum is increasing in $n$, the result follows. 
\end{proof}

For real $t\ge 0$, we define $f(t,m)=f(\lfloor t \rfloor, m)$. 

\begin{lemma}\label{inteq}
For $n\ge 0$, $m \ge 1$ we have
$$ f(n,m) = \int_0^\infty \frac{f(t,m)}{t+m} \left( e^{-\gamma} - \omega\left(\frac{n+m}{t+m}-1\right) \right)   \mathrm{d}t + E(n,m), $$
where
$$ E(n,m) \ll  E_0(n,m):=\int_0^\infty \frac{f(t,m)}{(t+m)^2}\exp\left(-\frac{n+m}{t+m}\right)  \mathrm{d}t  + \frac{f(\frac{n-m}{2},m)}{n+m}. $$
\end{lemma}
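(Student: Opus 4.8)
The plan is to combine the two identities already in hand. Subtracting Lemma~\ref{se}, $1=\sum_{k\ge 0}f(k,m)\lambda_q(k+m)$, from Lemma~\ref{fe2}, $1=f(n,m)+\sum_{0\le k<(n-m)/2}f(k,m)r(n-k,k+m)$, gives
$$ f(n,m)=\sum_{0\le k<(n-m)/2}f(k,m)\bigl(\lambda_q(k+m)-r(n-k,k+m)\bigr)+\sum_{k\ge(n-m)/2}f(k,m)\lambda_q(k+m). $$
Write $v_k:=(n-k)/(k+m)=(n+m)/(k+m)-1$ and note that $k<(n-m)/2$ forces $n-k>k+m\ge 1$. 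Feeding Theorem~\ref{thm2}, the estimate $\lambda_q(M)=e^{-\gamma}M^{-1}+O(M^{-2})$ from Proposition~\ref{cqh}, the elementary bound $(u/e)^{-u}\le e^{e}e^{-u}$ for $u\ge 1$, and the bound $|\omega(u)-e^{-\gamma}|\ll e^{-u}$ for $u\ge 1$ (which follows from Lemma~\ref{lemom} for large $u$ and is trivial for bounded $u$ since $0\le\omega(u)\ll 1$) into the first sum yields, termwise,
$$ \lambda_q(k+m)-r(n-k,k+m)=\frac{e^{-\gamma}-\omega(v_k)}{k+m}+O\!\left(\frac{e^{-v_k}}{(k+m)^2}\right). $$
Using $\lambda_q(k+m)=e^{-\gamma}(k+m)^{-1}+O((k+m)^{-2})$ in the second sum as well, the job reduces to: (i) showing that the accumulated $O(\cdot)$'s are $O(E_0(n,m))$, and (ii) replacing the two surviving main sums by the integral in the statement, with error $O(E_0(n,m))$.

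For (i), put $h(t):=(t+m)^{-2}\exp(-\tfrac{n+m}{t+m})$; then $e^{-v_k}=e\exp(-\tfrac{n+m}{k+m})$, so the errors from the first sum are $\ll\sum_k f(k,m)h(k)$. Since $h'(t)$ has the sign of $\tfrac{n+m}{t+m}-2$, $h$ is increasing on $[0,(n-m)/2]$, and because $f(t,m)=f(\lfloor t\rfloor,m)$ is constant on $[k,k+1)$ we get $f(k,m)h(k)\le\int_k^{k+1}f(t,m)h(t)\,\mathrm{d}t$ whenever $k+1\le(n-m)/2$; summing, $\sum_k f(k,m)h(k)$ is bounded by $\int_0^\infty f(t,m)h(t)\,\mathrm{d}t$ (the first term of $E_0$) plus one leftover term at $k=\lfloor(n-m)/2\rfloor$, which is $\ll f((n-m)/2,m)\,h((n-m)/2)\ll f((n-m)/2,m)/(n+m)$ (the second term of $E_0$). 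The error $\sum_{k\ge(n-m)/2}f(k,m)(k+m)^{-2}$ coming from the second sum is handled similarly, now using $\exp(-\tfrac{n+m}{t+m})\ge e^{-2}$ for $t\ge(n-m)/2$ to compare $(k+m)^{-2}$ with $h(k)$.

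For (ii), replacing each $(k+m)^{-1}$ by $\int_k^{k+1}(t+m)^{-1}\,\mathrm{d}t+O((k+m)^{-2})$ turns the second main sum into $e^{-\gamma}\int_{(n-m)/2}^{\infty}\frac{f(t,m)}{t+m}\,\mathrm{d}t+O(E_0)$. For the first main sum, set $g(t):=\frac{e^{-\gamma}-\omega((n+m)/(t+m)-1)}{t+m}$, so $f(k,m)g(k)=\int_k^{k+1}f(t,m)g(t)\,\mathrm{d}t+O\bigl(f(k,m)\int_k^{k+1}|g'(t)|\,\mathrm{d}t\bigr)$. The crucial point is that, using Buchstab's equation in the form $\omega'(v)=(\omega(v-1)-\omega(v))/v$ (valid for all $v>1$), differentiating $g$ produces a cancellation,
$$ g'(t)=\frac{1}{(t+m)^2}\left[\bigl(\omega(v-1)-e^{-\gamma}\bigr)+\frac{\omega(v-1)-\omega(v)}{v}\right],\qquad v=\tfrac{n+m}{t+m}-1, $$
so that $|g'(t)|\ll(t+m)^{-2}e^{-v}\ll h(t)$, with no stray factor of $v$; hence $\sum_k f(k,m)\int_k^{k+1}|g'|\ll\int_0^\infty f(t,m)h(t)\,\mathrm{d}t\ll E_0$. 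Finally, since $\omega((n+m)/(t+m)-1)=0$ for $t>(n-m)/2$, the constant $e^{-\gamma}$ in the second integral coincides there with $e^{-\gamma}-\omega((n+m)/(t+m)-1)$, so the two integrals merge into $\int_0^\infty\frac{f(t,m)}{t+m}\bigl(e^{-\gamma}-\omega(\tfrac{n+m}{t+m}-1)\bigr)\,\mathrm{d}t$; the single interval straddling $t=(n-m)/2$ (where $\omega$ jumps at argument $1$) and the mismatch between integer summation limits and $(n-m)/2$ contribute only $O(f((n-m)/2,m)/(n+m))$, since there $|g|\ll(t+m)^{-1}$. The cases $n\le m$ follow directly from Lemma~\ref{se}, as then $\omega$ vanishes throughout the integrand.

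The step I expect to be the main obstacle is the clean derivative bound $|g'(t)|\ll h(t)$: differentiating $\omega((n+m)/(t+m)-1)$ naively leaves a factor $v=(n+m)/(t+m)-1$ that would be fatal for large $v$, and it is exactly the recombination of $\omega'(v)$ — rewritten via Buchstab's functional equation — with the term $\omega(v)-e^{-\gamma}$ that removes it. The rest is the somewhat laborious but routine check that every sum-versus-integral discrepancy and every boundary contribution is genuinely absorbed into the two terms of $E_0(n,m)$.
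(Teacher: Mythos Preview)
Your proof is correct and follows the same route as the paper: subtract Lemma~\ref{se} from Lemma~\ref{fe2}, insert Theorem~\ref{thm2} and Proposition~\ref{cqh} to approximate $r(n-k,k+m)$ and $\lambda_q(k+m)$, and then convert the resulting sum to an integral. The one noteworthy difference is how you bound $|g'(t)|$: the paper simply cites the estimate $\omega'(u)\ll e^{-2u}$ from Tenenbaum, which immediately kills the chain-rule factor $(v+1)$, whereas you rewrite $\omega'(v)=(\omega(v-1)-\omega(v))/v$ via Buchstab's equation and observe that the troublesome factor cancels algebraically against $\omega(v)-e^{-\gamma}$---a self-contained alternative that needs only the weaker bound $|\omega(u)-e^{-\gamma}|\ll e^{-u}$.
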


\begin{proof}
Lemmas \ref{fe2} and \ref{se} imply
\begin{multline*}
 f(n,m)= \sum_{0\le k < \frac{n-m}{2}} f(k,m) \,\Bigl(\lambda_q(k+m)- r(n-k,k+m)\Bigr) \\
+\sum_{k \ge  \frac{n-m}{2}} f(k,m) \, \lambda_q(k+m) =: S_1 + S_2,
\end{multline*}
say. We approximate $r(n-k, k+m)$ in $S_1$ by Theorem \ref{thm2} to get
$$ S_1 =   \sum_{0\le k < \frac{n-m}{2}} f(k,m) \, \lambda_q(k+m) \, e^\gamma 
\left(e^{-\gamma} -\omega\left(\frac{n-k}{k+m}\right)\right) +E_1(n,m),$$
where $ E_1(n,m) \ll  E_0(n,m). $
Next, we approximate $\lambda_q(k+m)$ in $S_1$ and $S_2$ by the second estimate in Proposition \ref{cqh}.
The resulting error term is again $\ll E_0(n,m)$ by Lemma \ref{lemom}. Thus
$$  f(n,m)=S_1+S_2 = \sum_{k\ge 0} \frac{f(k,m)}{k+m} \left( e^{-\gamma} - \omega\left(\frac{n+m}{k+m}-1\right) \right)
+E_2(n,m),$$
where $ E_2(n,m) \ll E_0(n,m)$,
since $\omega(1)=1$ and $\omega(u)=0$ for $u<1$. 
It remains to replace the sum by an integral. If $g(k)$ represents the $k$-th term of the sum, we estimate the error simply by 
$g(k) - \int_k^{k+1} g(t)\,  \mathrm{d}t \ll \max_{t\in [k,k+1]} |g'(t)|$ and use Lemma \ref{lemom} and $\omega'(u)\ll e^{-2u}$ for $u\ge 1$ (see \cite[Theorem III.6.4]{Ten}). The discontinuity of $\omega(u)$ at $u=1$ leads to an error of $\ll  f(\frac{n-m}{2},m)/(n+m) $. Thus we find that the error from replacing the sum by the integral is also $\ll E_0(n,m)$.
\end{proof}

\begin{proof}[Proof of Theorem \ref{thm3}]
Note that replacing the integer $n$ in Lemma \ref{inteq} by the real number $x$ with $n=[x]$ leaves the left-hand-side unchanged, and alters the integral by $\ll E_0(x,m)$. Thus, for $x\ge 0$, 
$$ f(x,m) = \alpha(m)- \int_0^\infty   \frac{f(t,m)}{t+m}  \, \omega\left(\frac{x+m}{t+m}-1\right)    \mathrm{d}t + E(x,m), $$
where 
$$ \alpha(m) := e^{-\gamma}  \int_0^\infty \frac{f(t,m)}{t+m}  \, \mathrm{d}t$$
and $E(x,m) \ll E_0(x,m)$. Lemma \ref{se} ensures that the integral defining $\alpha(m)$ converges and that $\alpha(m)\ll 1$.
Let $z\ge 0$ be given by $e^z=x/m+1$ and $y\ge 0$ by $e^y=t/m+1$. Define $G_m(z):=f(m(e^z-1),m)\,  e^z$ and 
$ \Omega(z):=\omega(e^z-1)$. We get
$$ G_m(z)=\alpha(m) e^z - \int_0^z G_m(y) \, \Omega(z-y) \, e^{z-y}  \, \mathrm{d}y + E_m(z),$$
where 
\begin{equation}\label{error}
E_m(z) \ll \frac{e^z}{m} \int_0^\infty \frac{G_m(y)}{e^{2y}} \exp\left( -e^{z-y}\right)  \mathrm{d}y + \frac{G_m(z-\log 2)}{me^z}.
\end{equation}
The last equation allows us to calculate Laplace transforms:
$$ \widehat{G}_m(s) = \frac{\alpha(m)}{s-1} - \widehat{G}_m(s) \widehat{\Omega}(s-1) + \widehat{E}_m(s) \qquad (\re s >1). $$
Hence
$$\widehat{G}_m(s) = \frac{\alpha(m)}{(s-1) (1+\widehat{\Omega}(s-1))} + \frac{\widehat{E}_m(s)}{1+\widehat{\Omega}(s-1)}
\qquad (\re s >1).$$
Equation \eqref{dinteq} written in terms of 
$ G(z):= e^z d(e^z -1) \asymp 1 $
is
$$ G(z) =e^z - \int_{0}^{z} G(y) \, \Omega(z-y) e^{z-y} \mathrm{d}y,$$ 
which shows that the Laplace transform of $G(z)$ is given by
$$ \widehat{G}(s) = \frac{1}{(s-1) (1+\widehat{\Omega}(s-1))} \qquad (\re s >1).$$
Thus
\begin{equation*}
\begin{split}
\widehat{G}_m(s) & = \alpha(m) \widehat{G}(s) + \widehat{E}_m(s) \widehat{G}(s) (s-1) \\
& = \alpha(m) \widehat{G}(s) +\widehat{E}_m(s) ( \widehat{G'}(s) - \widehat{G}(s)+1 ),
\end{split}
\end{equation*}
since $G(0)=1$. Now
$$ G'(y)-G(y) = e^{2y} d'(e^y-1) = -C + O\left(e^{-2y}\right)$$
by \cite[Corollary 5]{IDD3}. Inversion of the Laplace transforms yields
\begin{equation}\label{rep}
G_m(z) = \alpha(m) G(z) + E_m(z)+  \int_0^z E_m(y)\left( -C + O\left(e^{-2(z-y)}\right)\right) \mathrm{d}y .
\end{equation}
Since $0\le f(x,m) \le 1$, we have $0\le G_m(z) \le e^z$. Thus \eqref{error} shows that $E_m(z) \ll 1$ and \eqref{rep} yields $G_m(z) \ll 1+z$.  Using this estimate in \eqref{error} we find that $E_m(z)\ll (1+z) e^{-z}$ and \eqref{rep} now gives $G_m(z) \ll 1$. 
Appealing to \eqref{error} one last time, we finally arrive at 
$$E_m(z) \ll \frac{1}{m e^{z}}.$$
Hence
\begin{equation*}\label{betadef}
\beta(m) := -\int_0^\infty E_m(y) \, \mathrm{d}y 
= -\int_0^z E_m(y)\, \mathrm{d}y + O\left(\frac{1}{m e^z }\right)
\end{equation*}
and
$$ \int_0^z | E_m(y)| \, e^{-2(z-y)}\, \mathrm{d}y \ll  \frac{1}{m e^z }.$$
Thus \eqref{rep} yields
\begin{equation*}
G_m(z) = \alpha(m) G(z) + C \beta(m) +O\left(\frac{1}{m e^z }\right),
\end{equation*}
that is
\begin{equation*}\label{last}
f(x,m)= \alpha(m) \, d(x/m) + \frac{C \beta(m)}{x/m+1} +O\left(\frac{m}{(x+m)^2}\right),
\end{equation*}
for $x\ge 0$,  $m \ge 1$. Letting $x=m$ in the last equation shows that $\alpha(m)=1+O(1/m)$, since $\beta(m)\ll 1/m$, $f(m,m)=1$ and $d(1)=1$. 
With 
\begin{equation}\label{etadef}
\eta_q(m):= \alpha(m)+\beta(m)=1+O(1/m), 
\end{equation}
the estimate \eqref{IDD3T1} yields
\begin{equation}\label{lastf}
f(x,m)= \eta_q(m) \, d(x/m) +O\left(\frac{m}{(x+m)^2}\right).
\end{equation}
We need to show that $\eta_q(m) >0 $.
If $\eta_q(m)=0$ for some $m\ge 1$ and some $q\ge 2$, then there exists an $m$ such that $\eta_q(m)=0$ but $\eta_q(m+1)>0$, since $\eta_q(m)=1+O(1/m)$. Hence $f(n,m)\ll_{m,q} n^{-2}$ while $f(n,m+1) \asymp_{m,q} n^{-1}$.
Lemma \ref{lb} shows that this is impossible. 
Finally, the  estimate $\eta_q(m)=1 + O(m^{-1}q^{-(m+1)\tau})$ is the topic of Proposition \ref{etaqm}.
\end{proof} 

\begin{lemma}\label{lb}
For $n\ge 0$, $m\ge 1$ we have $f(n,m+1)\le q f(n+1,m)$. 
\end{lemma}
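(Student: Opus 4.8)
The plan is to prove the equivalent statement $|\mathcal{A}(n,m+1)| \le |\mathcal{A}(n+1,m)|$, which is precisely what the claimed inequality amounts to after multiplying through by $q^n$ and recalling $f(n,m) = q^{-n}|\mathcal{A}(n,m)|$. To establish this I would exhibit an explicit injection $\Phi\colon \mathcal{A}(n,m+1) \to \mathcal{A}(n+1,m)$, namely $\Phi(F) = XF$. Multiplication by the linear polynomial $X$ sends monic polynomials of degree $n$ to monic polynomials of degree $n+1$ and is manifestly injective, so everything reduces to checking the single implication: $F \in \mathcal{A}(n,m+1)$ forces $XF \in \mathcal{A}(n+1,m)$.

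For this I would invoke the factorization criterion of Lemma \ref{cond}. Write $F = P_1 P_2 \cdots P_j$ with the monic irreducible factors ordered so that $\deg(P_1) \le \cdots \le \deg(P_j)$. Since $\deg(X) = 1 \le \deg(P_1)$, a valid ordering of the irreducible factors of $XF$ is obtained by prepending $X$: set $Q_0 = X$ and $Q_i = P_i$ for $1 \le i \le j$, so that $XF = Q_0 Q_1 \cdots Q_j$ with $\deg(Q_0) \le \deg(Q_1) \le \cdots \le \deg(Q_j)$. Applying Lemma \ref{cond} to $XF$ with gap parameter $m$: the inequality for $i = 0$ reads $1 = \deg(Q_0) \le m$, which holds because $m \ge 1$; and for $1 \le i \le j$ it reads $\deg(P_i) \le \deg(Q_0 Q_1 \cdots Q_{i-1}) + m = 1 + \deg(P_1 \cdots P_{i-1}) + m$. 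But $F \in \mathcal{A}(n,m+1)$ together with Lemma \ref{cond} gives exactly $\deg(P_i) \le \deg(P_1 \cdots P_{i-1}) + (m+1)$, which is the same inequality. Hence $XF$ satisfies the criterion of Lemma \ref{cond}, i.e.\ $XF \in \mathcal{A}(n+1,m)$.

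The boundary case $n = 0$ (where $F = 1$ and the product is empty) is handled by the $i = 0$ check alone, so no separate argument is needed. Since $\Phi$ is injective, $|\mathcal{A}(n,m+1)| \le |\mathcal{A}(n+1,m)|$, and dividing by $q^n$ yields $f(n,m+1) \le q f(n+1,m)$.

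There is no serious obstacle here; the only point requiring care is the bookkeeping of the degree condition under the shift caused by the extra factor $X$ — one must observe that prepending a degree-$1$ factor converts the gap bound $m+1$ at level $n$ into precisely the gap bound $m$ at level $n+1$ in Lemma \ref{cond}. One could just as well use any monic linear polynomial $X - a$ with $a \in \mathbb{F}_q$ in place of $X$; a single choice suffices for the injection.
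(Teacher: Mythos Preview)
Your proof is correct and follows the same approach as the paper: exhibit the injection $F \mapsto XF$ from $\mathcal{A}(n,m+1)$ into $\mathcal{A}(n+1,m)$ and deduce the inequality on cardinalities. The paper asserts the containment $xF(x)\in\mathcal{A}(n+1,m)$ without detail, whereas you verify it carefully via Lemma~\ref{cond}; otherwise the arguments are identical.
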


\begin{proof}
For each $F\in \mathcal{A}(n,m+1)$, we have $xF(x)\in  \mathcal{A}(n+1,m)$. Hence $| \mathcal{A}(n,m+1)| \le | \mathcal{A}(n+1,m)|$, that is $q^n f(n,m+1) \le q^{n+1} f(n+1,m)$.
\end{proof}

\section{Proof of Theorem \ref{thm5}}

Let $\mathcal{B}(n,m)$ denote the set of permutations $\sigma \in S_n$ with the property that
$A_3$ has no gaps of size greater than $m$.  We have $g(n,m)=|\mathcal{B}(n,m)|/n! $. The following five lemmas correspond to Lemmas \ref{cond} through \ref{inteq} in the last section. We omit some of the proofs of these lemmas and the first half of the proof of Theorem \ref{thm5}, since they are almost identical to those of the previous section.

\begin{lemma}\label{condp}
A permutations $\sigma \in S_n$, which decomposes into $j$ cycles with cycle lengths $l_1 \le l_2 \le \ldots \le l_j$,
satisfies $\sigma \in \mathcal{B}(n,m)$ if and only if 
\begin{equation*}
l_i \le m+ \sum_{1\le k < i} l_k  \qquad (1\le i \le j).
\end{equation*}
\end{lemma}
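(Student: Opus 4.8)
The final statement is Lemma~\ref{condp}, the permutation analogue of Lemma~\ref{cond}. The plan is to mirror the proof of Lemma~\ref{cond} essentially verbatim, replacing divisor-degree considerations by the cardinalities of $\sigma$-invariant subsets. First I would establish necessity: if $l_i > m + \sum_{1\le k<i} l_k$ for some $i$, then the open interval $\bigl(\sum_{1\le k<i} l_k,\ l_i\bigr)$ has length $>m$, and I claim it contains no element of $A_3$. Indeed, any element of $A_3$ is a sum $\sum_{k\in T} l_k$ over a subset $T\subseteq\{1,\dots,j\}$; if such a sum lies strictly above $\sum_{1\le k<i} l_k$ then $T$ must contain some index $k'\ge i$ (using that the $l_k$ are sorted and $\sum_{1\le k<i} l_k$ is the sum of \emph{all} indices below $i$), whence the sum is at least $l_{k'}\ge l_i$, so it cannot lie strictly below $l_i$. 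This shows $A_3$ has a gap of size $>m$, so the condition is necessary.

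For sufficiency I would use induction on $j$, exactly as in Lemma~\ref{cond}. The base case $j=1$ is immediate: $A_3 = \{0, l_1\}$ with $l_1\le m$, so there is no gap $>m$. For the inductive step, suppose the condition holds with $j$ replaced by $j+1$ for a permutation $\sigma$ with cycle lengths $l_1\le\cdots\le l_{j+1}$. Let $D$ be the set $A_3$ associated to the cycle lengths $l_1,\dots,l_j$; by the inductive hypothesis (the condition restricted to $1\le i\le j$ still holds) $D$ has no gap $>m$, and $\max D = \sum_{1\le k\le j} l_k$. The full set is $A_3 = D \cup (D + l_{j+1})$, and the hypothesis for $i=j+1$ gives $l_{j+1}\le m + \sum_{1\le k\le j} l_k = m + \max D$. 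Since $0\in D$, the translate $D+l_{j+1}$ starts at $l_{j+1}\le m+\max D$, so the union $D\cup(D+l_{j+1})$ bridges the two copies of $D$ with a gap of at most $m$; as each copy individually has no gap $>m$, neither does the union. Hence $\sigma\in\mathcal{B}(n,m)$.

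I expect no genuine obstacle here — the argument is structurally identical to that of Lemma~\ref{cond}, with the translation ``$\deg(P_i)\leftrightarrow l_i$'', ``$\deg(P_1\cdots P_{i-1})\leftrightarrow\sum_{1\le k<i}l_k$'', ``set of degrees of divisors $\leftrightarrow A_3$''. The only point requiring a moment's care is the necessity direction: one must note that the $l_k$ being sorted in increasing order is what makes $\sum_{1\le k<i} l_k$ an honest threshold below which no $A_3$-element involving index $\ge i$ can fall. Accordingly I would simply write: \emph{The proof is identical to that of Lemma~\ref{cond}, with the cardinalities of $\sigma$-invariant sets playing the role of the degrees of divisors.}
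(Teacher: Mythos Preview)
Your proposal is correct and matches the paper's own proof, which simply reads ``Follow the proof of Lemma~\ref{cond}, replacing $\deg(P_i)$ by $l_i$.'' Your write-up is in fact more detailed than the paper's (particularly the necessity direction, where you spell out why no element of $A_3$ can land in the open interval), but the approach is identical.
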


\begin{proof}
Follow the proof of Lemma \ref{cond}, replacing $\deg(P_i)$ by $l_i$.
\end{proof}

\begin{lemma}\label{ge}
For $n\ge 0$, $m\ge 0$ we have
$$ 1=\sum_{0\le k \le n} g(k,m) \, p(n-k,k+m)  .$$
\end{lemma}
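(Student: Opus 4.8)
The plan is to mirror the proof of Lemma~\ref{fe}, with Lemma~\ref{condp} playing the role of Lemma~\ref{cond} and the binomial bookkeeping $\binom{n}{k}k!(n-k)! = n!$ replacing the identity $q^{k}\cdot q^{n-k} = q^{n}$. First I would record the decomposition. Given $\sigma \in S_n$, write its cycles with lengths $l_1 \le l_2 \le \cdots \le l_j$. Exactly as in Lemma~\ref{fe}, Lemma~\ref{condp} provides a unique index $i_0 \in \{0,1,\ldots,j\}$ for which the permutation $\pi$ supported on the cycles of lengths $l_1,\ldots,l_{i_0}$ lies in $\mathcal{B}(k,m)$, where $k := l_1+\cdots+l_{i_0}$, while $l_{i_0+1} > k+m$ (reading $i_0=0$ as $\pi=\mathrm{id}$ on the empty set and $i_0=j$ as $\sigma \in \mathcal{B}(n,m)$). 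The complementary permutation $\rho$, supported on the remaining $n-k$ points, then has every cycle of length $>k+m$, hence is counted by $p(n-k,k+m)$. Uniqueness of $i_0$ is as in Lemma~\ref{fe}: the inequality in Lemma~\ref{condp}, once violated at some index, stays violated for all larger products, so the set of indices $i$ with the initial product lying in $\mathcal{B}$ is an initial segment $\{0,\ldots,i_0\}$.

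Second, I would check that $\sigma \mapsto (k,S,\pi,\rho)$ is a bijection, where $0\le k\le n$, $S \subseteq \{1,\ldots,n\}$ with $|S|=k$, $\pi$ is a permutation of $S$ lying in $\mathcal{B}(k,m)$, and $\rho$ is a permutation of the complement with no cycle of length $\le k+m$. The one point needing care is that the join $\sigma$ of $\pi$ and $\rho$ reproduces the decomposition above: the cycles of $\pi$ all have length $\le k$, while those of $\rho$ all have length $>k+m \ge k$, so in the sorted cycle list of $\sigma$ the cycles of $\pi$ precede those of $\rho$; hence $i_0$ equals the number of cycles of $\pi$, and the decomposition returns exactly $(\pi,\rho)$.

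Finally, counting all permutations of $\{1,\ldots,n\}$ according to $k$: there are $\binom{n}{k}$ choices of $S$, then $k!\,g(k,m)$ choices of $\pi$ and $(n-k)!\,p(n-k,k+m)$ choices of $\rho$; since $\binom{n}{k}k!(n-k)! = n!$, this yields $n! = \sum_{0\le k\le n} n!\,g(k,m)\,p(n-k,k+m)$, and dividing by $n!$ gives the claim. There is no substantial obstacle here; the only step that requires a moment's thought is the non-interleaving of the two groups of cycle lengths that makes the decomposition well defined, and this is immediate from the strict inequality $l_{i_0+1} > k+m \ge k$.
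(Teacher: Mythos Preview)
Your proof is correct and follows essentially the same approach as the paper's own proof: decompose each $\sigma\in S_n$ at the first index where the condition of Lemma~\ref{condp} fails, then count the resulting pairs via $\binom{n}{k}k!\,(n-k)!=n!$. Your explicit verification of bijectivity via the non-interleaving observation (cycles of $\pi$ have length $\le k$, those of $\rho$ have length $>k+m$) is a nice point of rigor that the paper leaves implicit.
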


\begin{proof}
Lemma \ref{condp} implies that each  $\sigma \in S_n$ decomposes uniquely as $\sigma=\rho \tau$, where $\sigma=\sigma_1 \sigma_2 \cdots \sigma_j$,  the $\sigma_i$ are cycles with lengths $l_1 \le l_2 \le \ldots \le l_j$, 
$$ \rho=\sigma_1\cdots \sigma_{i_0} , \quad \tau=\sigma_{i_0+1}\cdots \sigma_j ,$$
and $i_0$ is the unique index such that
$$ l_i \le  m+ \sum_{1\le r < i} l_r  \qquad (1\le i \le i_0) , \quad l_{i_0+1}>l_1+\ldots + l_{i_0}+m.$$
The case $i_0=0$ corresponds to $\tau=\sigma$, while $i_0=j$ means $\rho=\sigma \in \mathcal{B}(n,m)$.
Given that $\sum_{1\le r \le  i_0} l_r =k$, there are $\binom{n}{k}$ ways to choose the $k$ numbers from $\{1,2,\ldots,n\}$, which $\rho$ is acting on. Once these $k$ numbers are chosen, there are $k! \, g(k,m)$ choices for $\rho$ and $(n-k)! \,  p(n-k,k+m)$ choices for $\tau$. Counting all of the $n!$ permutations in $S_n$ according to $k=\sum_{1\le r \le  i_0} l_r $ , we find that
$$n! = \sum_{0\le k \le n}\binom{n}{k} k! \, g(k,m) \, (n-k)! \, p(n-k,k+m),$$
from which the result follows.
\end{proof}

\begin{lemma}\label{ge2}
For $n\ge 0$, $m\ge 0$ we have
$$ 1= g(n,m)+\sum_{0\le k < \frac{n-m}{2}} g(k,m) \, p(n-k,k+m) .$$
\end{lemma}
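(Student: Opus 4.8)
The plan is to mimic exactly the proof of Lemma \ref{fe2}, which is the polynomial analogue of the statement we want. Lemma \ref{ge} gives the identity $1=\sum_{0\le k \le n} g(k,m)\, p(n-k,k+m)$, so it suffices to identify which terms of that sum survive and which vanish.

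First I would isolate the term $k=n$: here $p(n-k,k+m)=p(0,n+m)=1$ by the convention $p(0,m)=1$ established just before Proposition \ref{psm}, so this term contributes exactly $g(n,m)$. Next I would argue that the remaining terms with $k$ in the range $\frac{n-m}{2}\le k < n$ all vanish. This is because $p(\ell,m')=0$ whenever $0<\ell\le m'$, again a convention recorded in the text (it is used in the proof of Lemma \ref{fep}, and follows from the fact that a permutation of $\ell\ge 1$ objects must have a cycle of length at most $\ell$). For $\frac{n-m}{2}\le k<n$ we have $n-k\ge 1$ and $n-k\le n - \frac{n-m}{2} = \frac{n+m}{2}\le k+m$ (the last inequality being equivalent to $k\ge \frac{n-m}{2}$), so $p(n-k,k+m)=0$. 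Hence only the terms with $0\le k<\frac{n-m}{2}$ and the term $k=n$ remain, giving
$$ 1 = g(n,m) + \sum_{0\le k < \frac{n-m}{2}} g(k,m)\, p(n-k,k+m),$$
which is the claim.

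There is no real obstacle here; the only point requiring a moment's care is the boundary case analysis of the inequality $n-k\le k+m$, and checking that the edge value $k=\frac{n-m}{2}$ (when $n-m$ is even) is correctly excluded — at that value $n-k=k+m$, so $p(n-k,k+m)=p(k+m,k+m)=0$ still holds, consistent with the strict inequality $k<\frac{n-m}{2}$ in the surviving sum. Since the author explicitly states that these lemmas are almost identical to their polynomial counterparts and invites omission of the proofs, I would simply write: ``Follow the proof of Lemma \ref{fe2}, using Lemma \ref{ge} in place of Lemma \ref{fe}, together with $p(0,n+m)=1$ and $p(n-k,k+m)=0$ for $\frac{n-m}{2}\le k<n$.''
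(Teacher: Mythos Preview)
Your proposal is correct and follows exactly the same approach as the paper: the paper's proof simply says ``This follows from Lemma \ref{ge} since $p(0,n+m)=1$ and $p(n-k,k+m)=0$ if $\frac{n-m}{2} \le k < n$,'' which is precisely your argument (with less detail).
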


\begin{proof}
This follows from Lemma \ref{ge} since $p(0,n+m)=1$ and $p(n-k,k+m)=0$ if $\frac{n-m}{2} \le k < n$. 
\end{proof}

\begin{lemma}\label{sep}
For $m\ge 0$ we have
$$ 1=\sum_{k\ge 0} g(k,m) \, e^{-H_{k+m}} .$$
\end{lemma}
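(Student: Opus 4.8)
The plan is to transcribe the proof of Lemma \ref{se}, the exact polynomial counterpart, under the dictionary $f\leftrightarrow g$, $r\leftrightarrow p$, and $\lambda_q(k+m)\leftrightarrow e^{-H_{k+m}}$; each of the three facts that powered that argument has an analogue ready to hand. When $m=0$ the identity is immediate, since $g(0,0)=1=e^{-H_0}$ while Lemma \ref{condp} forces $g(k,0)=0$ for every $k\ge 1$ (such a permutation would need a shortest cycle of length $\le 0$). So I fix $m\ge 1$ from now on.

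First I would use Proposition \ref{pub} to record that $p(n-k,k+m)\asymp (k+m)^{-1}$ whenever $0\le k<\frac{n-m}{2}$, since then the ratio $(n-k)/(k+m)$ exceeds $1$, where $\omega$ is bounded above and below by positive constants. Because Lemma \ref{ge2} expresses $1$ as a sum of nonnegative terms, this yields
$$ \sum_{0\le k<\frac{n-m}{2}} \frac{g(k,m)}{k+m} \ll 1 \qquad (n\ge 1), $$
with an implied constant independent of $n$; letting $n\to\infty$ shows that $\sum_{k\ge 0} g(k,m)/(k+m)$ converges. Two consequences follow: $\liminf_{k\to\infty} g(k,m)=0$, as the series would otherwise diverge, and the tail $\sum_{n/\log n<k<\frac{n-m}{2}} g(k,m)\,p(n-k,k+m)\ll \sum_{k>n/\log n} g(k,m)/(k+m)$ tends to $0$ as $n\to\infty$.

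Next, I would split the sum in Lemma \ref{ge2} at $k=n/\log n$ and discard the upper part, just shown to be $o(1)$, leaving
$$ 1 = o(1) + g(n,m) + \sum_{k\le n/\log n} g(k,m)\,p(n-k,k+m) \qquad (n\to\infty). $$
For $k\le n/\log n$ with $m$ fixed, the ratio $(n-k)/(k+m)$ tends to infinity, so Proposition \ref{fullp} together with Lemma \ref{lemom} (via $\omega(u)\to e^{-\gamma}$) lets me replace $p(n-k,k+m)$ by $e^{-H_{k+m}}$, the error after summation against $g(k,m)$ being again $o(1)$, since it is dominated by $\sum_k g(k,m)/(k+m)$ times a factor uniformly small over this range of $k$. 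Hence $1=o(1)+g(n,m)+\sum_{k\le n/\log n} g(k,m)\,e^{-H_{k+m}}$. The last sum is nondecreasing in $n$ and bounded, its terms being $\ll g(k,m)/(k+m)$, so it converges to $\sum_{k\ge 0} g(k,m)\,e^{-H_{k+m}}$; letting $n\to\infty$ along a subsequence on which $g(n,m)\to 0$ then gives the identity.

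The only point requiring any care — the main obstacle, such as it is — is making the replacement of $p(n-k,k+m)$ by its limit $e^{-H_{k+m}}$ uniform enough over $0\le k\le n/\log n$ that the accumulated error stays $o(1)$; this is precisely where the rapid decay of $\omega(u)-e^{-\gamma}$ in Lemma \ref{lemom} and of the error term in Proposition \ref{fullp} does the work, just as in the proof of Lemma \ref{se}. The remaining steps are a verbatim translation of that proof, and I would expect no new difficulty.
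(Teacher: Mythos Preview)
Your proposal is correct and is exactly what the paper intends: the paper's own proof of Lemma \ref{sep} consists of the single sentence ``Follow the proof of Lemma \ref{se},'' and your transcription under the dictionary $f\leftrightarrow g$, $r\leftrightarrow p$, $\lambda_q\leftrightarrow e^{-H}$ carries this out faithfully, invoking the right permutation analogues (Proposition \ref{pub} in place of Corollary \ref{cora}, and Proposition \ref{fullp} with Lemma \ref{lemom} in place of Corollary \ref{corb}).
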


\begin{proof}
Follow the proof of Lemma \ref{se}.
\end{proof}

\begin{lemma}\label{inteqp}
For $n\ge 0$, $m \ge 1$ we have
$$ g(n,m) = \int_0^\infty \frac{g(t,m)}{t+m} \left( e^{-\gamma} - \omega\left(\frac{n+m}{t+m}-1\right) \right)   \mathrm{d}t + \mathcal{E}(n,m), $$
where
$$ \mathcal{E}(n,m) \ll  \mathcal{E}_0(n,m):=\int_0^\infty \frac{g(t,m)}{(t+m)^2}\exp\left(-\frac{n+m}{t+m}\right)   \mathrm{d}t  + \frac{g(\frac{n-m}{2},m)}{n+m}. $$
\end{lemma}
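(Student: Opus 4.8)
The plan is to follow the proof of Lemma \ref{inteq} essentially verbatim, under the dictionary $f\leftrightarrow g$, $r\leftrightarrow p$, $\lambda_q(k+m)\leftrightarrow e^{-H_{k+m}}$, $E_0\leftrightarrow\mathcal{E}_0$, with Theorem \ref{thm2} replaced by Proposition \ref{fullp}. The permutation case is in fact marginally shorter: in the proof of Lemma \ref{inteq} one first rewrites $r(n-k,k+m)$ by Theorem \ref{thm2} (which introduces the factor $\lambda_q(k+m)$) and then approximates $\lambda_q(k+m)$ by $e^{-\gamma}/(k+m)$, whereas Proposition \ref{fullp} directly produces the factor $e^{\gamma-H_{k+m}}$, so only the elementary estimate $H_j=\log j+\gamma+O(1/j)$ is needed to pass to $1/(k+m)$. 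Throughout one uses $0\le g(k,m)\le 1$.

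First I would subtract the identity of Lemma \ref{sep} from that of Lemma \ref{ge2}, obtaining
$$ g(n,m)=\sum_{0\le k<\frac{n-m}{2}} g(k,m)\bigl(e^{-H_{k+m}}-p(n-k,k+m)\bigr)+\sum_{k\ge \frac{n-m}{2}} g(k,m)\,e^{-H_{k+m}}=:S_1+S_2. $$
In $S_1$ one has $u:=\frac{n-k}{k+m}=\frac{n+m}{k+m}-1\ge 1$ with $n-k>k+m\ge 1$, so Proposition \ref{fullp} applies and yields $p(n-k,k+m)=e^{\gamma-H_{k+m}}\omega(u)\bigl(1+O((u/e)^{-u}/(k+m))\bigr)$; combined with $e^{-H_{k+m}}\asymp(k+m)^{-1}$ this gives
$$ e^{-H_{k+m}}-p(n-k,k+m)=e^{\gamma-H_{k+m}}\bigl(e^{-\gamma}-\omega(u)\bigr)+O\!\left(\frac{(u/e)^{-u}}{(k+m)^2}\right). $$
Since $(u/e)^{-u}=e^{u}u^{-u}\ll e^{-(u+1)}=e^{-1}\exp(-\tfrac{n+m}{k+m})$ for $u\ge 1$ (trivially on any bounded $u$-range, and because $u\log u\ge 2u+1$ for large $u$), summing the $O$-term over $k$ and comparing with an integral contributes $\ll\mathcal{E}_0(n,m)$.

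Then I would replace $e^{\gamma-H_{k+m}}$ by $(k+m)^{-1}$ in $S_1$ and $e^{-H_{k+m}}$ by $e^{-\gamma}(k+m)^{-1}$ in $S_2$; the resulting errors are $\ll\mathcal{E}_0(n,m)$, the $S_1$ part using the bound $|e^{-\gamma}-\omega(u)|\ll e^{-(u+1)}$ for $u\ge 1$ from Lemma \ref{lemom} (trivial on bounded ranges), the $S_2$ part because $\mathcal{E}_0(n,m)\gg\sum_{k\ge (n-m)/2} g(k,m)(k+m)^{-2}$. Since $\omega(u)=0$ for $u<1$, the terms $k\ge\frac{n-m}{2}$ merge into a single sum, the jump of $\omega$ at $u=1$ costing $\ll g(\tfrac{n-m}{2},m)/(n+m)$, which is part of $\mathcal{E}_0$. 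This produces
$$ g(n,m)=\sum_{k\ge 0}\frac{g(k,m)}{k+m}\left(e^{-\gamma}-\omega\!\left(\frac{n+m}{k+m}-1\right)\right)+O\bigl(\mathcal{E}_0(n,m)\bigr), $$
after which it remains to replace the sum by the integral, estimating the $k$-th discrepancy by $\max_{t\in[k,k+1]}|h'(t)|$ with $h(t)=\frac{g(t,m)}{t+m}\bigl(e^{-\gamma}-\omega(\frac{n+m}{t+m}-1)\bigr)$ (here $g(t,m)=g(\lfloor t\rfloor,m)$ is piecewise constant), using $\omega'(u)\ll e^{-2u}$ for $u\ge 1$ and Lemma \ref{lemom} for the tail, with the discontinuity of $\omega$ at $u=1$ again contributing $\ll g(\tfrac{n-m}{2},m)/(n+m)$; all these errors are $\ll\mathcal{E}_0(n,m)$, which is the assertion. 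The one point that needs care is routine and identical to the proof of Lemma \ref{inteq}: checking that $(u/e)^{-u}$, $|e^{-\gamma}-\omega(u)|$ and $\omega'(u)$ are all $\ll\exp(-\tfrac{n+m}{t+m})$ outside a bounded neighbourhood of $u=1$, with the excess near $u=1$ absorbed by the term $g(\tfrac{n-m}{2},m)/(n+m)$ in $\mathcal{E}_0$; so no genuine obstacle arises.
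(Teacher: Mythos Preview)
Your proposal is correct and follows exactly the approach the paper itself indicates: the paper's proof of Lemma \ref{inteqp} simply says ``Follow the proof of Lemma \ref{inteq} and use Proposition \ref{fullp} (in place of Theorem \ref{thm2}) to approximate $p(n-k,k+m)$,'' and that is precisely what you do, with the dictionary $f\leftrightarrow g$, $r\leftrightarrow p$, $\lambda_q\leftrightarrow e^{-H}$. Your observation that the permutation case is marginally cleaner (Proposition \ref{fullp} already carries the factor $e^{\gamma-H_{k+m}}$, so only the elementary $H_j=\log j+\gamma+O(1/j)$ is needed) is accurate and in the spirit of the paper's compressed treatment.
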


\begin{proof}
Follow the proof of Lemma \ref{inteq} and use Proposition \ref{fullp} (in place of Theorem \ref{thm2}) to approximate $p(n-k,k+m)$.
\end{proof}

\begin{proof}[Proof of Theorem \ref{thm5}]
As in the proof of Theorem \ref{thm3}, we arrive at the analogue of \eqref{etadef} and \eqref{lastf}, namely
\begin{equation}\label{lastg}
g(x,m)= \eta(m) \, d(x/m) +O\left(\frac{m}{(x+m)^2}\right),
\end{equation}
where $\eta(m)=1+O(1/m)$. It remains to show that $\eta(m)=1$ for all $m\ge 1$. 
To this end, we multiply the equation in Lemma \ref{ge} by $z^n$, $|z|<1$, to get
$$ z^n=\sum_{0\le k \le n} g(k,m)z^k \, p(n-k,k+m) z^{n-k} .$$
Summing over $n\ge 0$ yields
$$ \frac{1}{1-z} = \sum_{k\ge 0}  g(k,m)\, z^k \, \frac{1}{1-z} \exp \left(-\sum_{j=1}^{k+m} \frac{z^j}{j}\right),$$
by \eqref{Dgen}, and hence
$$ z^m = \sum_{k\ge m}  g(k-m,m)\,  z^k  \exp \left(-\sum_{j=1}^{k} \frac{z^j}{j}\right).$$
Differentiating both sides with respect to $z$, we obtain
\begin{equation}\label{mz}
m z^{m-1} =  \sum_{k\ge m}  g(k-m,m) \,  z^k \exp \left(-\sum_{j=1}^{k} \frac{z^j}{j}\right) \left(\frac{k}{z}-\frac{1-z^k}{1-z}\right).
\end{equation}
Our plan is to learn about $\eta(m)$ by inserting the estimate \eqref{lastg} into \eqref{mz}. 
Let $m\ge 1$ be arbitrary but fixed and let $z=1-\delta$, where $0<\delta <1/2$. 
We will need the following estimates.
First,
$$ \frac{k}{z}-\frac{1-z^k}{1-z} =\frac{k}{1-\delta} -\frac{1-\exp(-k\delta(1+O(\delta)))}{\delta}\ll 
\begin{cases}
   k^2 \delta & (k\le \delta^{-1})  \\
   k       & (k> \delta^{-1}).
  \end{cases}
$$
Second, as in \eqref{sumzj}, we have
\begin{equation*}
\begin{split}
\sum_{j=1}^{k} \frac{z^j}{j} 
& = H_k+ \int_{\log z}^0 (1-e^{kt})  \, \mathrm{d}t - \int_{\log z}^0  \frac{1-e^{kt}}{1-e^t}  \, \mathrm{d}t \\
& \ge H_k + 0 -  \int_{\log z}^0  \frac{1-e^{kt}}{-t} (1+O(t))  \, \mathrm{d}t \\
& \ge \log k -\max(0,\log(-k\log z)) -O(1) \\
&\ge \min(\log \delta^{-1}, \log k) - O(1),
\end{split}
\end{equation*}
hence
$$  \exp \left(-\sum_{j=1}^{k} \frac{z^j}{j}\right) \ll \max(\delta, 1/k).$$
Since $g(k-m,m)\ll m/k$ by \eqref{lastg}, the contribution from $k\le \delta^{-1/2}$ to the right-hand side of \eqref{mz} is
$$\ll \sum_{m\le k\le \delta^{-1/2}} \frac{m}{k}\cdot 1 \cdot  \frac{1}{k}\cdot k^2 \delta \le m \delta^{1/2}. $$
For $k>\delta^{-1/2}$, we estimate $g(k-m,m)$ in \eqref{mz} by
$$g(k-m,m)= \frac{C\eta(m) m}{k} +O\left(\frac{m}{k^2}+\frac{m^3}{k^3}\right),$$
which follows from \eqref{lastg} and \eqref{IDD3T1}. The contribution from the error terms is
$$ \ll \sum_{\delta^{-1/2}<k\le \delta^{-1}} \frac{m^3}{k^2} \cdot 1 \cdot \frac{1}{k} \cdot k^2 \delta
+ \sum_{k>\delta^{-1}} \frac{m^3}{k^2} \cdot e^{-\delta k} \cdot \delta \cdot k
\ll m^3\delta \log (\delta^{-1}) .
$$
After inserting the terms corresponding to $1\le k\le \delta^{-1/2}$, which contribute $\ll m\delta^{1/2}$, we get
$$ m z^{m-1} =O\left(m^3 \delta^{1/2}\right)+  \sum_{k\ge 1} \frac{C\eta(m) m}{k} z^k \,  \exp \left(-\sum_{j=1}^{k} \frac{z^j}{j}\right) \left(\frac{k}{z}-\frac{1-z^k}{1-z}\right),$$
where $z=1-\delta$. Taking the limit as $\delta \to 0^+$, we find that
$$ \frac{1}{C\eta(m)} = \lim_{z\to 1^-} \sum_{k\ge 1} \frac{z^k }{k} \,  \exp \left(-\sum_{j=1}^{k} \frac{z^j}{j}\right) \left(\frac{k}{z}-\frac{1-z^k}{1-z}\right).$$
Since the right-hand side is independent of $m$, so is $\eta(m)$. But $\eta(m)=1+O(1/m)$. Thus $\eta(m)=1$ for all $m\ge 1$ and the proof of Theorem \ref{thm5} is complete.
\end{proof}

\section{An estimate for $\eta_q(m)$.}

Define $\kappa $ to be the unique positive constant satisfying 
\begin{equation}\label{kappa}
1=\int_1^\infty \omega(y)\, (y+1)^{-1-\kappa}\,  \mathrm{d} y  .
\end{equation} 
Numerical calculations show that $\kappa = 0.433489...$.
This estimate was obtained using exact formulas for $\omega(y)$ for $0\le y \le 5$, derived with the help of Mathematica; we used a table of zeros and relative extrema of $\omega(u)-e^{-\gamma}$ on the interval $[5, 10.3355]$ due to Cheer and Goldston \cite{CG}, and the estimate $|\omega(u)-e^{-\gamma}|<1/\Gamma(u+1)$ from \cite[Lemma 1]{IDD3} for $u\ge 10.3355$.

\begin{proposition}\label{fandg}
For $n\ge 0$, $m\ge 1$ we have
$$ f(n,m)=g(n,m) + O\left(\frac{n^\kappa}{m^{1+\kappa} q^{(m+1)/2}}\right).$$
\end{proposition}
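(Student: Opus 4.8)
The plan is to mirror the argument that led from Lemmas \ref{fe2} and \ref{se} to the integral equation in Lemma \ref{inteq}, but now applied to the \emph{difference} $h(n,m):=f(n,m)-g(n,m)$. First I would record the two ``renewal'' identities we already have: Lemma \ref{fe2} expresses $f(n,m)$ via $f(k,m)\,r(n-k,k+m)$, and Lemma \ref{ge2} expresses $g(n,m)$ via $g(k,m)\,p(n-k,k+m)$. Subtracting, and writing $r(n-k,k+m)=p(n-k,k+m)+\bigl(r(n-k,k+m)-p(n-k,k+m)\bigr)$, I would split the difference into a ``main'' part $\sum_k h(k,m)\,p(n-k,k+m)$ and an ``input'' part $\sum_k f(k,m)\bigl(r(n-k,k+m)-p(n-k,k+m)\bigr)$. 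The second part is controlled by Proposition \ref{rap}: since $r(\ell,j)-p(\ell,j)\ll \ell^{-1}q^{-\ell/2}+j^{-2}q^{-(j+1)/2}$, and $f(k,m)\ll m/(k+m)$ by \eqref{lastf}, the $q^{-(k+m+1)/2}$ factor forces strong convergence and the whole input is $\ll (k+m)$-type savings times $q^{-(m+1)/2}$; the polynomially-growing prefactor $\sum_k (m/(k+m))\cdot(k+m)^{-2}$ contributes the $n^\kappa/m^{1+\kappa}$ shape once one tracks how the self-improvement below amplifies it.

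Next I would pass to the asymptotic/integral form exactly as in Lemma \ref{inteq}: using Proposition \ref{fullp} (equivalently Theorem \ref{thm2}) to replace $p(n-k,k+m)$ by $e^{\gamma-H_{k+m}}\omega\!\bigl(\tfrac{n-k}{k+m}\bigr)$ and then Proposition \ref{cqh} to replace $e^{-H_{k+m}}$ by $e^{-\gamma}/(k+m)$, and finally Lemma \ref{sep} (the $g$-analogue of Lemma \ref{se}) to rewrite the $\omega(1)=1$ contribution, one obtains for $h$ a homogeneous integral equation of the form
$$ h(n,m)= -\int_0^\infty \frac{h(t,m)}{t+m}\,\omega\!\Bigl(\frac{n+m}{t+m}-1\Bigr)\,\mathrm{d}t + \widetilde E(n,m), $$
with $\widetilde E(n,m)\ll n^\kappa m^{-1-\kappa}q^{-(m+1)/2}$ absorbing both the $r-p$ input and the $\omega$-smoothing errors of Lemma \ref{inteq}. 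Passing to the variables $z,y$ with $e^z=n/m+1$, $e^y=t/m+1$ and Laplace-transforming as in the proof of Theorem \ref{thm3}, the kernel $1+\widehat\Omega(s-1)$ appears in the denominator; the homogeneous equation has the trivial solution, and the inhomogeneous term of size $\asymp m^{-1-\kappa}q^{-(m+1)/2}e^{\kappa z}$ is reproduced (up to a bounded multiplicative constant) because $\kappa$ was \emph{defined} in \eqref{kappa} precisely so that $s=\kappa+1$ is the relevant growth rate: $\widehat\Omega$ evaluated along that line is finite, and \eqref{kappa} is exactly the normalization $\int_1^\infty \omega(y)(y+1)^{-1-\kappa}\mathrm dy=1$ that makes $e^{\kappa z}$ compatible with the convolution by $\Omega(z-y)e^{z-y}$. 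Inverting the transform then yields $h(n,m)\ll n^\kappa m^{-1-\kappa}q^{-(m+1)/2}$, which is the claim.

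The main obstacle I expect is the bookkeeping in the self-improvement/Laplace step: one must show that the convolution operator with kernel $\omega\!\bigl(\tfrac{n+m}{t+m}-1\bigr)/(t+m)$ does not amplify the error term beyond the $e^{\kappa z}$ growth rate — i.e. that $\kappa$ in \eqref{kappa} really is the critical exponent and that $1+\widehat\Omega(s-1)$ is bounded away from $0$ for $\re s$ slightly exceeding $\kappa+1$, exactly as the analogous fact for $\re s>1$ underpinned Theorem \ref{thm3}. A secondary technical point is justifying the interchange of summation and the passage to integrals uniformly in $q$ and $m$ when $n$ is not large compared to $m$ (the regime $m\le n\le $ a few times $m$), where one instead invokes the trivial bounds $0\le f,g\le 1$ together with Lemma \ref{lb} and its $g$-analogue to start the induction; there the stated error term is $\gg 1$ and the estimate is vacuous, so only the range $n$ somewhat larger than $m$ needs the transform argument. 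Everything else is a routine rerun of the machinery already developed in Sections 2 and 3.
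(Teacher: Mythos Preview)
Your decomposition into a ``main'' part $\sum_k h(k,m)\,p(n-k,k+m)$ and an ``input'' part $\sum_k f(k,m)\bigl(r-p\bigr)$ is exactly right, and the input is indeed $\ll m^{-1}q^{-(m+1)/2}$ by Proposition~\ref{rap}. The gap is in the next step. When you pass from the discrete recursion to the integral equation you replace $p(n-k,k+m)$ by $\omega\bigl(\tfrac{n-k}{k+m}\bigr)/(k+m)$ and $e^{-H_{k+m}}$ by $e^{-\gamma}/(k+m)$, and then a sum by an integral; each of these introduces an error of the shape $\sum_k |h(k,m)|\,O\bigl((k+m)^{-2}\bigr)$. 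A priori you only know $|h|\le 2$, so this smoothing error is $\ll 1/m$ with \emph{no} factor $q^{-(m+1)/2}$ whatsoever. Your claimed bound $\widetilde E(n,m)\ll n^\kappa m^{-1-\kappa}q^{-(m+1)/2}$ therefore presupposes the very $q$-saving on $h$ that you are trying to prove, and the bootstrap from Theorem~\ref{thm3} cannot manufacture it: that machinery only yields errors of size $m/(n+m)^2$, and indeed subtracting \eqref{lastf} from \eqref{lastg} gives $h(n,m)=(\eta_q(m)-1)d(n/m)+O(m/(n+m)^2)$, with the unknown $\eta_q(m)-1$ absorbing all the $q$-dependence.

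The paper sidesteps this by never passing to the integral equation. It works directly with the discrete inequality $|h(n,m)|\le S_1+S_2$ coming from Lemmas~\ref{fe2} and \ref{ge2}, and runs a dyadic induction on $j$ with $n\le m2^j$. The role of $\kappa$ is not that $1+\widehat\Omega(s-1)$ stays away from zero near $s=1+\kappa$ (in fact it equals $2$ there, which for the \emph{signed} equation would suggest even smaller growth); rather, after taking absolute values the kernel $p(n-k,k+m)\sim \omega\bigl(\tfrac{n-k}{k+m}\bigr)/(k+m)$ maps $(k+m)^\kappa$ to exactly $(n+m)^\kappa$, since \eqref{kappa} says $\int_1^\infty \omega(y)(y+1)^{-1-\kappa}\,\mathrm{d}y=1$. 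Thus the main term of $S_1$ reproduces the inductive bound with coefficient $1$, while the $O((k+m)^{-2})$ correction to $p$ and the input $S_2$ --- both now multiplied by the inductive hypothesis on $|h(k,m)|$, which already carries $q^{-(m+1)/2}$ --- contribute the geometrically decaying extra factor $2^{-j\kappa}$, so that $B_{j+1}=B_j(1+A\,2^{-j\kappa})$ remains bounded. The Laplace route would only recover this if you kept the recursion discrete (no smoothing error) or if you already had an inductive bound on $|h|$ to feed back in --- at which point you are doing the paper's argument.
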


\begin{proof}
We show the result for all $m\ge 1$ and $n\le m 2^j$ by induction on $j$. If $n\le m$, then $f(n,m)=g(n,m)=1$. 
Assume that, for some $j\ge 0$, there is a constant $B_j\ge 1$, such that for all $m\ge 1$ and $n\le m 2^j$,
$$|f(n,m)  -g(n,m)|\le \frac{B_j (n+m)^\kappa}{m^{1+\kappa} q^{(m+1)/2}}.$$ 
Now let $m\ge 1$ and $m2^j < n \le m 2^{j+1}$.  
From Lemmas \ref{fe2} and \ref{ge2} we have 
\begin{align*}
|f(n,m) & -g(n,m)| \\ 
& \le \sum_{0\le k < \frac{n-m}{2}}\Bigl| g(k,m) \, p(n-k,k+m) - f(k,m) \, r(n-k,k+m)\Bigr| \\
 & \le S_1 + S_2,
\end{align*}
where, by Proposition \ref{pub},
\begin{align*}
S_1 & := \sum_{0\le  k < \frac{n-m}{2}}p(n-k,k+m) \Bigl| g(k,m) - f(k,m)\Bigr|\\
& \le  \sum_{0\le  k < \frac{n-m}{2}} \left( \frac{\omega\left(\frac{n-k}{k+m}\right)}{k+m} + \frac{O(1)}{(k+m)^2}\right) 
\frac{B_j (k+m)^\kappa}{m^{1+\kappa} q^{(m+1)/2}} \\
& \le \frac{B_j }{m^{1+\kappa} q^{(m+1)/2}}
\left( \int_0^\frac{n-m}{2}  \frac{\omega\left(\frac{n-t}{t+m}\right)}{(t+m)^{1-\kappa}} \, \mathrm{d}t 
+ O\left(\frac{1}{m^{1-\kappa}}\right)\right) \\
& \le  \frac{B_j (n+m)^\kappa}{m^{1+\kappa} q^{(m+1)/2}}
\left( \int_1^\infty  \omega\left(y\right) (y+1)^{-1-\kappa} \, \mathrm{d}y 
+ O\left(\frac{m^\kappa}{m (n+m)^{\kappa}}\right)\right) \\
& =  \frac{B_j (n+m)^\kappa}{m^{1+\kappa} q^{(m+1)/2}}
\left( 1+ O\left(\frac{1}{m 2^{j \kappa}}\right)\right).
\end{align*}
 By  Proposition \ref{rap},
\begin{align*}
S_2 & := \sum_{0\le k < \frac{n-m}{2}}f(k,m) \Bigl| p(n-k,k+m) - r(n-k,k+m)\Bigr|\\
& \ll  \sum_{0\le k < \frac{n-m}{2}} \frac{1 }{(k+m) q^{(k+m+1)/2}}  \ll \frac{1}{m q^{(m+1)/2}} \\
& \le  \frac{ (n+m)^\kappa}{m^{1+\kappa} q^{(m+1)/2}} \frac{1}{2^{j \kappa}}.
\end{align*}
Combining these estimates, we obtain
$$ |f(n,m)  -g(n,m)|\le  \frac{B_{j+1} (n+m)^\kappa}{m^{1+\kappa} q^{(m+1)/2}}  \qquad (m\ge 1, \  n\le m 2^{j+1}),
$$
where $B_{j+1}:=B_j(1+A 2^{-j \kappa})$ and $A$ is a suitable absolute constant.
\end{proof}

\begin{proposition}\label{etaqm}
Let $\tau = 1/(4+2\kappa) =0.205466...$, where $\kappa$ is given by \eqref{kappa}.
For $m\ge 1$, we have $$\eta_q(m) = 1 +  O\left(\frac{1}{m  q^{(m+1)\tau}}\right).$$
\end{proposition}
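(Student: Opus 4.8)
The plan is to combine three facts already established: the expansion \eqref{lastf} for $f(n,m)$; its permutation analogue \eqref{lastg}, in which the corresponding factor is identically $1$ by Theorem \ref{thm5}; and the comparison estimate of Proposition \ref{fandg} between $f(n,m)$ and $g(n,m)$. Subtracting \eqref{lastg} from \eqref{lastf} and inserting Proposition \ref{fandg} gives, for all integers $n\ge 0$ and all $m\ge 1$,
$$ \bigl(\eta_q(m)-1\bigr)\, d(n/m) \ll \frac{m}{(n+m)^2}+\frac{n^\kappa}{m^{1+\kappa}q^{(m+1)/2}}, $$
with an absolute implied constant. Crucially, $\eta_q(m)=\alpha(m)+\beta(m)$ depends only on $q$ and $m$, so we are free to choose $n$ as a function of $q$ and $m$ in order to make the right-hand side as small as possible.

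The next step is to divide through by $d(n/m)$. Since $d(u)\asymp (u+1)^{-1}$ for all $u\ge 0$ — this is precisely the bound $G(z)\asymp 1$ used in the proof of Theorem \ref{thm3}, and it also follows from \eqref{IDD3T1} together with the positivity and continuity of $d$ on $[0,1]$ — we have $d(n/m)\asymp m/(n+m)$, so
$$ \eta_q(m)-1 \ll \frac{1}{n+m}+\frac{(n+m)\,n^\kappa}{m^{2+\kappa}q^{(m+1)/2}}. $$
Restricting to $n\ge m$, so that $n+m\asymp n$, the right-hand side is $\ll n^{-1}+n^{1+\kappa}m^{-2-\kappa}q^{-(m+1)/2}$, a sum of a decreasing and an increasing function of $n$, which we balance. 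Taking $n=\lceil m\,q^{(m+1)\tau}\rceil$ with $\tau=1/(4+2\kappa)$ does exactly this: this $n$ exceeds $m$ (because $q\ge 2$, $m\ge 1$, $\tau>0$), and the identity $(1+\kappa)\tau-\tfrac12=-\tau$ — equivalent to $\tau=1/(2(2+\kappa))$ — shows that both terms are $\asymp m^{-1}q^{-(m+1)\tau}$. Hence $\eta_q(m)=1+O\bigl(m^{-1}q^{-(m+1)\tau}\bigr)$, as claimed.

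I do not expect a genuine obstacle here: the substantive analytic work lies in Proposition \ref{fandg} and in Theorems \ref{thm3} and \ref{thm5}, all of which may be assumed. The only points requiring care are bookkeeping ones: checking that every implied constant stays absolute (uniform in $q$), which holds because \eqref{lastf}, \eqref{lastg} and Proposition \ref{fandg} all carry absolute constants and the lower bound $d(n/m)\gg m/(n+m)$ involves only the fixed function $d$; and verifying the short exponent computation that singles out $\tau=1/(4+2\kappa)$ as the balancing value.
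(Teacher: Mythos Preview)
Your proposal is correct and follows essentially the same route as the paper: subtract the expansions \eqref{lastf} and \eqref{lastg} (using $\eta(m)=1$ from Theorem~\ref{thm5}), bound the difference via Proposition~\ref{fandg}, divide by $d(n/m)\asymp m/(n+m)$, and balance the two error terms by choosing $n\approx m\,q^{(m+1)\tau}$ with $\tau=1/(4+2\kappa)$. The only cosmetic differences are your use of $n+m$ in place of $n$ before restricting to $n\ge m$, and $\lceil\cdot\rceil$ versus $\lfloor\cdot\rfloor$ in the choice of $n$.
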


\begin{proof}
Assume $n\ge m$. In Proposition \ref{fandg}, replace $f(n,m)$ and $g(n,m)$ by their respective estimates \eqref{lastf} and \eqref{lastg}, with $\eta(m)=1$.
We obtain
$$ \eta_q(m) d(n/m) =  d(n/m) +O\left(\frac{m}{n^2}+ \frac{n^\kappa}{m^{1+\kappa} \, q^{(m+1)/2}}\right).$$
Dividing by $d(n/m) \asymp m/n$ gives
$$ \eta_q(m)=1+ O\left(\frac{1}{n}+ \frac{n^{1+\kappa}}{m^{2+\kappa} \, q^{(m+1)/2}}\right).$$
The result now follows with $n=\lfloor m q^{(m+1)/(4+2\kappa)} \rfloor = \lfloor m  q^{(m+1)\tau} \rfloor$.
\end{proof}

\end{document}